\numberwithin{equation}{section}
\theoremstyle{plain}
\newtheorem{theorem}{Theorem}[section]
\newtheorem{lemma}[theorem]{Lemma}
\newtheorem{corollary}{Theorem}[section]
\theoremstyle{definition}
\newtheorem{assumption}[theorem]{Assumption}
\newtheorem{remark}[theorem]{Remark}
\newcommand\bzero{\mbox{\boldmath${0}$}}
\newcommand\beg{\mbox{\boldmath${\eg}$}}
\newcommand\bC{{\bf C}}
\newcommand\bE{{\bf E}}
\newcommand\bF{{\bf F}}
\newcommand\bM{{\bf M}}
\newcommand\bu{{\bf u}}
\newcommand\bR{{\bf R}}
\newcommand\bx{{\bf x}}
\newcommand\bPhi{\mbox{\boldmath${\Phi}$}}
\newcommand{\R}{\mathbb{R}}
\newcommand{\PR}{\mathbb{P}}
\newcommand{\E}{\mathbb{E}}
\newcommand{\bfW}{\mathbf{W}}
\newcommand{\bfx}{\mathbf{x}}
\newcommand{\diag}{\operatorname{diag}}
\newcommand{\N}{\mathbb{N}}
\newcommand{\conp}{\stackrel{\mathbb{P}}{\to}}
\newcommand \dg{\delta}
\newcommand \eg{\varepsilon}
\newcommand \la{\lambda}
\newcommand \sg{\sigma}
\newcommand\lip{\left \langle}
\newcommand\rip{\right \rangle}
\def\convd{\stackrel{\mbox{$\scriptstyle d$}}{\rightarrow}}
\def\convP{\stackrel{\mbox{$\scriptstyle P$}}{\rightarrow}}
\newcommand\refeq[1]{{\rm (\ref{e:#1})}}
\newcommand{\nn}{\nonumber}
\begin{document}

\begin{frontmatter}
\title{Monitoring for a phase transition in a time series of Wigner matrices}
\runtitle{Monitoring for a phase transition in a time series of Wigner matrices}

\begin{aug}
\author[A]{\fnms{Nina}~\snm{D{\"o}rnemann}\ead[label=e1]{ndoernemann@math.au.dk}},
\author[B]{\fnms{Piotr}~\snm{Kokoszka}\ead[label=e2]{Piotr.Kokoszka@colostate.edu}},
\author[A]{\fnms{Tim}~\snm{Kutta}\ead[label=e3]{tim.kutta@math.au.dk}}
\and
\author[B]{\fnms{Sunmin}~\snm{Lee}\ead[label=e4]{adelia.lee@colostate.edu}}
\address[A]{Department of Mathematics, Aarhus University \printead[presep={,\ }]{e1,e3}}

\address[B]{Department of Statistics, Colorado State University \printead[presep={ ,\ }]{e2,e4}}
\end{aug}

\begin{abstract}
We develop methodology and
theory for the detection of a phase transition  in  a time-series of
high-dimensional random matrices.
In the model we study, at each time point \( t = 1,2,\ldots \),
we observe a deformed
Wigner matrix \( \mathbf{M}_t \), where the unobservable deformation
represents a latent signal. This signal is detectable only in the
supercritical regime, and our objective is to detect the transition to
this regime in real time,  as new matrix--valued observations arrive.
Our approach is based on a partial sum process of extremal
eigenvalues of $\mathbf{M}_t$, and its theoretical analysis
combines state-of-the-art tools from random-matrix-theory and
Gaussian approximations. The resulting detector is self-normalized,
which ensures appropriate scaling for convergence and a pivotal limit,
without any additional parameter estimation. Simulations show
excellent performance for varying dimensions.
Applications to pollution monitoring and social interactions in primates illustrate
the usefulness of our approach.
\end{abstract}

\begin{keyword}[class=MSC]
\kwd[Primary ]{62H15}
\kwd{62L10}
\kwd[; secondary ]{62M10}
\end{keyword}

\begin{keyword}
\kwd{change point analysis}
\kwd{high dimensional matrices}
\kwd{self-normalization}
\kwd{sequential test}
\kwd{time series}
\end{keyword}

\end{frontmatter}

\section{Introduction} \label{s:i}
We study the problem of monitoring a time series of deformed Wigner
matrices, also known as spiked Wigner matrices,
for the emergence of a detectable signal. Unlike
extensive  research focusing on detecting
a signal in a single matrix, we consider a sequence (stream) of such matrices,
i.e.
\begin{align} \label{eq_def_M}
	\mathbf{M}_t:= s _t \cdot \bfx_t\bfx_t^\top + \bfW_t/\sqrt{n},
	\qquad t=1,2, \ldots ~ .
\end{align}
Our goal is to identify in real-time the transition to
a regime where the signal  $s _t  \bfx_t\bfx_t^\top$
becomes detectable. Detailed background and problem formulation
are provided below.

Our approach is related  to research on temporal graphs or dynamic networks,
which has recently attracted increasing attention in telecommunications
and information systems, for example, in detecting network intrusions
or other anomalies; see, e.g.,
\cite{wang:localizing:2015,shin:2017,eswaran:2018,kiouche:2020,jiang:liu:2022}, 
among a variety of engineering and computer
science papers focusing on algorithms for specific applications.
In contrast to the analysis of static graphs or networks,
the analysis of streaming network data more closely reflects
both the structure of the data and the purpose of the analysis.
For example, an anomaly (a structural change) at a particular
time point and can be more effectively detected by comparing
the current network state to a previous anomaly-free state rather
than searching for an unusual pattern in a snapshot of the network.

\cite{ye:xu:wang:2023} also consider
model \eqref{eq_def_M},   but focus on a
different problem. In the initial period,  there is no signal,
$\bx_t = \bzero$,  a known
signal emerges at an unknown time $\tau$, and the objective is to sequentially
determine $\tau$ balancing the expected detection delay and the rate of
false alarms. In our setting,  the signal is unknown and the objective is
to detect  transition to a regime where it can be detected. The criteria
are large probability of correct phase transition identification, if it exists,
and small probability of false alarms, if there is no phase transition.
The problem we study is related to, but quite different from,
sequential detection of change points in dynamic networks.
\cite{yu:2024} study the detection of a change in the expected
value of the adjacency matrix of an inhomogeneous Bernoulli network.
Extending this work, \cite{xu:dubey:yu:2024} consider networks
with missing values and temporal dependence quantified by $\phi$-mixing,
as we do.  The statistical problem studied in these papers is related,
but different; Bernoulli or dot product graphs vs. deformed Wigner matrices; no training
vs. a training period, as we elaborate later.  The main difference, however, lies
in the statistics under investigation. The aforementioned methods
rely on change point statistics for individual entries that
are aggregated across the network. In contrast, we focus on the
leading eigenvalue, which is typically associated with the most
influential direction/node of a  network.
If the leading eigenvalue exceeds a critical threshold,
a phase transition occurs to a regime in which a signal becomes detectable.
To develop a suitable statistical
framework, we combine Gaussian approximations with
state-of-the-art random matrix theory.
A more detailed discussion  of related work is given in Section~\ref{sec_lit}.
We now outline relevant properties of Wigner matrices and our contributions.

{\textbf{Deformed Wigner matrices}\label{Wignerm}}
A Wigner matrix $\bfW \in \mathbb{R}^{n \times n}$ is a random matrix
whose entries $(w_{ij})_{1 \leq i,j \leq n}$ are independent,
up to the symmetry constraint $w_{ij}=w_{ji}$, and satisfy the conditions
\begin{align*}
	\E [w_{ij}] = 0, \quad \E[ w_{ij}^2] = \sigma^2 + \delta_{ij} c,
	\quad \max_{1\leq i,j \leq n} \E | w_{ij}|^p \leq c_p,
\end{align*}
for all integers $p\geq 3$, where $c, c_p $ denote some constants.
The $\dg_{ij}$ are Kronecker deltas.
Wigner matrices are a standard ensemble in random matrix theory and their spectral properties are well-understood; see our literature review in Section \ref{sec_lit}.
In our model, Wigner matrices represent the noise, while the signal is given by a (random) rank one matrix. More precisely,
denoting by $s\ge 0$ the signal strength and by $\bfx$ a vector of unit length (both possibly random), the signal is given by $s \cdot \bfx\bfx^\top$ and the deformed Wigner matrix by
\begin{align} \label{e:def_M}
	\mathbf{M}:= s \cdot \bfx\bfx^\top + \bfW/\sqrt{n}.
\end{align}
The scaling $1/\sqrt{n}$ is standard in random matrix theory,
making noise and signal in an appropriate sense proportional
to each other. It is well-known that in this model only signals
with sufficiently large magnitude $s$ are detectable.
To explain this phenomenon, let us momentarily assume
that $s$ is deterministic.  Then, if $s<\sigma$, the signal
is not detectable in the sense that the maximum eigenvalue
$\lambda=\lambda(\mathbf{M}) $ of the matrix $\mathbf{M}$
satisfies $\lambda \overset{P}{\to} 2 \sigma$ (no information on $s$),
while the largest eigenvector $\mathbf{u} = \mathbf{u} (\mathbf{M})$
is asymptotically orthogonal to the direction of the signal,
$\langle \mathbf{u}, \mathbf{x}\rangle \overset{P}{\to}0$
(no information on $\mathbf{x}$). For $s>\sigma$, however,
both the maximum eigenvalue and its corresponding eigenvector
contain information about the signal. More precisely,
they satisfy $\la \convP s + \sg^{-2}s$
and $\lip \bu, \bx \rip^2 \convP 1 - \sg^{2}s^{-2}$,
\cite{capitaine_et_al_2009,pizzo_et_al_2013,Lee2015}.
Hence, the largest eigenvalue of $\mathbf{M}$ in
\eqref{e:def_M} undergoes a phase transition,
and we refer to $s<\sigma$ as the \textit{subcritical} case,
while $s>\sigma$ is called the \textit{supercritical} case.
This phenomenon is a variant of the BBP-phase transition,
which refers to the  pivotal work of \cite{baik:ben:peche:2005}
on the eigenvalues
of a complex sample covariance matrix. For  model \eqref{e:def_M},
it is a crucial statistical task to determine whether a detectable signal
exists or not, see \cite{jung2024detection} for a recent comprehensive
review.

\textbf{Statistical model}\label{model} We consider the problem
of sequentially monitoring a weakly dependent time series of deformed Wigner matrices for the emergence of a detectable signal. Let $(\mathbf{W}_{n,t})_t$  be a triangular array of Wigner matrices in $\mathbb{R}^{n \times n}$, and $(s_t,\bx_{n,t})_t$ a triangular array of random vectors in $\mathbb{R}^{n+1}$.
We assume that both arrays are independent of each other,
and that for a fixed row (that is, for fixed $n$), the time series of Wigner matrices is strictly stationary in $t$. In particular, all Wigner matrices $\mathbf{W}_{n,t}$ are characterized by the same variance $\sigma^2$ and the same constants $c, c_p>0$.
In the following, we will usually drop dependence on $n$ in
both the vectors and the matrices to reduce the notational burden.

We are interested in monitoring the distribution of the $s_t$ in
\refeq{def_M} for a possible transition from the subcritical to the supercritical
regime. The signals
$s_t$ are not directly observable; we only have access to
the deformed Wigner matrices $\mathbf{M}_t$. Our task is to detect the transition of the  signal $s_t$ to the supercritical regime based solely on the  matrix-valued observations $\mathbf{M}_t$.

To formulate a statistical model for this task, let $\mathbb{P}^{(1)}$ and $\mathbb{P}^{(2)}$ be probability measures with support in $[0,\sigma)$ and  $(\sigma, \infty)$, respectively. The measures $\mathbb{P}^{(1)}$, $\mathbb{P}^{(2)}$ capture the randomness of $s_t$. If $s_t \sim \mathbb{P}^{(1)}$, then the model describes the subcritical regime, which is assumed to be the initial state.
In contrast, if $s_t$ is distributed according to $\PR^{(2)}$ for some $t$, then a transition to the supercritical regime has taken place.
To detect a possible transition from $\PR^{(1)}$ to $\PR^{(2)}$, we assume
that the initial $m$ observations $\mathbf{M}_1,\ldots ,\mathbf{M}_m$ correspond  to matrices in the subcritical case. That is, if  $\mathcal{L}(s)$ denotes the law of a random variable $s$, then we have $\mathcal{L}(s_1)=\mathcal{L}(s_2)=\ldots =\mathcal{L}(s_m) = \PR^{(1)}$.  We call $\mathbf{M}_1,\ldots ,\mathbf{M}_m$ the \textit{training sample}.
Thus, we assume that the signal is not detectable during the (anomaly free) training period. After the training, the \textit{monitoring period} begins, where we sequentially test the hypotheses pair
\begin{align}
	& \mathsf{H}_0: \PR^{(1)}=\mathcal{L}(s_1)=\mathcal{L}(s_2)= \ldots ,\\
	& \mathsf{H}_1: \PR^{(1)}=\mathcal{L}(s_1)= \ldots =\mathcal{L}(s_{m+k^*}) \neq   \PR^{(2)} =  \mathcal{L}(s_{m+k^*+1})=  \mathcal{L}(s_{m+k^*+2}) = \ldots . \label{e:alt}
\end{align}
The location $k^*$ of the phase transition is unknown, if it exists.
This means that under the null hypothesis, the signal is not detectable over the entire period, while under the alternative hypothesis, at some unknown time $k^*$ in the monitoring period, a detectable signal emerges. The distributions $\PR^{(1)}$, $\PR^{(2)}$, the unit vectors $\bx_{n,t}$ and the variances $\sg^2$ and $c$
are unknown. Asymptotics are formulated for $m \to \infty$ (longer training period) and $n \to \infty$ (high dimension).

\textbf{Main contributions} Our contributions can be summarized  as follows.

\begin{enumerate}[label=\arabic*)]
	\item We develop a statistical test for the emergence of a supercritical signal in a dependent time series of high-dimensional matrices. To the best of our knowledge,  no such  test has been proposed yet. Existing statistical methods in random matrix theory typically confine their analysis to the study of a single matrix,  and do not include  temporal evolution .
	\item We prove that the level of the  test is controlled asymptotically, and that it is power consistent for the alternative $\mathsf{H}_1$. The relation between length of the training period and dimension is kept very general  (they have to be polynomially bounded by each other), and we allow the observations to be weakly dependent over time.
	\item   Our theoretical analysis lies at the intersection of time series analysis and random matrix theory. We derive novel Gaussian approximations for the partial sum process of maximum eigenvalues corresponding to deformed Wigner matrices.
	The derivations of these approximations are challenging, since there do not exist, at present, moment bounds for our objects of interest, i.e. the largest eigenvalues of high-dimensional matrices.
	\item
	The proposed test statistic is based on the principle of self-normalization that rids our procedure from all nuisance parameters and implies convergence to a simple, pivotal limiting distribution. The proof of convergence is based on a new, finite sample normalization strategy, since (given current theory) it is unclear whether a theoretical long-run variance even exists. In this respect, our work broadens the scope of self-normalizations to non-standard scenarios.
\end{enumerate}

The remainder of the paper is organized as follows. Section \ref{sec_lit}
places our work in a broader perspective. Both sequential testing and random
matrix theory are well-established areas of research, so we can review only
the most closely related papers we know. The new statistical methodology
and the accompanying theory are presented in Section \ref{s:mt}.
An assessment of finite sample behavior of our monitoring method is given
in Section \ref{s:fs}. The Proofs are gathered in Section \ref{s:p}.

\section{Related research} \label{sec_lit}
In this section,  we review related research on sequential change detection  and
random matrix theory. Our objective is to place  our work within broader
context, rather than provide an exhaustive review.

\textbf{Monitoring} For a recent and fairly comprehensive account of
the change point problem, we refer to \cite{HRbook}.
\cite{aue:kirch:2024} provide a review of change point monitoring
with an initial training sample (a setup  used in this work). It is a form
of sequential change point detection
where in-control parameters are not specified (e.g. by a production process
requirements), but are estimated from an initial training sample
whose length is often denoted by $m$.
This approach was originally proposed in the seminal paper
of \cite{chu:stinchcombe:white:1996} and is often
referred to as {\em monitoring}.  It differs in important aspects from
more traditional sequential  approaches, see e.g. \cite{siegmund:1985,lai:1995},  which we do not study in this paper.
For comparisons between the two approaches, see e.g.
\cite{wang:li:padilla:yu:rinalo:2023}. Since its conception,
the methodology of  change point monitoring  has significantly evolved. In
\cite{horvath:kokoszka:huskova:steinebach:2003}, a parametric
family of CUSUM statistics was introduced, where a weight parameter
can be adjusted if changes are believed to occur earlier or later in
the monitoring period. Extensions of this approach to other time series
models were studied in \cite{berkes:gombay:horvath:kokoszka:2004,aue:horvath:huskova:kokoszka:2006}. 
\cite{fremdt:2015}
proposed a new approach for the more accurate estimation of parameters
after a change, resulting in increased power for later changes. This method
was refined by \cite{gosmann:kley:dette:2021} who proposed a
statistic motivated by a pointwise maximum likelihood principle. \cite{kutta:doernemann:2025} suggested to weight recent observations in this statistic more heavily to achieve shorter detection delays. These
approaches are based on  ``multiscale statistics" that compare sums of
different lengths to detect changes. A computationally advantageous
procedure is introduced in \cite{kirch:weber:2018}.
The above papers focus on time series in  a  Euclidean space.
\cite{he:et:al:2024} study matrix-valued time series of the
form $\bM_t = \bR \bF_t \bC^\top + \bE_t$ with the objective of monitoring
for a change in the row loadings matrix $\bR$.
There are monitoring schemes for functional data.
In \cite{aue:hormann:horvath:huskova:2014}, projections on
a low-dimensional vector space are used to monitor functional
linear regression models. In a recent work by
\cite{kutta:jach:kokoszka:2024}, panels of
functional data are monitored for mean changes. For high-dimensional
data only few works exist and they typically deviate from the problem
formulation by \cite{chu:stinchcombe:white:1996},  and are hence
not directly comparable. The closest work is
\cite{gosmann:stoehr:heiny:dette:2022},
where mean changes are detected in high dimensional vectors.
Alternative notions of sequential monitoring have been explored
and we refer to  \cite{wu:wang:yan:shao:2022,chen:wang:samworth:2022} for some recent examples.

Exiting approaches (without a training period) typically aggregate change point detection statistics
across the entries of random matrices and analyze them using  tools
from the statistical theory of high-dimensional vectors. Our work differs
importantly from this pattern. To the best of our knowledge it is the first
monitoring approach
based on  fluctuations of  the leading eigenvalue. The developed theory
also relies on an entirely different foundation than previous works,
building on recently developed tools in random matrix theory to analyze
convergence and concentration of extremal eigenvalues. A further
difference is the use of an anomaly-free training period for our
algorithms that we combine with a self-normalization to obtain
a completely pivotal procedure. This contrasts with most existing
procedures that involve the (often challenging) choice of regularization
parameters.
Up to this point, self-normalization has been used primarily
in {\it a posteriori} change point detection, see e.g.
\cite{zhang:lavitas:2018}, with a recent exception
in \cite{chan:ng:yau:2021}. Self-normalization leads to
robust inference methods that avoid the difficult estimation
of the long-run variance. Proofs typically rely on weak convergence
of the partial sum process, combined with a continuous mapping theorem,
where the long-run variance cancels out in a carefully constructed limit.
Our approach is different because we cannot even be sure of the existence
of the (theoretical) long-run variance. For this reason, we propose a new
proof strategy  that leverages finite sample Gaussian approximations to
cancel out a finite sample variance, extending the scope of self-normalization
to settings where the long-run variance may not exist.

\textbf{Random matrix theory}
The noise in our model is a sequence of  Wigner matrices; the Wigner matrix is a  fundamental object in random matrix theory. This paper draws on the well-developed literature on spectral behavior of Wigner matrices and we review some of it  here.
For more in-depth studies, we refer  to \cite{erdos:yau:yin:2012} on the concentration of the bulk eigenvalues, \cite{erdos:peche:ramirez:2010} on bulk universality, or  \cite{knowles:yin:2013} on the eigenvector distribution. Broader surveys can be found in \cite{erdHos2011universality} or \cite{arous:guionnet:2011}.

Deformed Wigner matrices are a generalization of classical Wigner matrices, where the random matrix is deformed by introducing a deterministic component or altering the distribution of entries. A common example is the additive deformation, where a  symmetric matrix  is added to the Wigner matrix. Typically, the deformation  of the Wigner matrix is assumed to be of finite (fixed) rank or even of rank one in case of a hidden signal. It is well-known that the leading eigenvalues of deformed Wigner matrices undergo a phase transition under which their asymptotic behaviors fundamentally changes (see also our introduction). This phenomenon is known as the  BBP-phase transition due to a pivotal work of \cite{baik:ben:peche:2005}. It has also  been studied in
\cite{feral:peche:2007,benaych-georges:nadakuditi:2009,pizzo_et_al_2013,Lee2015,Knowles2017}, among many others. It is known that the behavior of the extremal eigenvalues of many random matrix ensembles is universal in the sense that the limiting distribution does not depend on the distribution of the entries of the underlying random matrix.
For deformed Wigner matrices, however, the behavior of the leading eigenvalues in the supercritical regime is not universal, see  e.g. \cite{capitaine_et_al_2009,knowles:yin:2014}.

To the best of our knowledge, the problem of detecting the emergence of a signal in a time series of random matrices has not been addressed yet. While our method is placed in a sequential framework, where the user sequentially observes a stream of deformed Wigner matrices, existing methods confine their analysis to the study of a single random matrix \cite{ding2024two, dornemann2024detecting}.
We are, however, not even aware of a significance test for the existence of
a detectable signal even in an individual deformed Wigner matrix.
In particular, we emphasize that the problem of detecting spiked eigenvalues, which has been extensively studied for covariance matrices, see, e.g., \cite{ding2022tracy,onatski_et_al_2014,johnstone_onatski_2020}, and recently for deformed Wigner matrices as in \cite{el_alaoui_et_al,chung2019weak,jung2024detection}, is inherently different from our approach. In those papers, the authors are interested in testing for the number of spiked eigenvalues of any size, no matter whether they fall into the subcritical or supercritical regime. However, not every spike corresponds to a supercritical (detectable) signal in the sense of this paper.
A test for the distinct problem of determining the asymptotic regime of the leading eigenvalue of a (single) covariance matrix was proposed by \cite{doernemann:lopes:2025}.

\section{Detecting the transition to the supercritical regime}\label{s:mt}

\subsection{Notation and mathematical preliminaries}
\label{sec_not_prelim}
$ $\\
\textbf{Convergence of stochastic processes} We consider for $T>0$ the space of
bounded functions
\[
\ell^\infty([0,T]) := \{f:[0,T] \to \mathbb{R}, \|f\|_\infty<\infty\},
\]
where $\|f\|_\infty = \sup_{x\in [0,T]} |f(x)|$.
The  space $(\ell^\infty([0,T]), \|\cdot\|_\infty)$ supports all stochastic processes of interest in this paper, but has the drawback of being non-separable. Hence, we frequently consider the separable subspace of continuous functions
\[
\mathcal{C}([0,T]) := \{ f:[0,T] \to \mathbb{R} \,\,\,\textnormal{is continuous}\}.
\]
To quantify the distance between stochastic processes, we introduce the
Prokhorov metric $\pi$, defined for two stochastic processes
$X,Y \in \ell^\infty([0,T])$ as follows
\begin{align}\label{eq_def_prokhorov_metric}
	\pi(X,Y) = \inf\{\varepsilon>0: \ & \mathbb{P}(X \in A) \le  \mathbb{P}(Y \in A_\varepsilon)+\varepsilon,  \\
	& \mathbb{P}(Y \in A) \le  \mathbb{P}(X \in A_\varepsilon)+\varepsilon,\,\, \forall A \,\, \textnormal{measurable}\}.\nn
\end{align}
Here, $A_\varepsilon$ is the open $\varepsilon$ environment of
a measurable set $A \subset \ell^\infty([0,T])$.
Since $\mathcal{C}([0,T])$ is a subspace of $\ell^\infty([0,T])$, $\pi$
in particular induces a distance between the laws of continuous processes
and since $\mathcal{C}([0,T])$ is separable, it even metrizes
weak convergence (e.g., \cite{huber:ronchetti:2009}, Chapter 2).

\textbf{\(\phi\)-mixing time series} Let \((X_{t,n})_{t \in \mathbb{Z}}\) be for each $n \in \N$ a stationary time series of random variables taking values in a metric space \((\mathcal{X}, d)\). To quantify dependence along $t$, we introduce the \(\phi\)-mixing coefficients, defined for all \(r \geq 0\) as
\[
\phi(r) := \sup_n \sup \Big\{|\mathbb{P}(A | B) - \mathbb{P}(A)|:A \in \mathcal{F}_{-\infty}^{0}(n),\,\, B \in \mathcal{F}_{r}^{\infty}(n),\,\, \mathbb{P}(B)>0 \Big\},
\]
where \(\mathcal{F}_{-\infty}^{0}(n)\) is the \(\sigma\)-algebra generated
by \((X_{t,n})_{t \leq 0}\) and \(\mathcal{F}_{r}^{\infty}(n)\)
is the \(\sigma\)-algebra generated by \((X_{t,n})_{t \geq r}\).
The triangular array \((X_{t,n})_{t \in \mathbb{Z}}\) is called \(\phi\)-mixing
if \(\phi(r) \to 0\), as $r \to \infty$, and the decay rate quantifies the strength of dependence.

\textbf{The Tracy-Widom distribution} The Tracy-Widom distribution arises in the study of the extremal eigenvalues of random matrices and hence plays a key role in describing the transition from subcritical to supercritical regimes. Since this distribution cannot be described in a standard closed form, it is often characterized as a limiting object. For instance, it describes the limiting distribution of the largest eigenvalue of a random matrix drawn from the Gaussian Orthogonal Ensemble  (GOE) as the matrix size tends to infinity (for a detailed description, see \cite{Tracy2000}).
Formally, let $ \lambda_{\max} $ be the largest eigenvalue of a GOE matrix of size \(n \times n\). Then, under appropriate scaling, the distribution of the centered and normalized $\lambda_{\max} $ converges weakly to the Tracy-Widom distribution:
\begin{equation} \label{e:TW}
	\mathbb{P}\left( \frac{\lambda_{\text{max}} - 2\sqrt{n}}{n^{-1/6}} \leq x \right) \to F_{TW}(x),  \quad \text{as } \, n \to \infty \quad \text{for } x\in\R.
\end{equation}
Here \(F_{TW}\) denotes the cumulative distribution function of the Tracy-Widom distribution. For details, we refer to
Tracy and Widom \cite{tracy1994level,tracy1996orthogonal}.

\textbf{Additional notation} We will often compare sequences $(a_t)_t, (b_t)_t$ of positive numbers with each other. If there exists a positive constant $C>0$ such that $a_t \le C b_t$ for all $t$, we write $a_t \lesssim b_t$. If the sequences satisfy $a_t \lesssim b_t \lesssim a_t$, we  write $ a_t \asymp b_t$. Importantly, when using these notations in our below asymptotics, the implied constants are always independent of $n$ and $m$ (the objects of our asymptotics).

\subsection{Statistical methodology}\label{ss:sm}

\textbf{A first change point detector} In order to test the hypotheses pair $\mathsf{H}_0 - \mathsf{H}_1$, we construct a change point detector. As a first step, we define the largest eigenvalue of the deformed Wigner matrix $\mathbf{M}_t$ at time $t$ by $\lambda_t := \lambda(\mathbf{M}_t)$. Under the null hypothesis of no detectable signal, all eigenvalues $\lambda_t$ are concentrated around $ 2 \sigma$
because for a fixed $t$, $\lambda(\mathbf{M}_t)\convP 2\sg$, as $n\to \infty$,
as explained in the introduction.
Indeed, a more precise analysis shows that
\begin{align*}
	n^{2/3}(\lambda_t-2\sigma)
	\convd \zeta, \quad n\to\infty,
\end{align*}
where $\zeta$ has the Tracy-Widom distribution defined in \refeq{TW} (see Lemma \ref{lem_rmt}).
Notice that the Tracy-Widom distribution does not contain any  parameters depending on the distribution of $\bfW_t$ or $s_t$ - it is universal. If, however, the null hypothesis is false and $s_t \sim \PR^{(2)}$, the sample eigenvalue $\lambda_t$ becomes detached from the bulk, and it follows that
\begin{align*}
	n^{2/3}(\lambda_t-2\sigma)
	\overset{P}{\to} \infty.
\end{align*}
In the light of these convergence behaviors, the statistic $n^{2/3}(\lambda_t-2\sigma) $ could be used to test for the presence of a signal at time $t$ -- supposing that the variance $\sigma$ were known. We are interested in monitoring over many time points $t$,  and hence propose to aggregate all statistics $n^{2/3}(\lambda_t-2\sigma) $ available at some time $k$ in the monitoring period as
\begin{align*}
	\tilde \Gamma_{n,m}(k):=\frac{\sqrt{m}}{m+k}
	\bigg(\sum_{t=m+1}^{m+k}n^{2/3}(\lambda_t-2\sigma)- \frac{k}{m}\sum_{t=1}^mn^{2/3}(\lambda_t-2\sigma)\bigg).
\end{align*}
This aggregation method is motivated by the change point detector presented in \cite{horvath:kokoszka:huskova:steinebach:2003}
for multivariate time series. Intuitively, under the null hypothesis,
\begin{align} \label{e:well}
	\tilde \Gamma_{n,m}(k) \approx  \frac{\sqrt{m}}{m+k}
	\bigg(\sum_{t=m+1}^{m+k} \zeta_t- \frac{k}{m}\sum_{t=1}^m \zeta_t\bigg),
\end{align}
for a sequence of Tracy-Widom distributions $(\zeta_t)_t$,
and the right side converges weakly for $m \to \infty$ under
some mild dependence assumptions. Conversely, under $\mathsf{H}_1$,
it is easy to see that for $k>k^*$,
\[
\tilde \Gamma_{n,m}(k) \overset{P}{\to} \infty,
\]
as the first sum in the definition of $\tilde \Gamma_{n,m}(k)$ blows up. These properties make $\tilde \Gamma_{n,m}(k)$ a potential candidate for a change point detector. Indeed $\tilde \Gamma_{n,m}(k) $ is even practically calculable, since by construction the unknown parameters $\sigma$ cancel out and thus
\begin{align*}
	\tilde \Gamma_{n,m}(k)=\frac{n^{2/3}\sqrt{m}}{m+k}
	\bigg(\sum_{t=m+1}^{m+k}\lambda_t-\frac{k}{m}\sum_{t=1}^m\lambda_t\bigg).
\end{align*}
However, both from a practical and a theoretical perspective,
$\tilde \Gamma_{n,m}(k)$ cannot be used directly for a statistical test.
From a practical perspective, its limiting distribution is still governed by
the unknown time dependence of the sequence $(\mathbf{W}_t, \bfx_t)_t$. This introduces a difficult to estimate long-run variance parameter into the problem.
From a theoretical perspective, the analysis is challenging
because the approximation of  $\tilde \Gamma_{n,m}$ by its well-behaved limit in
\eqref{e:well} is unclear,
and we can only show it for $n \to \infty$ with $m,k$ fixed.
Indeed, given current theory, it is not even known whether the moments
of $n^{2/3}(\lambda_t-2\sigma) $ converge to those of the
Tracy-Widom distribution, making an asymptotic analysis where simultaneously $n, m \to \infty$ difficult. Fortunately, both problems can be addressed by the same  tool: self-normalization.

\textbf{A self-normalized detector}
In the context of time series analysis, self-normalization refers to the division of a statistic of interest (in our case the detector $\tilde \Gamma_{n,m}$) by a "normalizer" that cancels out the long-run variance and hence makes the normalized statistic asymptotically pivotal. It is important to emphasize that self-normalization is not equivalent to long-run variance estimation, since the normalizer does not converge to a fixed number, but to a non-degenerate random variable that is proportional to the long-run variance. Recently, self-normalization has been used to cancel out the long-run variance in sequential testing by \cite{chan:ng:yau:2021}.
Our application of the principle of self-normalization is novel because
we cannot even prove the existence of a long-run variance as $n,m \to \infty$. What we can show, however,
is that the self-normalization cancels out the variances in finite samples, that always exist,
and scales the process automatically so that it can converge.
Such approximation properties in finite samples have not been studied
for self-normalized statistics before, but are known for
bootstrap, see, e.g., \cite{chen:kato:2020}.
For reviews of  self-normalization, we refer  to \cite{shao:2010,shao:2015}. We now define a normalizer, drawing on the initial training sample as follows
\begin{align} \label{e:def:Vm}
	V_m:= \frac{n^{2/3}}{m^{3/2}}\sum_{t=1}^m \bigg|\sum_{s=1}^t \lambda_s - \frac{t}{m}\sum_{s=1}^m \lambda_s \bigg|,
\end{align}
and therewith define the self-normalized detector
\begin{align} \label{e:def:gam}
	\Gamma_{n,m}(k):= \frac{D_m(k)}{V_m}, \qquad \textnormal{with} \quad D_m(k):=\frac{n^{2/3}\sqrt{m}}{ \,\,m+k}
	\bigg(\sum_{t=m+1}^{m+k}\lambda_t-\frac{k}{m}\sum_{t=1}^m\lambda_t\bigg).
\end{align}

\textbf{Test statistic} We are now in a position to formulate a statistical test
for the hypothesis pair $\mathsf{H}_0-\mathsf{H}_1$. We show  that under $\mathsf{H}_0$ and adequate technical assumptions (formulated in the next section),
\begin{align} \label{e:weakL}
	\sup_{1 \le k < \infty} \Gamma_{n,m}(k)
	\overset{d}{\to}\sup_{0 \le x < \infty} \frac{[B(1+x)-B(1)]-xB(1)}{(1+x)
		\cdot \int_0^1 |B(s)-s B(1)|ds},
	\quad m,n\to\infty,
\end{align}
where $B$ is the standard Brownian motion. Consequently, denoting by $q_{1-\alpha}$ the upper $\alpha$-quantile of the limiting distribution in \eqref{e:weakL}, we reject the hypothesis $\mathsf{H}_0$, if for some  $k\ge 1$,
\begin{align}\label{eq_test}
	\Gamma_{n,m}(k) >q_{1-\alpha}.
\end{align}
Denoting by $\hat{k}$ the smallest $k$ for which \eqref{eq_test} holds,
if $\mathsf{H}_0$ is rejected, we declare the emergence of a detectable
signal  $s_t\bx\bx^\top$ at time $m+ \hat{k}$.

\subsection{Theoretical analysis}
This section leads to Theorem \ref{theomain} that states
assumptions under which  convergence  \eqref{e:weakL}
holds. In a first step, we study the central building blocks of our statistic
- the maximum eigenvalues $\lambda_t$.
We leverage results from random matrix theory to
derive eigenvalue concentration which we use in our
proofs to truncate the eigenvalues. In the second step, we study
the partial sum process of the maximum eigenvalues
(the process $P_m$ defined in \eqref{e:def:Pm} below).
A central result is the Gaussian approximation in Theorem \ref{lem_3},
where we demonstrate that $P_m$ can be coupled to a Brownian motion.

\textbf{Concentration of eigenvalues}
As noted in the Introduction,
the largest eigenvalue of a deformed Wigner matrix undergoes a
phase transition if a signal becomes detectable.
To describe the results relevant to our monitoring problem,
we need the following assumption on the triple
$(\mathbf{W}, \bfx, s)$ in \refeq{def_M}.

\begin{assumption}\label{ass_wigner}
	Suppose that $\bfW$ is a Wigner matrix independent of $(s,\bfx)$
	and that $\bfx$ is an $n$-dimensional random vector of length $1$.
\end{assumption}

We now state two Lemmas, proved in Section \ref{ss:proof_rmt},
that help understand our approach.
The first lemma demonstrates that in the subcritical regime,
the fluctuations of the largest eigenvalue $\lambda$ of $\mathbf{M}$
around $2\sigma$ are of magnitude $n^{-2/3}$ and of Tracy-Widom type.
Moreover, it implies eigenvalue rigidity, i.e. the high concentration of the largest eigenvalue around its limit $2\sigma$.

\begin{lemma} \label{lem_rmt}
	Suppose that the triple $(\mathbf{W}, \bfx, s)$ satisfies Assumption \ref{ass_wigner} and that $s \sim \PR^{(1)}$.
	\begin{enumerate}[label=(\alph*)]
		\item \label{lem_rigidity} (Eigenvalue rigidity)
		For every $\varepsilon>0$ (sufficiently small) and $D>0$ (sufficiently large), there exists an integer $N=N(\varepsilon, D)$ such that
		\begin{align*}
			\PR ( | \lambda - 2 \sigma | > n^{\varepsilon-2/3} ) \leq n^{-D}
			\quad \forall \ n\geq N.
		\end{align*}
		\item \label{lem_tw} (Fluctuations in the subcritical case)
		It holds that
		\begin{align*}
			n^{2/3} ( \lambda - 2 \sigma ) \convd \zeta, \quad n\to\infty,
		\end{align*}
		where $\zeta$ has the  Tracy-Widom distribution, defined in \refeq{TW}.
	\end{enumerate}
\end{lemma}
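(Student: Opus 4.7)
The plan is to reduce both statements to established results on rigidity and edge universality for deformed Wigner matrices in the subcritical regime. Since $\bfW$ is independent of $(s,\bfx)$ by Assumption~\ref{ass_wigner}, I will condition on $(s,\bfx)$, treat $\mathbf{M} = \bfW/\sqrt{n} + s\bfx\bfx^\top$ as a rank-one additive perturbation of the Wigner matrix $\bfW/\sqrt{n}$ with (conditionally) deterministic spike of strength $s$ in direction $\bfx$, and then invoke the quoted results uniformly before integrating out. The critical observation is that since the support of $\PR^{(1)}$ lies in $[0,\sigma)$, we may fix a deterministic constant $\delta>0$ such that $s\leq \sigma-\delta$ almost surely, so that the random spike is uniformly subcritical.

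\textbf{Part (a), rigidity.} Conditionally on $(s,\bfx)$, $\mathbf{M}$ is a deformed Wigner matrix with a rank-one subcritical spike. The rigidity estimate for the rightmost eigenvalue of such matrices is standard: for any $\varepsilon>0,\,D>0$ there exists $N_0$ such that, for all $n\geq N_0$,
\begin{align*}
\PR\bigl(\,|\lambda-2\sigma|>n^{\varepsilon-2/3}\bigm| s,\bfx\bigr) \leq n^{-D}
\end{align*}
on the event $\{s\leq \sigma-\delta\}$, with a bound that is uniform over unit vectors $\bfx$ and over $s\in[0,\sigma-\delta]$; this is a direct consequence of the edge rigidity results for finite-rank deformations of Wigner matrices (Knowles--Yin, Capitaine et al.). Integrating out $(s,\bfx)$ gives the unconditional bound, since the exceptional event $\{s>\sigma-\delta\}$ has probability zero by the support condition.

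\textbf{Part (b), Tracy--Widom fluctuations.} Again conditioning on $(s,\bfx)$, I will apply the edge universality result for subcritical deformed Wigner matrices (e.g.\ Capitaine--Donati-Martin--Féral, Lee--Schnelli), which gives
\begin{align*}
\PR\bigl(n^{2/3}(\lambda-2\sigma)\leq x \bigm| s,\bfx\bigr)\longrightarrow F_{TW}(x)
\end{align*}
for every continuity point $x$ of $F_{TW}$, with the convergence holding uniformly in $\bfx$ and locally uniformly in $s\in[0,\sigma-\delta]$ (a compactness argument on $[0,\sigma-\delta]$ combined with continuity of the limit upgrades pointwise convergence to uniform convergence). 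Taking expectation with respect to $(s,\bfx)$ and using bounded convergence transfers the conditional weak limit to the unconditional one.

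\textbf{Main obstacle.} The principal subtlety is the handling of the joint randomness in $(s,\bfx)$: the cited rigidity and Tracy--Widom results are formulated for a fixed deterministic spike, so I need uniformity of the error bounds over all unit vectors $\bfx$ and over $s$ in a compact subset of $[0,\sigma)$. Uniformity in $\bfx$ is essentially automatic (the statements depend on $\bfx$ only through $\|\bfx\|=1$ by orthogonal invariance of the Wigner ensemble in the free spectral sense, or via perturbation bounds that are dimension-free). Uniformity in $s$ requires that $s$ stays bounded away from the critical value $\sigma$, which I extract from the assumption $\mathrm{supp}(\PR^{(1)})\subset[0,\sigma)$ by choosing $\delta>0$ with $\PR^{(1)}([0,\sigma-\delta])=1$; if the support condition is interpreted only as $s<\sigma$ almost surely without a positive gap, one would need a more delicate argument conditioning on $\{s\leq \sigma-\delta_n\}$ with $\delta_n\to 0$ slowly enough that the edge asymptotics remain valid.
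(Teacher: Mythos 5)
Your proposal is correct and follows essentially the same route as the paper: both reduce the claim to known rigidity and edge-universality results for subcritical rank-one deformations of Wigner matrices, use the fact that the support of $\PR^{(1)}$ is contained in a compact subset of $[0,\sigma)$ to keep the spike uniformly away from criticality, and then handle the randomness of $(s,\bfx)$ by conditioning/integration. The only difference is one of detail: the paper explicitly diagonalizes the perturbation, verifies Assumptions 2.1--2.2 of Lee--Schnelli for $\mathbf{V}=\diag(s/\sigma,0,\dots,0)$, and invokes the Knowles--Yin universality theorems to pass to general rank-one deformations, whereas you treat the required uniform versions of these results as standard without checking the hypotheses of a specific theorem.
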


Next, we turn to the analysis in the supercritical case, which corresponds
to the alternative hypothesis.
In this regime, the largest eigenvalue $\lambda_1$ separates from the bulk.
The following result is a slight generalization of Theorem 1.1 in
\cite{pizzo_et_al_2013}, who consider the case where $s$ is deterministic.

\begin{lemma} \label{lem:pizzo}(Delocalization of eigenvalues in the supercritical case)
	Suppose that Assumption \ref{ass_wigner} holds and that $s \sim \mathbb{P}^{(2)}$. Then,
	\begin{align*}
		\lambda - \Big(s + \frac{\sigma^2}{s} \Big)\conp 0, \quad n\to\infty.
	\end{align*}
\end{lemma}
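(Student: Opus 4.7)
The plan is to reduce the statement to the deterministic-spike result of Theorem~1.1 in \cite{pizzo_et_al_2013} by conditioning on the random pair $(s, \mathbf{x})$. By Assumption~\ref{ass_wigner}, $\mathbf{W}$ is independent of $(s, \mathbf{x})$, so the conditional law of $\mathbf{M}$ given $\{s = \bar s,\, \mathbf{x} = \bar{\mathbf{x}}\}$ coincides with the law of $\bar s\, \bar{\mathbf{x}} \bar{\mathbf{x}}^\top + \mathbf{W}/\sqrt{n}$, a deformed Wigner matrix carrying a fixed rank-one perturbation of strength $\bar s$ in the fixed direction $\bar{\mathbf{x}}$. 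Since $\mathbb{P}^{(2)}$ is supported on $(\sigma, \infty)$, $\mathbb{P}^{(2)}$-almost every realization $\bar s$ lies in the supercritical regime in which that theorem applies.

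Applied conditionally, the cited result will give, for each $\varepsilon > 0$,
\begin{equation*}
	\mathbb{P}\bigl(\bigl|\lambda - (\bar s + \sigma^2/\bar s)\bigr| > \varepsilon \,\bigm|\, s = \bar s,\, \mathbf{x} = \bar{\mathbf{x}}\bigr) \longrightarrow 0, \qquad n \to \infty,
\end{equation*}
for $\mathbb{P}^{(2)}$-a.e.\ $\bar s$ and every deterministic sequence of unit vectors $\bar{\mathbf{x}} = \bar{\mathbf{x}}_n \in \R^n$. To obtain the unconditional claim, we then write
\begin{equation*}
	\mathbb{P}\bigl(|\lambda - (s + \sigma^2/s)| > \varepsilon\bigr) = \mathbb{E}\bigl[\mathbb{P}\bigl(|\lambda - (s + \sigma^2/s)| > \varepsilon \,\bigm|\, s, \mathbf{x}\bigr)\bigr]
\end{equation*}
and appeal to dominated convergence: the integrand tends to $0$ almost surely by the preceding display and is uniformly bounded by $1$.

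The principal obstacle we anticipate is verifying that Theorem~1.1 of \cite{pizzo_et_al_2013}, which is often phrased for a canonical spike direction, is insensitive to the specific deterministic unit vector $\bar{\mathbf{x}}_n$ produced by the conditioning. This should follow because the argument there uses only the signal strength together with universal Wigner moment bounds, both of which are preserved by the conditioning on $(s, \mathbf{x})$; in particular, no orthogonal invariance of $\mathbf{W}$ is required. A secondary point is that the conditional convergence need not be uniform in $\bar s$ or $\bar{\mathbf{x}}$ for our argument, since pointwise almost-sure convergence together with the deterministic bound $1$ on the conditional probabilities already suffices for dominated convergence.
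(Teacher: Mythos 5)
Your overall strategy --- condition on $(s,\mathbf{x})$, invoke Theorem~1.1 of \cite{pizzo_et_al_2013} for a deterministic spike, then integrate out the conditioning by dominated convergence --- is exactly the paper's. However, there is a genuine gap in the step where you assert that ``the conditional convergence need not be uniform in $\bar s$ or $\bar{\mathbf{x}}$'' and that pointwise a.s.\ convergence plus the bound $1$ suffices for dominated convergence. The problem is that the conditioning variable $\mathbf{x}=\mathbf{x}_n$ lives in $\mathbb{R}^n$, so its dimension and its law change with $n$: the conditional probabilities $g_n(\bar s,\bar{\mathbf{x}}_n)$ do not form a sequence of functions on a fixed measure space, and ``the integrand tends to $0$ almost surely'' has no meaning without a coupling of the $\mathbf{x}_n$ across $n$ (none is assumed). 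Without uniform control in $\bar{\mathbf{x}}$, nothing prevents the law of $\mathbf{x}_n$ from concentrating, for each $n$, on precisely those unit vectors for which the conditional probability is large, so dominated convergence cannot be applied directly.

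The fix is the step the paper actually carries out and that you dismiss: upgrade the ``for every deterministic sequence $(\mathbf{x}^{(n)})_n$'' form of the cited theorem to uniformity over the unit sphere, i.e.\ $\sup_{\mathbf{x}^{(n)}\in S^{(n)}}\mathbb{P}(|r_n|>c)\to 0$. This follows by contradiction: if the $\limsup$ of the suprema were some $\eta>0$, one could select a near-maximizing sequence $(\mathbf{x}^{(n,\star)})_n$ and contradict the pointwise statement for that sequence. After taking the supremum over $\mathbf{x}$, the integrand $f_n(s)$ depends only on $s$, whose law $\mathbb{P}^{(2)}$ is fixed in $n$, and only then does dominated convergence (pointwise in $s'$, dominated by $1$) legitimately yield $\mathbb{E}[f_n(s)]\to 0$. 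So uniformity in $\bar s$ is indeed unnecessary, but uniformity in $\bar{\mathbf{x}}$ is essential and is the one non-trivial ingredient your write-up omits.
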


\textbf{Eigenvalue truncation} A key technique in our proofs is
the truncation of the eigenvalues to a region close to $2\sigma$.
More precisely, we define for a constant $\varepsilon>0$ the  event
\begin{align} \label{eq_def_Lambda_t}
	\Lambda_t := \{| \lambda_t-2\sigma|<n^{\varepsilon-2/3}\}.
\end{align}
In view of Lemma \ref{lem_rmt},  $\Lambda_t$  occurs  with high probability
for any $t$ in the subcritical regime. The theoretical constant $\varepsilon>0$
needs to be chosen appropriately in the proofs of our  convergence results.
We emphasize that while our theory involves the constant  $\varepsilon$, it does not have to be chosen in practice because the self-normalization asymptotically eliminates all parameters depending on $\varepsilon$.
Next, we specify the relation of the training period length $m$ and the dimension $n$ of the Wigner matrix.

\begin{assumption} \label{ass_2}
	The dimension   $n=n_m$ is a function of the training period length $m$
	and there is  a constant $\theta \in (0,\infty)$  such that $m \asymp n^\theta$.
\end{assumption}

\textbf{Gaussian approximation}
At the  core of both sequential change point detection and self-normalization
are invariance principles for the partial sum process.
Hence, we define the partial sum process of the eigenvalues
$\lambda_1, \lambda_2, \ldots $ as
\begin{align} \label{e:def:Pm}
	P_m(x):=\frac{1}{\sqrt{m \tau_n}}\sum_{t=1}^{\lfloor mx\rfloor}n^{2/3} [\lambda_t- b^{(n)}], \qquad x \in [0,T_m].
\end{align}
Here, $(T_m)_{m \in \mathbb{N}}$ is a sequence of positive numbers such that
$T_m \to \infty$,  as $m \to \infty$.  On the growing interval of length $T_m$,
we will approximate $P_m$ by a Gaussian process.
The constants $\tau_n$, $b^{(n)}$ are standardizing sequences
defined as follows: First, $b^{(n)}$ is the conditional mean
\begin{align} \label{eq_def_b_n}
	b^{(n)} := \mathbb{E}[\lambda_1|\Lambda_1],
\end{align}
where the event $\Lambda_1$ is defined in \eqref{eq_def_Lambda_t}. Second, setting $Y_t:= \mathbb{I}\{\Lambda_t\}(\lambda_t-b^{(n)})$,  we define $\tau_n$ as the finite sample variance
\begin{align} \label{e:def:tau_n}
	\tau_{n}:=\mathbb{E}\bigg(\frac{1}{\sqrt{\lfloor mT_m\rfloor }}\sum_{t=1}^{\lfloor mT_m\rfloor } Y_t \bigg)^2.
\end{align}
By the construction of the $b^{(n)}$,  the variables $Y_t$ are  centered.
For any large enough $n$, the variance $\tau_{n}$ can be shown
to be close to the long-run variance
\begin{align}\label{e:def:tau^n}
	\tau^{(n)}:= \sum_{t \in \mathbb{Z}}\mathbb{E}[Y_t Y_0],
\end{align}
which is basically the long-run variance for fixed $n$ and $m \to \infty$.
Notice that  the objects $b^{(n)}, Y_t, \tau_n, \tau^{(n)} $
all depend on the events $\Lambda_t$, which in turn involve a truncation variable $\varepsilon>0$, which will be part of the following assumption.

\begin{assumption} The following conditions hold under the null hypothesis $\mathsf{H}_0$ \label{ass_3}
	\begin{itemize}
		\item[i)]  For any pair $(n,t)$, the triple $(\mathbf{W}_{n,t}, \mathbf{x}_{n,t},s_t)$ satisfies Assumption \ref{ass_wigner}.
		\item[ii)] For any $n$, the sequence $(\mathbf{W}_{n,t}, \mathbf{x}_{n,t},s_t)_{t \in \mathbb{Z}}$ is strictly stationary. The triangular array $(\mathbf{W}_{n,t}, \mathbf{x}_{n,t},s_t)_{t \in \mathbb{Z}}$ is $\phi$-mixing with coefficients
		\[
		\phi(r) \le C a^{r},
		\]
		for all $r\ge 1$
		for some fixed constants $C>0$  and $a \in (0,1)$.
		\item[iii)] There exist constants $\varepsilon_\tau, c_\tau>0$ such that
		\[
		\inf_{0<\varepsilon \le \varepsilon_\tau}\liminf_{n \to \infty} \tau^{(n)}>c_\tau.
		\]
	\end{itemize}
\end{assumption}
The first part of the assumption states that the data arriving at each time point follows the structure of a deformed Wigner matrix.
Part ii) guarantees that the time series under investigation is weakly dependent. In particular, it implies that $\tau^{(n)}<\infty$ for any $n$ and any $\varepsilon>0$. Part iii) of the assumption guarantees the non-degeneracy of the long-run variance for any small $\varepsilon>0$.

We now state a
Gaussian approximation for $P_m$ on the growing interval $[0,T_m]$,
which is a key theoretical result needed to establish the validity
of our method. The proof is presented in Section \ref{ss:p-Gauss}.

\begin{theorem}[Gaussian approximation] \label{lem_3}
	Suppose that Assumptions \ref{ass_wigner}, \ref{ass_2} and \ref{ass_3} hold and that $\mathsf{H}_0$ is true. Furthermore,  for some $\rho \in (0,1/2)$ suppose that $T_m \asymp m^{1/2-\rho}$. Then, on a sufficiently rich probability space (dependent on $m$), we can define versions of $P_m$ and the standard Brownian motion $B_m$ such that
	\[
	\mathbb{P}\Big(\sup\big\{|P_m(x)-\sqrt{\tau_n} \cdot B_m(x)|: x \in [0,T_m]\big\}>d_m\Big)<d_m.
	\]
	Here, $(d_m)_{m \in \mathbb{N}}$ is a sequence of positive numbers converging to $0$.
\end{theorem}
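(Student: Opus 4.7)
The plan is to reduce the problem to a strong invariance principle for a bounded, exponentially $\phi$-mixing sequence via eigenvalue truncation, and then to apply an off-the-shelf Gaussian coupling whose rate survives on the growing interval $[0,T_m]$.

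\textbf{Step 1 (Reduction via eigenvalue truncation).} Because no moment bounds for $n^{2/3}(\lambda_t - 2\sigma)$ are presently available, I would first replace these variables by their bounded, truncated counterparts $Y_t$ on the high-probability event $\bigcap_{t=1}^{\lfloor mT_m\rfloor}\Lambda_t$. By Lemma \ref{lem_rmt}, $\mathbb{P}(\Lambda_t^c) \le n^{-D}$ for any $D>0$ and all $n$ large enough. Since $m \asymp n^\theta$ and $T_m \asymp m^{1/2-\rho}$, a union bound gives
\begin{align*}
\mathbb{P}\Bigl(\bigcup_{t=1}^{\lfloor mT_m\rfloor}\Lambda_t^c\Bigr) \le \lfloor mT_m\rfloor\cdot n^{-D}=o(1),
\end{align*}
upon choosing $D$ sufficiently large relative to $\theta$ and $\rho$. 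On the good event, the partial sum process driving $P_m$ coincides with $\sum_{t=1}^{\lfloor m\cdot\rfloor} Y_t$ (up to the $n^{2/3}$ rescaling), and the summands obey the uniform bound $|Y_t|\lesssim n^{\varepsilon-2/3}$. The same rigidity estimate yields $|b^{(n)}-2\sigma|\lesssim n^{\varepsilon-2/3}$, ensuring that the centering is consistent.

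\textbf{Step 2 (Strong Gaussian approximation for bounded $\phi$-mixing sequences).} For each fixed $n$, the truncated sequence $(Y_t)_{t\in\mathbb{Z}}$ is strictly stationary, centered, uniformly bounded, and exponentially $\phi$-mixing by Assumption \ref{ass_3} ii), since $Y_t$ is a measurable functional of $(\mathbf{W}_t,\mathbf{x}_t,s_t)$. I would then invoke a strong invariance principle for uniformly bounded, exponentially $\phi$-mixing sequences --- for instance a Skorokhod embedding in the spirit of Berkes--Philipp or a more modern variant (Berkes--Liu--Wu, Merlev\`ede--Peligrad--Utev) --- to construct, on a sufficiently rich probability space, a standard Brownian motion $\tilde B_m$ and a coupling
\begin{align*}
\max_{1\le j\le \lfloor mT_m\rfloor}\Bigl|\sum_{t=1}^{j} Y_t - \sqrt{\tau^{(n)}}\,\tilde B_m(j)\Bigr| \le r_{n,m}
\end{align*}
holding with probability at least $1-r_{n,m}'$, where $r_{n,m}$ and $r_{n,m}'$ depend polynomially on the uniform bound, the interval length, and the mixing parameter $a$. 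Rescaling via $B_m(x):=m^{-1/2}\tilde B_m(\lfloor mx\rfloor)$, together with a standard oscillation bound for Brownian motion to fill the gaps between consecutive integers, transfers the coupling to the continuous process setting.

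\textbf{Step 3 (Variance identification and rate calibration).} Two tasks remain. First, to replace the long-run variance $\tau^{(n)}$ by the finite-sample variance $\tau_n$: exponential $\phi$-mixing together with the uniform bound yields $|\tau_n - \tau^{(n)}|=o(1)$ through a direct computation summing the autocovariances beyond a $\log$-length truncation window, and Assumption \ref{ass_3} iii) keeps $\tau_n$ bounded away from zero. Second, after dividing by $\sqrt{m\tau_n}$, the coupling from Step 2 implies
\begin{align*}
\sup_{x\in[0,T_m]}\bigl|P_m(x) - \sqrt{\tau_n}\,B_m(x)\bigr|\lesssim \frac{r_{n,m}}{\sqrt{m\tau_n}} + \text{(oscillation and variance-matching terms)}.
\end{align*}
Choosing $\varepsilon$ sufficiently small relative to $\theta$ and $\rho$ makes the right-hand side vanish; taking $d_m$ to be the maximum of this bound and $r_{n,m}'$ together with the failure probability from Step 1 produces the desired conclusion.

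\textbf{Main obstacle.} The principal difficulty is securing a strong approximation rate whose error remains $o(\sqrt{m\tau_n})$ while both the summand bound ($\sim n^{\varepsilon-2/3}$ before the $n^{2/3}$ rescaling) and the interval length $\lfloor mT_m\rfloor \asymp m^{3/2-\rho}$ diverge simultaneously with $m$. Classical SIP bounds degrade polynomially in both quantities, so the exponential decay of $\phi(r)$ in Assumption \ref{ass_3} ii) is essential: under only polynomial mixing the denominator $\sqrt{m\tau_n}$ could not be beaten. Calibrating $\varepsilon$ (small enough to control the SIP rate, yet large enough for rigidity to hold with the required probability) against $\theta$ and $\rho$ is the most delicate quantitative step, and it is here that the interplay between random matrix theory and time series Gaussian approximation is at its tightest.
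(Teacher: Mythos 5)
Your overall architecture --- truncate to the bounded variables $Y_t$ via eigenvalue rigidity, couple the resulting bounded, exponentially $\phi$-mixing partial sums to a Brownian motion, then calibrate the variance and the rates --- matches the paper's proof. The gap is in Step 2, which is the crux of the whole theorem and which you delegate to ``an off-the-shelf'' strong invariance principle (Berkes--Philipp, Berkes--Liu--Wu, Merlev\`ede--Peligrad--Utev). Those results are stated for a \emph{fixed} sequence of random variables; here the summands $Y_t = Y_{t,n}$ form a triangular array whose law, uniform bound ($\asymp n^{\varepsilon}\asymp m^{\varepsilon_1}$), and autocovariance structure all change with the row index $n=n_m$. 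To make any such theorem usable one needs every constant in the approximation rate to be tracked explicitly as a function of the summand bound, the mixing rate, and the horizon $\lfloor mT_m\rfloor$, uniformly in $n$ --- and you assert this polynomial dependence rather than exhibit a theorem that delivers it. The paper resolves exactly this difficulty by invoking a recent result of Hafouta (2023, Theorem~2.3), which is formulated for such arrays and whose constants $K_{p,N}$, $A_N$, $\beta_N$ are bounded explicitly in terms of $\|Y_t\|_\infty$ and the mixing coefficients; the bulk of the paper's part~(b) consists precisely of verifying that these constants grow only like $m^{\varepsilon}$.

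A second, related omission: Hafouta's theorem (like most quantitative FCLTs for non-i.i.d.\ arrays) approximates the partial sums indexed by their \emph{cumulative variance}, not by uniform time. The paper therefore has to introduce the time change $v_m(x)$ and prove separately (part~(a), using Lemma~\ref{lem_var}, a maximal inequality of M\'oricz--Serfling--Stout, and Yoshihara's moment bound) that $P_m''$ and $P_m''(v_m(\cdot))$ are uniformly close --- i.e.\ that $\tau_{k,n}$ is close to $\tau^{(n)}$ uniformly over $k\gtrsim m^{1/2+\varepsilon_3}$, with a delicate separate treatment of the initial segment $x\le m^{-1/2+\varepsilon_3}$. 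Your Step~3 mentions variance identification only as a scalar comparison $|\tau_n-\tau^{(n)}|=o(1)$, which does not substitute for this uniform-in-$k$ control. Finally, note that the paper works throughout with the Prokhorov metric and converts distances into couplings only at the end via the Strassen--Dudley theorem (Lemma~\ref{lem:huber}); this is a cleaner route than constructing the coupling directly, since it lets one pass freely between $P_m'$, its linear interpolation, and $P_m$ using the early replacement property.
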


The Gaussian approximation can be used to approximate the denominator $V_m$ in the self-normalized statistic by a transformation of a Brownian motion
(see its definition in \eqref{e:def:Vm}) and similarly  for the numerator $D_m(k)$ for values of $k \le m T_m$. To control the statistic for larger values of $k$, the following tail bound is necessary.

\begin{lemma}[Tail bound] \label{lem_4}
	Suppose that Assumptions \ref{ass_wigner}, \ref{ass_2} and \ref{ass_3}
	hold and that $\mathsf{H}_0$ is true. Then it follows for any $\zeta>0$ that
	\[
	\sup_{m^{1+\zeta}< k<\infty } \Gamma_{n,m}(k)=\frac{-P_m(1)}{V_m/\sqrt{\tau_n}}+o_P(1).
	\]
	The $o_P(1)$ term is a negligible remainder,  as $m, n=n_m \to \infty$.
\end{lemma}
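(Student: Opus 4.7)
The plan is to expand $\Gamma_{n,m}(k)$ in terms of the partial sum process $P_m$, reduce the assertion to a uniform smallness bound on a single quantity, and then split the supremum at the horizon $T_m$ of the Gaussian approximation. First, I exploit the cancellation of the centering $b^{(n)}$ inside the numerator of $D_m(k)$ (replacing $\lambda_t$ by $\lambda_t - b^{(n)}$ does not alter $D_m(k)$) to obtain
\[
\Gamma_{n,m}(k) \;=\; \frac{1}{V_m/\sqrt{\tau_n}}\left[\,\frac{m}{m+k}P_m\!\Big(1+\frac{k}{m}\Big) - P_m(1)\,\right].
\]
A parallel calculation yields $V_m/\sqrt{\tau_n} = \frac{1}{m}\sum_{t=1}^m |P_m(t/m)-(t/m)P_m(1)|$, which by Theorem \ref{lem_3} and continuous mapping converges in distribution to $\int_0^1 |B(s)-sB(1)|\,ds$, a random variable that is strictly positive a.s. Hence $V_m/\sqrt{\tau_n}$ is bounded away from $0$ in probability, and, after substituting $x = 1+k/m$, the claim reduces to
\[
\sup_{x \geq 1+m^\zeta}\frac{|P_m(x)|}{x} \;=\; o_P(1).
\]

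I would split this supremum at $x = T_m$. Since the statement for small $\zeta$ implies the one for larger $\zeta$, I may assume $\zeta < 1/2-\rho$, so that $m^\zeta = o(T_m)$. On the bounded piece $[1+m^\zeta, T_m]$, Theorem \ref{lem_3} replaces $P_m(x)$ by $\sqrt{\tau_n}B_m(x)$ with uniform $o_P(1)$ error; the Brownian time-inversion identity (the process $x \mapsto x B(1/x)$ is itself a standard Brownian motion) then implies $\sup_{x \geq A}|B_m(x)|/x \to 0$ in probability as $A \to \infty$ by continuity of Brownian motion at the origin. Taking $A = 1+m^\zeta \to \infty$ handles this piece.

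The hardest step --- and the main obstacle --- is the unbounded tail $x > T_m$, beyond the reach of Theorem \ref{lem_3}. Here I would decompose $\lambda_t - b^{(n)} = Y_t + R_t$, with $Y_t = (\lambda_t - b^{(n)})\mathbb{I}\{\Lambda_t\}$ bounded by $2n^{\varepsilon-2/3}$ thanks to Lemma \ref{lem_rmt}(a), and $R_t = (\lambda_t - b^{(n)})\mathbb{I}\{\Lambda_t^c\}$. The remainder $R_t$ is controlled by Cauchy--Schwarz together with a polynomial moment bound on the Wigner operator norm and $\mathbb{P}(\Lambda_t^c) \leq n^{-D}$; choosing $D$ arbitrarily large and using $m \asymp n^\theta$ makes its total contribution super-polynomially small. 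For the bounded, centered, stationary, exponentially $\phi$-mixing summands $n^{2/3}Y_t$, a Bernstein/Rio-type maximal inequality yields
\[
\mathbb{P}\!\left(\max_{N \leq 2A}\Big|\sum_{t=1}^{N}n^{2/3}Y_t\Big| > \delta A/\sqrt{m}\right) \leq C e^{-c\delta^2 A/m}
\]
for $A \geq m T_m$, provided $\varepsilon$ is chosen small enough that $n^\varepsilon = o(\sqrt{m})$. A dyadic-block decomposition over $[2^i, 2^{i+1}]$ with $2^i \geq mT_m$, together with a union bound, then produces a geometrically summable tail, giving $\sup_{N > mT_m}\sqrt{m}\,|S_N|/N \to 0$ in probability. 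The delicate part is balancing the truncation level $\varepsilon$ against the polynomial relation $m \asymp n^\theta$ and the mixing rate, so that the $R_t$-error, the Bernstein tail, and the Gaussian-approximation error all vanish jointly as $m, n \to \infty$.
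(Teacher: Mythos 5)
Your reduction of the lemma to $\sup_{x\ge 1+m^{\zeta}}|P_m(x)|/x=o_P(1)$ is correct (the centering $b^{(n)}$ does cancel in both $D_m(k)$ and $V_m$, and $(V_m/\sqrt{\tau_n})^{-1}=\mathcal{O}_P(1)$), and your treatment of the range $x\le T_m$ via Theorem \ref{lem_3} and Brownian time inversion is a legitimate alternative to the paper's route, which never uses the Gaussian approximation inside this lemma but instead bounds $\sup_{k}k^{-1/2-\zeta'/2}\,|\sum_{t\le k}Y_t|=\mathcal{O}_P(1)$ directly on the truncated variables. One point you should make explicit in that range: the approximating process is $\sqrt{\tau_n}\,B_m$ with $\sqrt{\tau_n}\lesssim n^{\varepsilon}$, so the time-inversion bound $\mathcal{O}_P(m^{-\zeta/2})$ must absorb an $m^{\varepsilon_1}$ factor, which it does for $\varepsilon$ small. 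Your Bernstein-plus-dyadic-blocks treatment of the truncated part $n^{2/3}Y_t$ on $k>mT_m$ also closes: the block probabilities $\exp(-c\delta^2 2^{j}T_m/n^{2\varepsilon})$ are geometrically summable and their sum tends to $0$ because $T_m\to\infty$.

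The genuine gap is the remainder $R_t=(\lambda_t-b^{(n)})\mathbb{I}\{\Lambda_t^c\}$ on the unbounded range --- exactly the point the paper flags as the crux (``there is an infinite number of these values and we cannot truncate all of them''). Your Cauchy--Schwarz argument controls $\mathbb{E}|R_t|$ per term, which handles any fixed $N$, and the union bound $\mathbb{P}(\exists\, t\le m^L:\Lambda_t^c)\lesssim m^{L}n^{-D}$ removes all $R_t$ simultaneously up to any fixed polynomial horizon $m^L$; but beyond every such horizon neither device controls $\sup_{N}\frac{\sqrt m}{N}\sum_{t\le N}n^{2/3}|R_t|$: a union bound over infinitely many dyadic blocks diverges when each block contributes a probability of fixed (if small) size, and $\mathbb{E}\bigl[\sup_N N^{-1}\sum_{t\le N}Z_t\bigr]$ is not bounded by $\mathbb{E}Z_1$. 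The gap is fixable: the weak-type maximal ergodic (Hopf) inequality for stationary sequences gives $\mathbb{P}(\sup_{N\ge1}N^{-1}\sum_{t\le N}Z_t>\delta)\le \mathbb{E}Z_1/\delta$ with $Z_t=\sqrt m\,n^{2/3}|R_t|$, and $\mathbb{E}Z_1$ is super-polynomially small by your bound. The paper avoids the issue differently: for $k>m^L$ it abandons truncation entirely, works with $Y_t'=n^{2/3}(\lambda_t-\mathbb{E}\lambda_t)$, pays a factor $n^{5/3}$ from a crude moment bound, and recovers it from the discount $\sqrt m/(m+k)\lesssim m^{-L/3+1/2+(5/3)\theta}(m+k)^{-2/3}n^{-5/3}$ combined with $\sup_k k^{-2/3}|\sum_{t\le k}Y_t'/n^{5/3}|=\mathcal{O}_P(1)$. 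Either repair works, but as written your argument for $R_t$ does not cover the infinite tail.
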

The proof given in Section \ref{ss:p-l3} is challenging and interesting.
For values of $k \le m T_m$ in the statistic $\Gamma_{n,m}(k)$ we use the Gaussian approximation from Theorem \ref{lem_3}. Now, for larger $k$ we have no Gaussian approximation, but we can still truncate the eigenvalues $\lambda_t$ with high probability using the event $\Lambda_t$
(see \eqref{eq_def_Lambda_t}) for $m T_m < k \le m^D$ and some large constant $D$.
For these $k$, we can then use bounds for sums of bounded, mixing random variables
(even though the bound depends on $n$). Finally, for values $k>m^D$, even these tight,
high-probability truncations are not available anymore; there is an infinite number
of these values and we cannot truncate all of them. So, for these very large $k$,
we use the fact that our statistic involves a temporal discounting
(the factor $1/(m+k)$ in the definition of
$D_m$;  \eqref{e:def:gam}). Since $k$ is so large,
it effectively vanquishes the fluctuations of the remaining eigenvalues.
In the proof presented in Section \ref{ss:p-main},
we combine Theorem \ref{lem_3} and Lemma \ref{lem_4} to obtain
our main result.

\begin{theorem} \label{theomain}
	If Assumptions \ref{ass_wigner}, \ref{ass_2} and \ref{ass_3}
	hold and  $\mathsf{H}_0$ is true, then  convergence \refeq{weakL}
	holds.
\end{theorem}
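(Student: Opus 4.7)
The plan is to reduce $\sup_{1\le k<\infty}\Gamma_{n,m}(k)$ to a continuous functional of the Brownian motion $B_m$ coupled to $P_m$ in Theorem~\ref{lem_3}. A first algebraic observation is that the centering $b^{(n)}$ cancels in both $D_m(k)$ and $V_m$, since the two sums in each definition differ by a term linear in $b^{(n)}$. Writing $x=k/m$, $x'=\lfloor mx\rfloor/m$, and using $\sum_{s=1}^{\lfloor my\rfloor} n^{2/3}(\lambda_s-b^{(n)}) = \sqrt{m\tau_n}\,P_m(y)$, this yields the representations
\begin{align*}
D_m(\lfloor mx\rfloor) &= \frac{\sqrt{\tau_n}}{1+x'}\bigl(P_m(1+x') - (1+x')\,P_m(1)\bigr),\\
V_m &= \frac{\sqrt{\tau_n}}{m}\sum_{t=1}^{m}\bigl|P_m(t/m) - (t/m)\,P_m(1)\bigr|,
\end{align*}
so the factor $\sqrt{\tau_n}$ cancels in the self-normalized ratio $\Gamma_{n,m}$. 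This cancellation is the main purpose of the normalization, since the existence of an asymptotic long-run variance is not presupposed.

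Next I would split the supremum at $k_m = \lfloor m^{1+\zeta}\rfloor$ for some $\zeta \in (0,1/2-\rho)$, so that $m^\zeta \le T_m$ for large $m$. On the bulk $1\le k \le k_m$, Theorem~\ref{lem_3} replaces $P_m$ by $\sqrt{\tau_n}\,B_m$ uniformly on $[0,T_m]$ up to an additive $o_P(1)$. After the $\sqrt{\tau_n}$ cancellation this yields, uniformly in $x=k/m\in[1/m,\,m^\zeta]$,
\[
\Gamma_{n,m}(k) = \frac{[B_m(1+x)-B_m(1)] - x\, B_m(1)}{(1+x)\,\frac{1}{m}\sum_{t=1}^m|B_m(t/m)-(t/m)B_m(1)|} + o_P(1),
\]
and the Riemann sum in the denominator converges almost surely to $\int_0^1|B_m(s)-sB_m(1)|\,ds$ by continuity of $B_m$ on $[0,1]$. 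For the tail, Lemma~\ref{lem_4} directly gives
\[
\sup_{k>k_m}\Gamma_{n,m}(k) = \frac{-P_m(1)}{V_m/\sqrt{\tau_n}} + o_P(1) = \frac{-B_m(1)}{\int_0^1|B_m(s)-sB_m(1)|\,ds} + o_P(1),
\]
which coincides with the $x\to\infty$ limit of the bulk integrand.

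It remains to combine both ranges. The functional $G(x) = ([B(1+x)-B(1)]-xB(1))/[(1+x)\int_0^1|B(s)-sB(1)|\,ds]$ is almost surely continuous on $[0,\infty)$ and admits the finite limit $-B(1)/\int_0^1|B(s)-sB(1)|\,ds$ as $x\to\infty$, so $\sup_{x\in[0,m^\zeta]}G(x)\to\sup_{x\in[0,\infty)}G(x)$ almost surely as $m^\zeta\to\infty$. Combined with the tail identity above, this yields the distributional convergence in \eqref{e:weakL}. The main obstacle I foresee is the uniformity of the bulk approximation over the expanding window $[0,m^\zeta]$: one must simultaneously control the Gaussian coupling error of Theorem~\ref{lem_3}, the eigenvalue truncation error supplied by Lemma~\ref{lem_rmt}, and the Riemann-sum error in the denominator, all with rates independent of the sample path of $B_m$.
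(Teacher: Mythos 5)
Your proposal is correct and follows essentially the same route as the paper: split the supremum at a polynomial threshold $k\asymp m^{1+\zeta}$, handle the tail via Lemma~\ref{lem_4}, rewrite the bulk as a functional of $P_m$ (with $b^{(n)}$ and $\sqrt{\tau_n}$ cancelling), invoke the coupling of Theorem~\ref{lem_3} on the expanding window, and identify the tail term with the $x\to\infty$ limit of the integrand before passing to the full supremum by pathwise monotone convergence. The only point worth making explicit (which the paper does) is that the ratio's $o_P(1)$ errors are harmless because $V_m/\sqrt{\tau_n}$ converges to an almost surely positive limit.
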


Corollary \ref{cor_test} collects the theoretical guarantees for our monitoring
procedure under both the null and alternative hypothesis. As an immediate
consequence of Theorem \ref{theomain},
test decision  \eqref{eq_test} has  an asymptotically controlled
level under $\mathsf{H}_0.$ Moreover, we provide consistency under
$\mathsf{H}_1$ in  \eqref{e:alt}. To this end, we assume that a change
occurs at an unknown time point $m+k^*$ and we allow $k^*$ to
depend on $m$. More precisely, suppose that
\begin{align} \label{e:polyk}
	k^* \asymp m^D \ \ {\rm for \ some } \ D\ge 0.
\end{align}

\begin{corollary} \label{cor_test}
	Suppose that Assumptions \ref{ass_wigner}, \ref{ass_2} and \ref{ass_3} hold.
	\begin{enumerate}[label=(\alph*)]
		\item Under $\mathsf{H}_0$,
		\begin{align*}
			\lim_{m\to\infty}  \PR \left ( \sup_{1 \leq k < \infty}
			\Gamma_{n,m}(k) >q_{1-\alpha} \right ) = \alpha .
		\end{align*}
		\item Under $\mathsf{H}_1$ and \eqref{e:polyk}, it holds that
		\begin{align*}
			\lim_{m\to\infty}  \PR \left ( \sup_{1 \leq k < \infty}
			\Gamma_{n,m}(k) >q_{1-\alpha} \right) =1.
		\end{align*}
	\end{enumerate}
\end{corollary}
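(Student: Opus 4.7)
The plan is to deduce part (a) directly from Theorem~\ref{theomain} and a continuity argument, and to prove part (b) by exhibiting a single value of $k$ (depending on $m$) along which the detector diverges in probability under $\mathsf{H}_1$.

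\emph{Part (a).} Theorem~\ref{theomain} gives $\sup_{1\le k<\infty}\Gamma_{n,m}(k)\convd Z$, where $Z$ denotes the Brownian functional on the right side of \eqref{e:weakL}. Since $Z$ is a non-degenerate smooth functional of Brownian motion, its distribution function is continuous on $(0,\infty)$, so $q_{1-\alpha}$ is a continuity point, and the Portmanteau theorem yields $\PR(\sup_k\Gamma_{n,m}(k)>q_{1-\alpha})\to\PR(Z>q_{1-\alpha})=\alpha$.

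\emph{Part (b).} Under $\mathsf{H}_1$ the training block $\mathbf{M}_1,\ldots,\mathbf{M}_m$ is still subcritical, so $V_m$ has exactly the same distribution as under $\mathsf{H}_0$. Theorem~\ref{lem_3} together with the continuous mapping theorem gives $V_m/\sqrt{\tau_n}\convd\int_0^1|B(s)-sB(1)|\,ds>0$, while Assumption~\ref{ass_3}(ii)--(iii) bounds $\tau_n$ from above and below, so $V_m$ is tight and bounded away from zero in probability. For the numerator I would choose
\[
\hat k:=2\max(k^*,m),
\]
which ensures $(\hat k-k^*)/(m+\hat k)\ge c_0>0$ uniformly in $m$, regardless of the polynomial exponent $D$ in \eqref{e:polyk}. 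Splitting $\sum_{t=m+1}^{m+\hat k}\lambda_t$ at $t=m+k^*$, the subcritical portion is controlled by Lemma~\ref{lem_rmt}(a) combined with a union bound over the polynomially many $t\le m+k^*$, giving a uniform approximation $\lambda_t=2\sigma+O(n^{\varepsilon-2/3})$ whose aggregate contribution is negligible after the rescaling below. The supercritical portion is handled by Lemma~\ref{lem:pizzo} together with a $\phi$-mixing LLN for $(s_t+\sigma^2/s_t)_{t>m+k^*}$, yielding
\[
\frac{1}{\hat k-k^*}\sum_{t=m+k^*+1}^{m+\hat k}\lambda_t\conp\mu_*:=\mathbb E_{\PR^{(2)}}\bigl[s+\sigma^2/s\bigr]>2\sigma,
\]
where the strict inequality is the elementary fact that $s+\sigma^2/s>2\sigma$ on $(\sigma,\infty)$. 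The training-based recentering $(\hat k/m)\sum_{t=1}^m\lambda_t=\hat k\cdot 2\sigma+O_P(\hat k\,n^{-2/3}/\sqrt m)$ again follows from rigidity and yields an error of order $O_P(1)$ after multiplication by $n^{2/3}\sqrt m/(m+\hat k)$. Combining,
\[
D_m(\hat k)=\frac{n^{2/3}\sqrt m\,(\hat k-k^*)}{m+\hat k}(\mu_*-2\sigma)(1+o_P(1))\gtrsim n^{2/3}\sqrt m,
\]
so $\Gamma_{n,m}(\hat k)=D_m(\hat k)/V_m\conp\infty$ and the rejection probability tends to $1$.

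The main obstacle is the Cesàro-convergence step for the supercritical block: Lemma~\ref{lem:pizzo} is only a pointwise-in-$t$, single-matrix statement, and turning it into convergence of the average over a diverging block of length $\hat k-k^*$ requires either a quantitative rate in $n$ (uniformised across $t$ by a union bound and Assumption~\ref{ass_2}) or an $L^1$-bound on $\lambda_t-(s_t+\sigma^2/s_t)$ paired with a $\phi$-mixing LLN. A clean workaround is to first truncate $s_t$ to a compact sub-interval $[\sigma+\eta,M]\subset(\sigma,\infty)$ on which $s\mapsto s+\sigma^2/s$ is bounded and bounded away from $2\sigma$, apply a standard $\phi$-mixing LLN to the truncated sequence, and then combine it with a uniform-in-$t$ version of Lemma~\ref{lem:pizzo} extracted from the concentration estimates underlying \cite{pizzo_et_al_2013}.
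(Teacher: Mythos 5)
Your overall strategy is the paper's: part (a) is immediate from Theorem~\ref{theomain}, and for part (b) you evaluate the detector at a single $k\asymp 2k^*$, split the running sum at the change point, control the pre-change block by rigidity (Lemma~\ref{lem_rmt}(a)) plus a union bound, and use positivity of the limit of $V_m/\sqrt{\tau_n}$ for the denominator. The gap is exactly where you flag it: the supercritical block. Lemma~\ref{lem:pizzo} is a fixed-$t$, in-probability statement with no rate, so it cannot be summed over a block of diverging length $\hat k-k^*\asymp m^D$, and the $\phi$-mixing LLN you invoke for $\frac{1}{\hat k-k^*}\sum\lambda_t\conp\mu_*$ is not justified by anything in the paper's toolbox. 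Your proposed repair (truncating $s_t$ and "extracting a uniform-in-$t$ version of Lemma~\ref{lem:pizzo} from \cite{pizzo_et_al_2013}") is precisely the missing hard step, and it is not available from \cite{pizzo_et_al_2013}; the paper instead imports Theorem~3.4 of \cite{knowles:yin:2014}, which gives $|\lambda_t-(s_t+\sigma^2/s_t)|<n^{-1/2+\chi}$ with faster-than-polynomial failure probability, so that a union bound over the polynomially many post-change indices survives (their display \eqref{e:id:knowles}, proved by a conditioning argument on $(s_t,\mathbf{x}_t)$).

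The paper also avoids your LLN altogether, and this is worth adopting: since the support of $\PR^{(2)}$ is a closed subset of $(\sigma,\infty)$, its left endpoint $x_*$ satisfies $x_*>\sigma+\eta$, and monotonicity of $g(x)=x+\sigma^2/x-2\sigma$ on $[\sigma,\infty)$ gives the \emph{deterministic} almost-sure bound $s_t+\sigma^2/s_t-2\sigma\ge\eta'>0$. A lower bound of this type is all that is needed for divergence; identifying the Ces\`aro limit $\mu_*$ (and hence worrying about integrability of $s+\sigma^2/s$, which motivated your truncation) is unnecessary. One smaller imprecision: Assumption~\ref{ass_3} bounds $\tau^{(n)}$ away from zero but only gives the upper bound $\tau^{(n)}\lesssim n^{2\varepsilon}$, so $\tau_n$ is not bounded above by a constant; the paper therefore tracks the factor $n^{-\varepsilon}$ in the lower bound $R_3/\sqrt{\tau_n}\gtrsim n^{2/3-\varepsilon}m^{1/2}$, which still diverges and still dominates the $R_1,R_2$ contributions for $\varepsilon,\chi$ small.
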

The proof of Corollary \ref{cor_test} is given in Section \ref{ss:p-cor}.

\section{Finite sample performance and applications}\label{s:fs}
In this section, we first present the results of
a small simulation study aimed at assessing the performance
of our method in finite samples. Then,
we show how it can be used to detect a sudden change in
particulate pollution or the emergence of a signal in a social group
structure.
We want to show that the new theory developed in this paper
is likely to have practical impact.

\subsection{A small simulation study}\label{ss:sim}
Recall that the monitoring is based on the upper
$\alpha$th quantile, $q_{1-\alpha}$,  in \eqref{eq_test}.
The change point is declared  at the smallest
$k$ for which $\Gamma_{n,m}(k)>q_{1-\alpha}$.
For this study, we consider the significance levels
$\alpha = 0.05$ and $\alpha = 0.10$.

\textbf{Computation of the critical values}
To compute the critical value $q_{1-\alpha}$ in \eqref{eq_test},
we must simulate the limit in \refeq{weakL}. Since one cannot
simulate the Brownian motion on the infinite half-line $(0, \infty)$,
some approximations are needed. In the spirit of the Gaussian
approximation, the most direct approach is  to simulate
the limit distribution by replacing the eigenvalues in \refeq{def:Vm}
and \refeq{def:gam} by standard normal random variables.
The self-normalization
automatically eliminates the need for any scale adjustment. The limit
can then be simulated for any $m$ and $T$ relevant to a specific application.
As we will see, the quantiles are practically the same if $m$ and $T$
are sufficiently large.

Specifically, setting $W(l) = \sum_{j=1}^l N_j$, where the $N_j$ are
iid $N(0,1)$, we put
\begin{equation} \label{e:LmT}
	L(m,T) = \max_{k = 1, 2, \dots, T} \frac{m^2}{m+k}
	\frac{W(m+k)-\frac{m+k}{m}W(m)}
	{\sum_{t = 1}^m \left| W(t)
		- \frac{t}{m}W(m) \right|}.
\end{equation}
The distribution of $L(m,T)$ is an approximation to the distribution
of the limit in \refeq{weakL}. Figure \ref{fig:q} shows the
empirical distribution of $L(m,T)$ for $T=500$ and several values of $m$.
The critical values $q_{0.95}$ and $q_{0.90}$ are the same
for $m=200, 300, 400, 500$ up to two decimal places. In the
following, we thus use
\[
q_{0.90} = 4.57, \ \ \ q_{0.95} = 5.85.
\]

\begin{figure}[H]
	\includegraphics[width=0.98\textwidth]{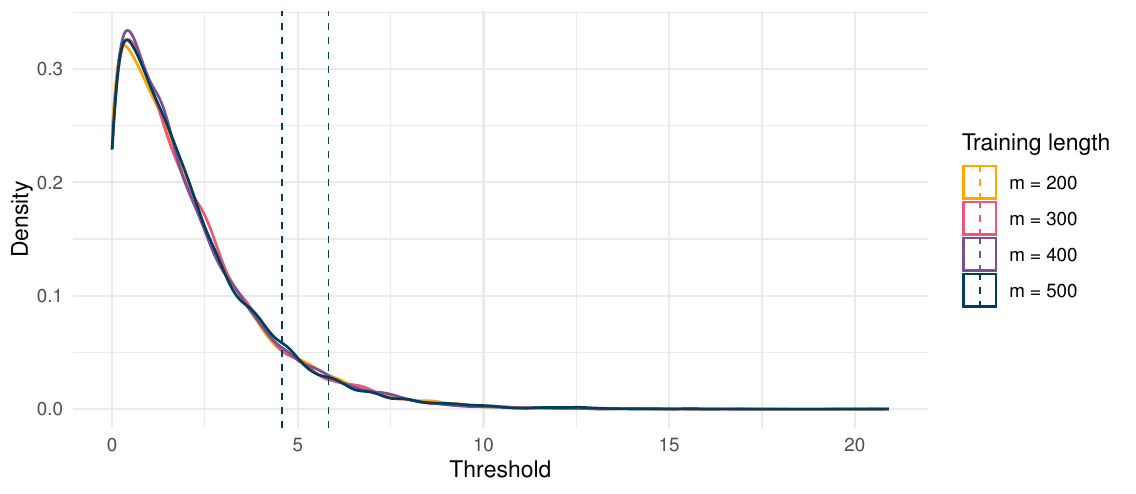}
	\caption{Empirical distribution of $L(m,T)$ based on
		10,000 replications for
		$T = 500$  and different training lengths $m$.
		The  vertical lines shows the critical values
		$q_{0.90}$ and $q_{0.95}$ for $m=500$. The lines
		for other values of $m$ are visually indistinguishable.}
	\label{fig:q}
\end{figure}

\textbf{Data generating process}
Recall that the matrix observations $\mathbf{M}_t$
are assumed to follow model \eqref{eq_def_M}.
The signal $\bfx_t$ is constructed as follows. We first generate
a random vector $\mathbf{y}_t\in \R^n$ whose components are independent and
follow the standard normal distribution.
We then normalize $\mathbf{y}_t$ to the unit norm to get $\bfx_t$.
The noise matrices $\bfW_t$ form  a sequence of dependent
Wigner matrices. Each component is an autoregressive process of order
one with a randomly selected autoregressive parameter in
the range $(-0.5, 0.5)$. These parameters are selected for each component before simulations are run producing
an $n\times n$ matrix $\bPhi$. Each component of the $n\times n$
matrix $\bfW_t$ thus follows an autoregressive process with a different
parameter. Using the Hadamard (entrywise) product $\odot$,
our data generating process
can be written down as
\[
\bfW_t = \sqrt{\mathbf{1} - \bPhi \odot \bPhi}
\odot (\bPhi \odot \bfW_{t-1} + \beg_t).
\]
The square root is applied entrywise to ensure that each  entry
has  unit variance.
{\em The critical threshold is thus $\sg =1$.}
The initial matrix $\bfW_1$ and the error matrices  $\beg_t$ are
Wigner matrices with independent  upper triangular
and diagonal elements generated from the standard normal distribution.
The first 50 observations are discarded to ensure approximate stationarity.

\begin{table}[H]\centering
		\caption{Proportion of False Alarms for different training lengths $m$ and dimensions $n$. For 1,000 replications, the precision of the entries is about $\pm 0.015$.}
		\label{tb:type1}
		\begin{tabular}{@{}llrrrrr@{}}
			\hline
			\toprule[1.2pt]
			
			&& \multicolumn{2}{c}{Uniform(0,1)} &
			& \multicolumn{2}{c}{Beta(2,4)}\\
			
			\cmidrule{3-4} \cmidrule{6-7}
			
			$m$ & $n$
			& $\alpha=0.05$ & $\alpha=0.10$ &
			& $\alpha=0.05$ & $\alpha=0.10$ \\
			
			\midrule[1.2pt]
			
			\multirow{4}{*}{300}
			& 10 & 0.066 & 0.130 && 0.063 & 0.131 \\
			& 25 & 0.062 & 0.131 && 0.067 & 0.125 \\
			& 50 & 0.067 & 0.112 && 0.056 & 0.112 \\
			& 100 & 0.062 & 0.118 && 0.064 & 0.119 \\
			\addlinespace
			\multirow{4}{*}{350}
			& 10 & 0.041 & 0.086 && 0.048 & 0.105 \\
			& 25 & 0.058 & 0.126 && 0.064 & 0.133 \\
			& 50 & 0.056 & 0.111 && 0.064 & 0.127 \\
			& 100 & 0.065 & 0.114 && 0.059 & 0.105 \\
			\addlinespace
			\multirow{4}{*}{400}
			& 10 & 0.052 & 0.103 && 0.058 & 0.112 \\
			& 25 & 0.049 & 0.097 && 0.061 & 0.118 \\
			& 50 & 0.065 & 0.129 && 0.067 & 0.128 \\
			& 100 & 0.062 & 0.128 && 0.065 & 0.129 \\
			\addlinespace
			\multirow{4}{*}{450}
			& 10 & 0.066 & 0.126 && 0.062 & 0.109 \\
			& 25 & 0.053 & 0.121 && 0.060 & 0.114 \\
			& 50 & 0.052 & 0.109 && 0.050 & 0.100 \\
			& 100 & 0.064 & 0.123 && 0.067 & 0.115 \\
			\addlinespace
			\multirow{4}{*}{500}
			& 10 & 0.060 & 0.122 && 0.054 & 0.112 \\
			& 25 & 0.048 & 0.119 && 0.064 & 0.130 \\
			& 50 & 0.040 & 0.098 && 0.047 & 0.099 \\
			& 100 & 0.043 & 0.090 && 0.045 & 0.095 \\
			
			\bottomrule[1.2pt]
		\hline	
		\end{tabular}
\end{table}

\textbf{Empirical size and power}
To assess the empirical false
alarm rate, we need to specify the distribution  $\mathbb{P}^{(1)}$. 
Based on 1000 replications,
Table \ref{tb:type1} exhibits the empirical size, typically called
the Proportion of False Alarms (PFA) in the monitoring context.
Almost all entries are within two standard errors of the nominal
sizes, and there appears to be no systematic dependence on the
dimension $n$ or the length of the training period $m$. There is
no noticeable difference either if different distributions
$\mathbb{P}^{(1)}$ on $[0,1]$ are used.
To assess the power, we also need to
specify $\mathbb{P}^{(2)}$. We use the same
distribution as for $\mathbb{P}^{(1)}$, but
rescaled to $[1,1+\delta]$. We note that as $\dg \to 0$,
the power does not tend to size because the transition
persists for any positive $\dg$. This is reflected in
Figure \ref{fig:power} that presents  power curves for the Beta
and Uniform distributions and dimensions $n=25, 50$.
The dimension $n=25$ is used in the data example in Section
\ref{ss:pp}. We have presented the curves for fairly large $k^*$.
If $k^* \le 300$, the power is 1 (based on 1000 replications)
even for small $\dg$.

\begin{figure}[H]
	\centering
	\begin{subfigure}{0.45\textwidth}
		\centering
		\includegraphics[width=\textwidth]{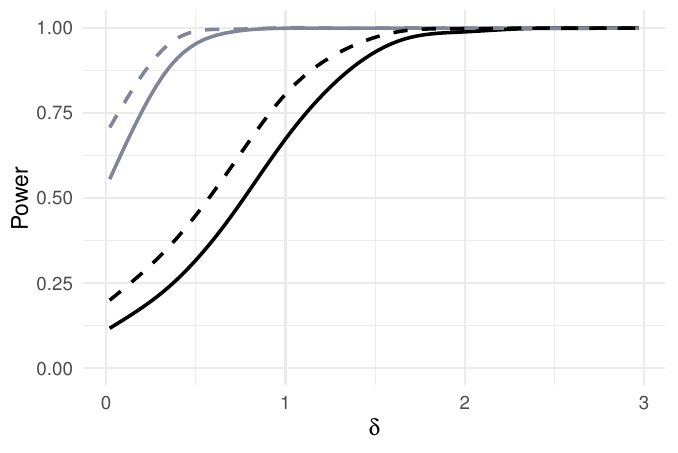}
		\caption{Uniform(0,1) with dimension $n = 25$.}
	\end{subfigure}
	\begin{subfigure}{0.45\textwidth}
		\centering
		\includegraphics[width=\textwidth]{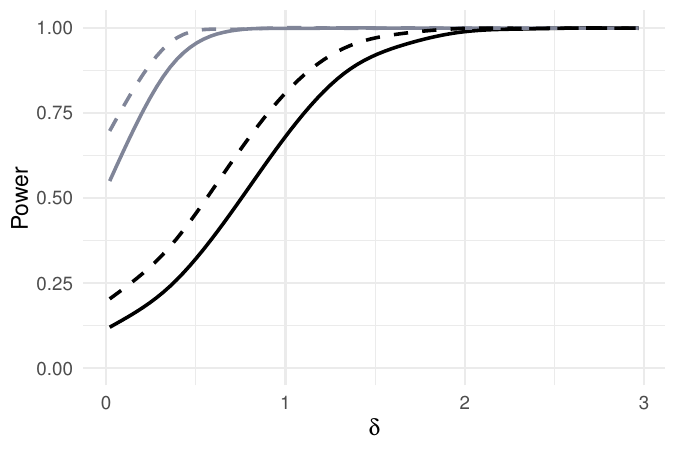}
		\caption{Uniform(0,1) with dimension $n = 50$.}
	\end{subfigure}
	
	\begin{subfigure}{0.45\textwidth}
		\centering
		\includegraphics[width=\textwidth]{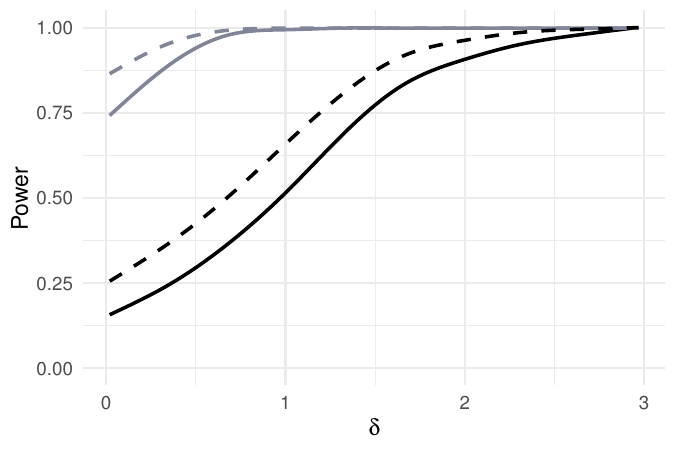}
		\caption{Beta(2,4) with dimension $n = 25$.}
	\end{subfigure}
	\begin{subfigure}{0.45\textwidth}
		\centering
		\includegraphics[width=\textwidth]{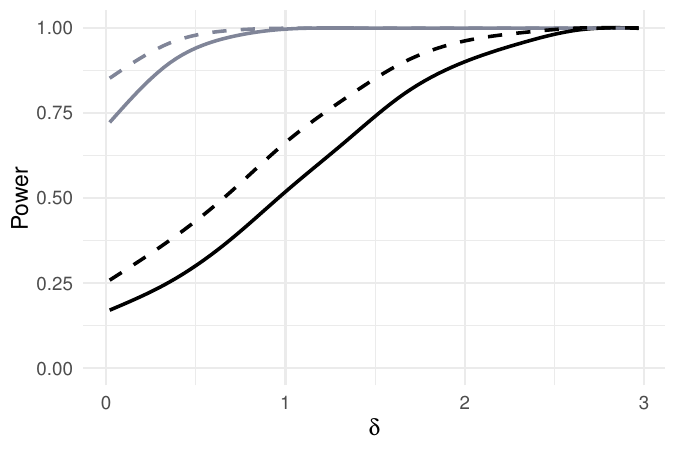}
		\caption{Beta(2,4) with dimension $n = 50$.}
	\end{subfigure}
	\caption{Power curves for selected distributions and dimensions,
		with a training length $m=400$. In each panel, the black and gray lines
		correspond to detection points $k^* = 450$ and $k^* = 350$, respectively.
		The dashed and solid lines represent significance levels of $\alpha = 0.05$
		and $\alpha = 0.10$, respectively.}
	\label{fig:power}
\end{figure}

\subsection{Detection of a transition  in regional particulate pollution levels}
\label{ss:pp}
In this section, we evaluate our monitoring procedure using
data from the United States Environmental Protection Agency.
We use $\text{PM}_{2.5}$ (particulate matter with a diameter
of up to 2.5 micrometers) concentration (in $mg/m^3$ LC)
from 2015 to 2020 (6 years).
Our purpose is to illustrate the application of our method
by detecting  the start of the Cameron
Peak Fire in Colorado  using only $\text{PM}_{2.5}$ measurements,
which are automatically reported.  Deployment of  aircraft
or land crews to assess a fire is much more expensive.
This wildfire was first reported on August 13, 2020,
and officially contained on December 2, 2020. While it started as a small
local fire, it grew immensely over several months. It became  the first wildfire in
Colorado to exceed 200,000 acres, and the largest wildfire in
Colorado's history. Figure \ref{fig:map} shows its location in the mountains
west of Fort Collins.

\begin{figure}[H]
	\centering
	\includegraphics[width=0.9\textwidth]{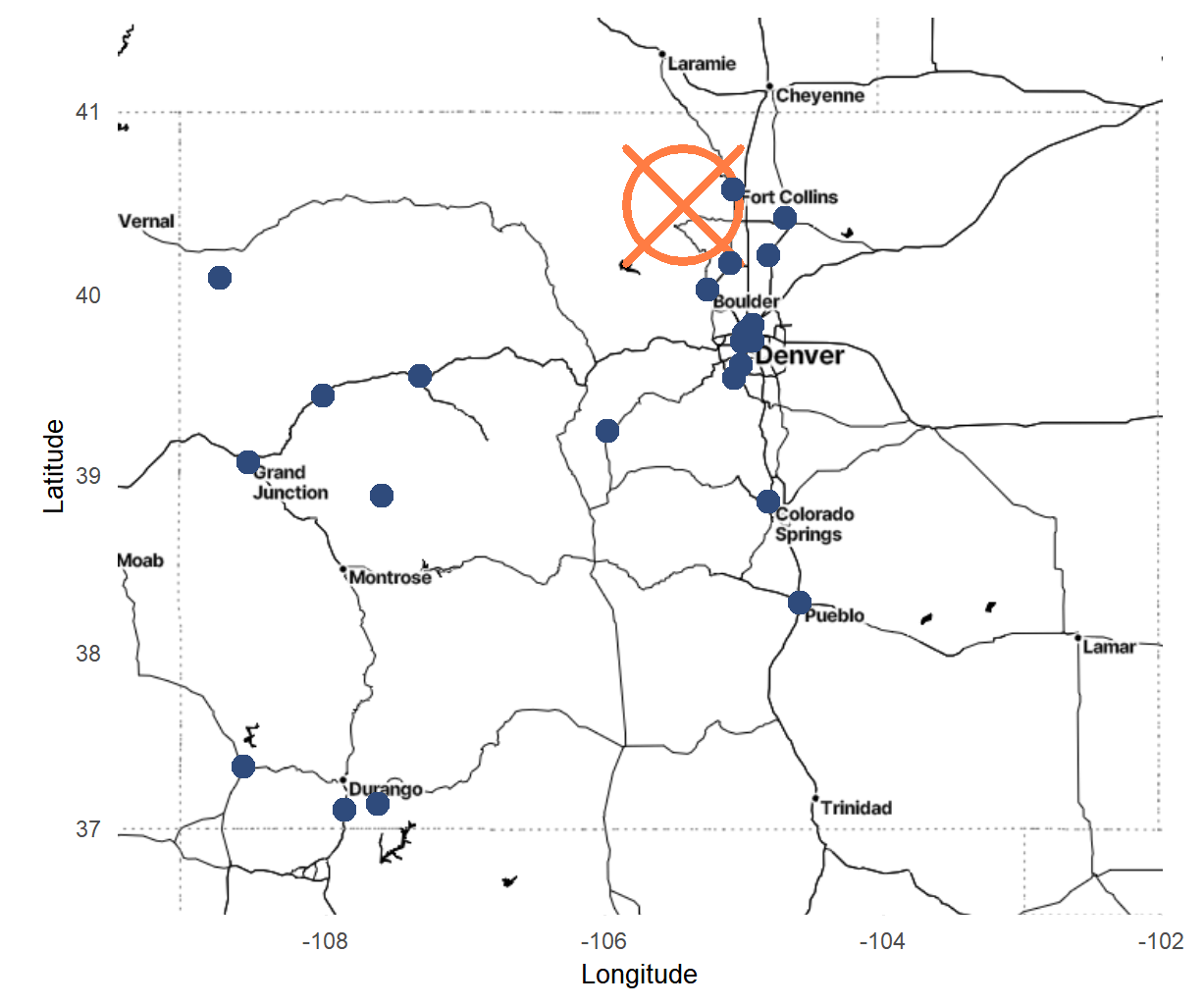}
	\caption{The crossed circle is the location of Cameron Peak Fire.
		The dots show the 25 locations at which $\text{PM}_{2.5}$
		concentration measurement are reported. \label{fig:map}}
\end{figure}

Let $\mathbf{U}_t$ denote the column vector whose components are the
$\text{PM}_{2.5}$ values observed on day $t$ at 25 locations shown in
Figure \ref{fig:map}. Some preprocessing of the raw data was necessary to
obtain the daily values, and we omit the details.
Suppose $U_t(s_k)$ is the $\text{PM}_{2.5}$ value on day $t$ at location $s_k$.
The records from 2015 to 2019 are used to estimate the seasonal component.
The $\text{PM}_{2.5}$ concentration exhibits an annual pattern due to
changing temperature,  prevailing wind direction and  strength,  and the
occurrence
of small, seasonal fires. Over the study
area,  $\text{PM}_{2.5}$ concentration are higher in the
second half of the calendar year.
The raw seasonal component is computed as
\[
S_t(s_k) = \frac{1}{5} \sum_{Y = 2015}^{2019} U_t^{(Y)}(s_k), \,
t = 1, 2, \dots, 365.
\]
and it is smoothed by a moving average of 30 days.
The seasonal component is only used to compute  the
deseasonalized values $V_t(s_k) = U_t(s_k) - S_t(s_k)$ for
$t$ corresponding to days in 2019 and 2020. For example, if $t$
corresponds to May 15, 2020, then $U_t(s_k)$ is the pollution
on May 15, 2020,  and $S_t(s_k)$ is the value of the seasonal component
on May 15. The monitoring starts on April 28, 2020.
In Northern Colorado,  snow storms occur until late April.
The training period is January 1, 2019 to April 27, 2020, $m=483$.
The potentially deformed Wigner matrices $\bM_t$ are given by the outer product of the sanitized vector
$\mathbf{V}_t:=(V_t(s_k))_k$.
At the significance level  $\alpha = 0.05$, we use  the threshold $q_{0.95} = 5.85$.
Figure \ref{fig:real} shows that based only on the $\text{PM}_{2.5}$ measurements,
the wildfire is detected on August 19, 2020, a few days after the first
visual identification.

\begin{figure}[H]
	\centering
	\includegraphics[width=0.9\textwidth]{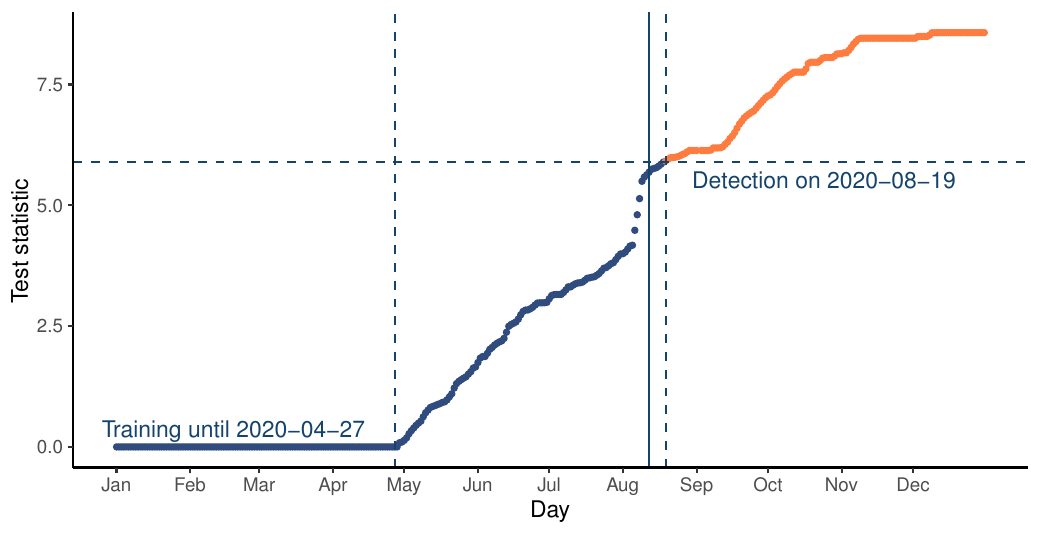}
	\caption{The detector $\max\Gamma_{n,m}(k)$ and the visualization of the detection
		process.  \label{fig:real} The horizontal dashed line corresponds to
		$q_{0.95} = 5.85$. The solid line represents the initial release date of the Cameron Peak Fire in Colorado.}
\end{figure}

\subsection{Monitoring social interactions in primates} \label{sec:43}

As a second application, we discuss the monitoring of social interactions in a group of baboons. The data was gathered by \cite{gelardi:godard:paleressompoulle:claidiere:barrat:2020} (we refer to that paper for a more detailed data description). It is based on wearable sensors that track social interactions between physically close animals. Proximity-based analyses have been used extensively to study human behavior (e.g. using cellphone data) and are typically referred to as \textit{reality mining} \cite{eagle:pentland:2006}.
Applications to animals are quite new, but promise important insights into behavioral patterns without interfering with the animals' environment - a problem plaguing traditional studies where more intense human monitoring introduces an increasing sampling bias \cite{davis:crofoot:farine:2018}. 
Structural changes in primate interactions are important to understand because they have been associated with damage to the habitat or the introduction of pathogens (both often unintentional; \cite{capitanio:2012}).

We analyze the data of 13 baboons equipped with wearable sensors,
collected between 13 June and 10 July 2019. The sensors report
on interactions between two individuals if they are close to each other, with a recording resolution of $20$s. We aggregate the total duration of interactions between individuals over $30$-minute periods into a symmetric matrix. On the diagonal, we report total interaction time. Most social interactions are relatively short, lasting less than a minute, making interactions that overlap aggregation periods negligible. We used the first $4$ days of data, $192$ time points, to center the remaining matrices.

\begin{figure}[h!]
	\centering
	\begin{subfigure}{0.49\textwidth}
		\centering
		\includegraphics[width=\textwidth]{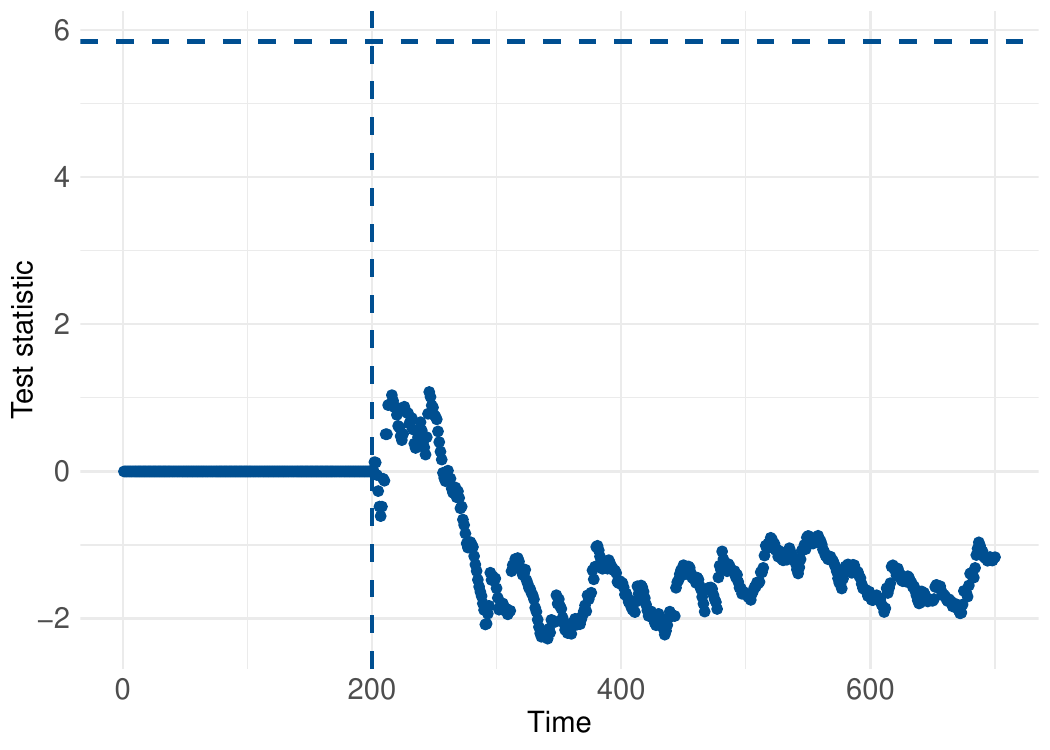}
		\caption{Original data (no signal).}
	\end{subfigure}
	\begin{subfigure}{0.49\textwidth}
		\centering
		\includegraphics[width=\textwidth]{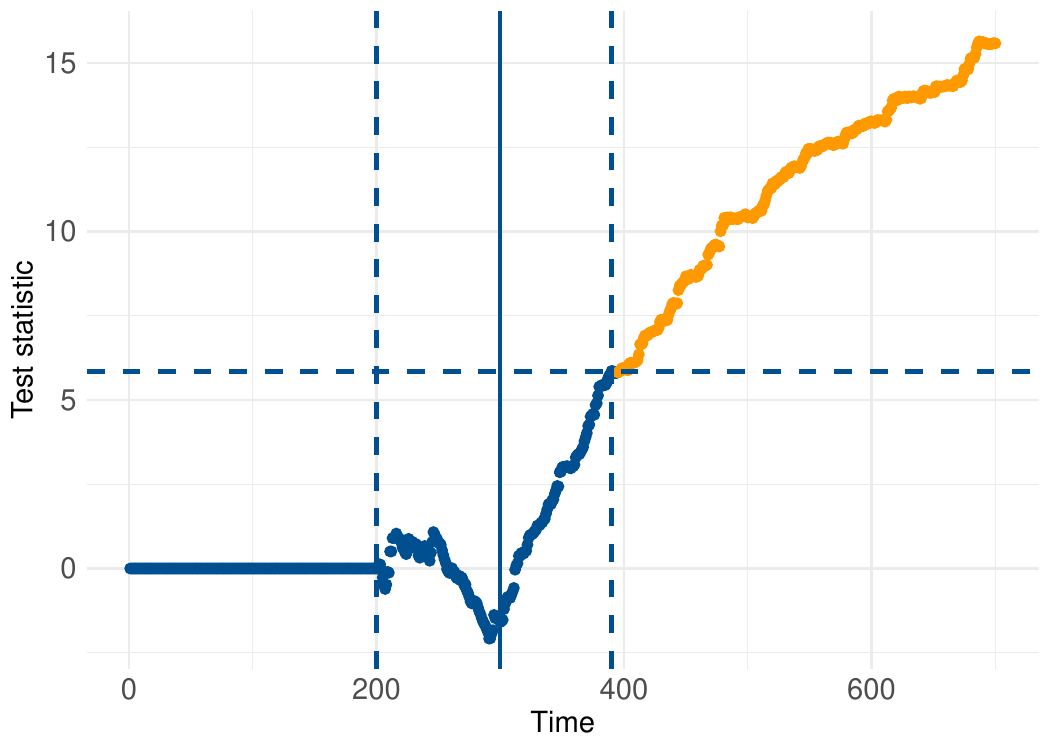}
		\caption{Data with a detectable signal  added.}
	\end{subfigure}
	\caption{The detector $\Gamma_{n,m}(k)$ and the visualization of the detection
		process for the original data without a signal(left) and for an
		artificially included signal (right). The horizontal dashed line corresponds to
		$q_{0.95} = 5.85$. The solid line represents the time of signal insertion.
		The dashed horizontal lines represent, first the end of the training period
		and then, the time of the first signal detection.}
	\label{fig:power2}
\end{figure}

\begin{figure}[h!]
	\centering
	\begin{subfigure}{0.49\textwidth}
		\centering
		\includegraphics[width=\textwidth]{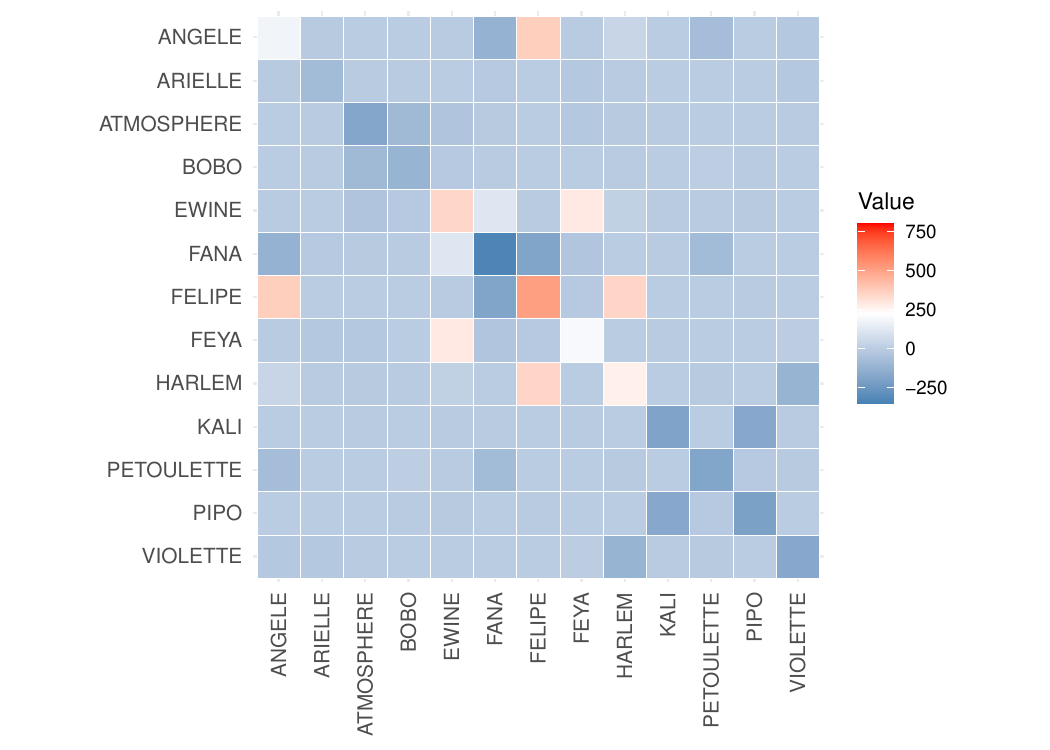}
	\end{subfigure}
	\begin{subfigure}{0.49\textwidth}
		\centering
		\includegraphics[width=\textwidth]{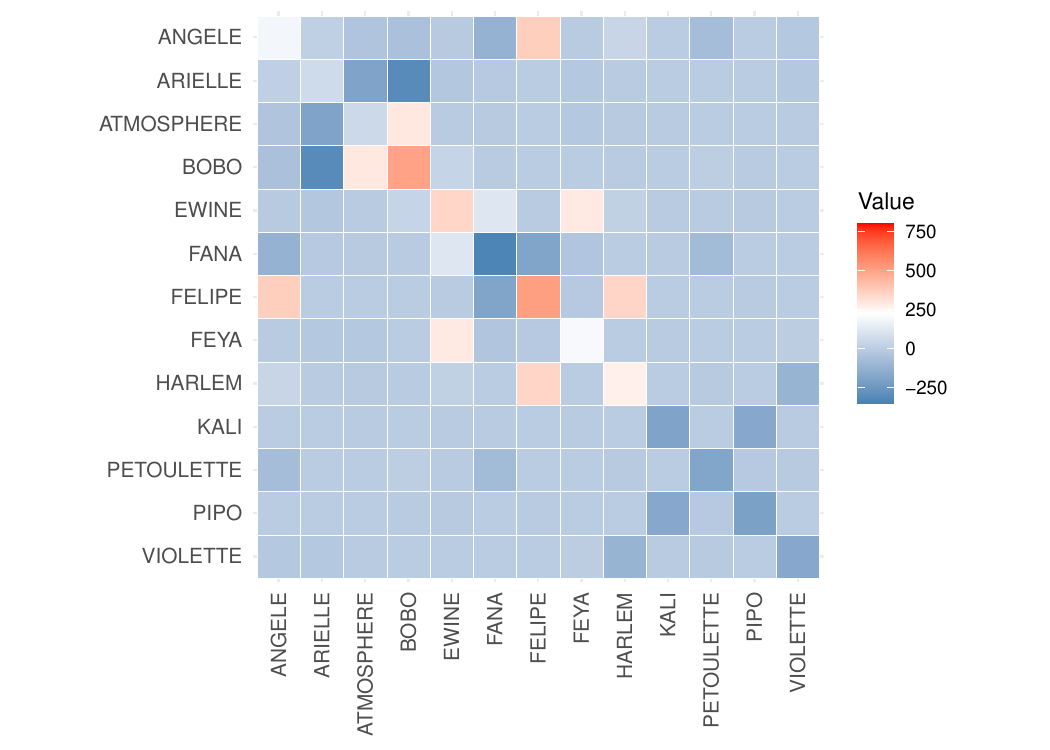}
	\end{subfigure}
	\caption{Example of original data matrix (terminal time $700$) without signal added (left) and with signal added (right). Names of individuals are taken from the original study.}
	\label{fig:matrix}
\end{figure}

Afterwards, we considered a time series of $700$ matrices for our statistical approach to monitor for unusual behavior patterns, using a training set of size $m=200$. During the observation period, social interactions in the baboon group were fairly stable, and no major breaks were observed by other observational methods than sensors \cite{gelardi:godard:paleressompoulle:claidiere:barrat:2020}. Thus, we expect not to detect any significant changes with our monitoring method either. In Figure \ref{fig:power2} (left), we display the values of our test statistic (time is given in the number of half-hour increments) and see that the statistic never crosses the $95\%$ quantile, depicted by a horizontal dashed line. Next, we introduce a synthetic
signal to see whether we can detect it. The signal is introduced at $k=100$ (time $300$ in the graphs) and uses a vector of the form
$\bx= (x_1,...,x_5,0,0, \cdots,0)$, where $(x_1,...,x_5)$ is uniformly distributed on the $5$-dimensional unit sphere. This corresponds to a change of interaction patterns in a subgroup. Different choices of the group size (anywhere between $2$ to $13$) did not affect the outcomes systematically.
The signal strength is given by $s =10^3$.
This signal strength is very moderate as illustrated in Figure \ref{fig:matrix}.
Signal directions $\bfx$ are i.i.d. after the change has occurred.
In Figure \ref{fig:power2}
(right), we display the statistic for the scenario of an added signal
(vertical solid line), which is detected less than two days after the structure of the social group has changed (dashed vertical line). Unreported simulations show that larger signals lead to faster detection.

\section{Proofs of theoretical results} \label{s:p}
We begin with a general remark that addresses properties that will be useful in
subsequent arguments.

\begin{remark} \label{rem:early} We will find it useful to consider a variety
	of partial sum processes, over a variety of slightly modified random variables.
	For their definition, recall the definition of the set
	$\Lambda_t$ in \eqref{eq_def_Lambda_t} and the fact, proved in
	Lemma \ref{lem_p_t} below,  that for any $\varepsilon>0$,
	we have $p_{m^{D_1}} \lesssim m^{-D_2}$,  where
	\begin{align} \label{x1}
		p_{m^{D_1}}:= \mathbb{P}\bigg( \bigcup_{t=1}^{m^{D_1}} \Lambda_t^c\bigg).
	\end{align}
	Here, $D_1, D_2$ can be arbitrarily large. Define
	\begin{align} \label{e:defYt}
		Y_t:=&n^{2/3}  \big( \lambda_t\mathbb{I}\{\Lambda_t\} - b_t\big),\quad b_{t} := \frac{\mathbb{I}\{\Lambda_t\}\mathbb{E}[\lambda_t\mathbb{I}\{\Lambda_t\}]}{\mathbb{P}\{\Lambda_t\}} = \frac{\mathbb{I}\{\Lambda_t\}\mathbb{E}[\lambda_1\mathbb{I}\{\Lambda_1\}]}{\mathbb{P}\{\Lambda_1\}} .
	\end{align}
	The last identity above  holds under the null hypothesis.
	Now recall the definition of $b^{(n)}$ in \eqref{eq_def_b_n}.
	As a consequence of \eqref{x1}, it follows that
	\[
	n^{2/3}(\lambda_t-b^{(n)})=Y_t \qquad \forall \ 1 \le t \le m^{D_1}
	\]
	with probability converging to $1$.
	We call this the \textbf{early replacement property}. In particular, it entails
	\begin{align}\label{e:defPm'}
		P_m(x) = P_m'(x), \qquad
		P_m'(x):= \frac{1}{\sqrt{m \tau_n}}\sum_{t=1}^{\lfloor mx\rfloor}Y_t,
	\end{align}
	simultaneously for all $x \in [0,\lfloor m^{D_1-1}\rfloor ]$ and for any $D_1$ with probability decaying as $ m^{-D_2}$. In turn, this entails directly that
	\begin{align}\label{e:earlrep}
		\pi(\{P_m(x): x \in [0,\lfloor m^{D_1-1}\rfloor ]\},\{P_m'(x): x \in [0,m^{D_1-1}]\}) \lesssim m^{-D_2}.
	\end{align}
\end{remark}

\subsection{Proof of Theorem \ref{lem_3}} \label{ss:p-Gauss}
\textbf{Preliminaries} Throughout this proof, we will encounter a number
of  constants that for technical reasons have to be chosen sufficiently
close to $0$. We will denote them by $\varepsilon, \varepsilon_1, \varepsilon_2,\ldots$.
Often,  these numbers depend on each other and we make this clear,
when we introduce them for the first time. Recall, e.g. the number $\varepsilon>0$
from the definition of the event $\Lambda_t$ in \eqref{eq_def_Lambda_t}.
Using Assumption \ref{ass_2}, we have
\[
n^\varepsilon \lesssim m^{\varepsilon_1}, \qquad \textnormal{where} \quad \varepsilon_1 = \varepsilon_1(\varepsilon).
\]
A property of these small constants,
say $\varepsilon_K = \varepsilon_K(\varepsilon, \varepsilon_1,\ldots,\varepsilon_{K-1})$
will be that if all of the parameters they depend on are sufficiently small,
they can be made arbitrarily small themselves, or mathematically:
For any $\delta_1>0$ there exists a $\delta_2>0$, s.t. $\varepsilon_K$
can be chosen to be $\varepsilon_K<\delta_1$ if $\varepsilon, \varepsilon_1,\ldots,
\varepsilon_{K-1}<\delta_2$. We see this for $\varepsilon_1$,
which by Assumption \ref{ass_2} can be chosen as any value
$\varepsilon_1 \ge \varepsilon/\theta$.
Throughout the first part of this proof, we will consider a constant
\[
T = T_m \asymp m^{1/2-\rho}
\]
for some $\rho \in (0,1/2)$ that  is eventually chosen to depend on all of
the $\varepsilon$-numbers and can be arbitrarily small, for sufficiently
small $\varepsilon$s. Without loss of generality, we assume
that $T$ is such that always $Tm \in \mathbb{N}$.

\textbf{Variables and processes}  Recall the definition of the process $P_m'$ in
\eqref{e:defPm'}. For most of this proof we will study the process $P_m'$, and
only later make the connection to $P_m$.
We notice that random variables $Y_t$
defined in  \eqref{e:defYt} are measurable transforms
of the original time series $(\mathbf{W}_{n,t},
\mathbf{x}_{n,t},s_t)_{t \in \mathbb{Z}}$,  and hence $\phi$-mixing
with coefficients satisfying Assumption \ref{ass_3}, part iii).
We also notice that $Y_t$ is bounded. To see this, a small calculation is sufficient,
where in the first step, we add and subtract $2 \sigma \mathbb{I}\{\Lambda_t\}$,
in the second step use the definition of $\Lambda_t$, given in \eqref{eq_def_Lambda_t}, in the third step use that eventually $\mathbb{P}(\Lambda_t) \ge p_{Tm}\ge 2/3$ for sufficiently large $n$ (again Lemma \ref{lem_p_t}) and in the final step exploit the relation $n \lesssim m^{\varepsilon_1}$, giving us
\begin{align} \label{e:bYt}
	|Y_t| \le n^{2/3}|\lambda_t-2\sigma|\mathbb{I}\{\Lambda_t\}
	+ \frac{\mathbb{I}\{\Lambda_t\}\mathbb{E}[n^{2/3}|\lambda_t-2\sigma|\mathbb{I}\{\Lambda_t\}]}{\mathbb{P}\{\Lambda_t\}} \le n^\varepsilon + \frac{n^\varepsilon}{\mathbb{P}\{\Lambda_t\}} \le 3 n^\varepsilon\lesssim m^{\varepsilon_1}.
\end{align}
Moreover, it is clear that $\mathbb{E}Y_t=0$.
In the following, we define the variances
\begin{align} \label{eq_def_tau}
	\tau_{k,n}=\mathbb{E}\bigg(\frac{1}{\sqrt{k}}\sum_{t=1}^k Y_t \bigg)^2 \qquad \tau^{(n)}:= \sum_{t \in \mathbb{Z}}\mathbb{E}[Y_t Y_0].
\end{align}
From \eqref{eq_def_tau}, we get that the normalizing constant
$\tau_n$ can be written as as $\tau_n :=\tau_{Tm,n}$.
Both variances are finite. For $\tau_{k,n}$ this is clear,
since the variables $Y_t$ are bounded and for $ \tau^{(n)}$ this
boundedness, together with the decay of the mixing coefficients in
Assumption \ref{ass_3}, part i) implies that $ \tau^{(n)}<\infty$.
To be precise, we have
\begin{align} \label{e:intau(n)}
	\sum_{t \in \mathbb{Z}}|\mathbb{E}[Y_t Y_0]| \le \sum_{t \in \mathbb{Z}} 4 \phi(t) m^{2\varepsilon_1} \lesssim m^{2\varepsilon_1} \sum_{t \in \mathbb{Z}}  \phi(t),
\end{align}
where we have used \eqref{e:bYt} and an inequality for centered
mixing random variables $X,Y$ with mixing coefficient $\phi(X,Y)$:
If $X,Y$ are bounded by constants $C_X, C_Y$, then
$|\mathbb{E}[XY]| \le 4 \phi(X,Y) C_XC_Y$
(see Lemma 3.9 in \cite{dehling:mikosch:sorensen:2002}).
Next, we define the modified partial sum process (with $\lfloor Tm x \rfloor$ summands)
\begin{align} \label{e:defP''}
	P_m''(x) = \sqrt{m/(Tm)} \,\,P_m'(xT), \qquad x \in [0,1].
\end{align}
We also introduce the rescaled version $P_m''(v_m(x))$, where
\[
v_m(x):= \frac{1}{Tm }\min\{1 \le k \le Tm: k \tau_{k,n} \ge x Tm \tau_{Tm,n}\}.
\]
The function $v_m$ transforms the partial sum process and accounts for the evolving
variance $\tau_{k,n}$ along $k$ - if all $\tau_{k,n}$ (for all $k$) where equal,
it would reduce to the identity. Therefore, handling $v_m(x)$ involves showing
that the variances $\tau_{k,n}$  are (for all $k$ not too small) close to
a fixed value $\tau^{(n)}$ and we provide more details below.
Finally, we define the linearized version $(P_m'')^{lin}$  of $P_m''$
that linearly interpolates the points
\[
\left\{ \left( (k/(Tm)), P_m'' (k / (Tm) ) \right) : k=1,\ldots, Tm \right\}.
\]

We will now verify  three statements, where $c_T = o(T^{-1/2})$ is a sequence
of positive numbers, and then conclude the proof using them.
\begin{enumerate}[label=\alph*)]
	\item \label{part_a}
	$ \pi(\{P_m''(x): x \in [0,1]\}, \{P_m''(v_m(x)): x \in [0,1]\})=\mathcal{O}(c_T) $.
	\item \label{part_b}
	$\pi(\{P_m''(v_m(x)): x \in [0,1]\}, \{B(x): x \in [0,1]\})=\mathcal{O}(c_T)$.
	\item \label{part_c}
	$ \pi(\{P_m''(x): x \in [0,1]\}, \{(P_m'')^{lin}(x): x \in [0,1]\})=\mathcal{O}(c_T) $.
\end{enumerate}

\textbf{Proof of \ref{part_a}} We decompose the index set $[0,1]$ of the processes involved into $x \le m^{-1/2+\varepsilon_3}$ and $x>m^{-1/2+\varepsilon_3}$, where $\varepsilon_3$ is a sufficiently small number. First, we notice that $v_m(x) \ge m^{-1/2+\varepsilon_3}$ for all $x>m^{-1/2+\varepsilon_3}$. For this purpose, notice that $v_m$
is monotonically increasing and recall the defining inequality in $v_m(x)$. Here, consider the choice $k=m^{1/2+\varepsilon_3}$ and $x=m^{-1/2+\varepsilon_3}$. Then we get
\begin{align} \label{e:kineq}
	k \tau_{k,n} \ge x \,Tm \,\tau_{Tm,n} \quad \Leftrightarrow \quad m\ge Tm \,\frac{\tau_{Tm,n} }{\tau_{k,n}}.
\end{align}
A consequence of our below derivation in   \eqref{e:defam} is that $\tau_{Tm,n}/\tau_{k,n}=1+o(1)$. Since $T \to \infty$ as $m \to \infty$, it follows that the above inequality \eqref{e:kineq} is never satisfied for all sufficiently large $m$. Consequently, $v_m(x) \ge k/(Tm )$ and we conclude that $v_m(x) \ge m^{-1/2+\varepsilon_3}.$\\
Now, we begin the comparison of $v_m(x)$ and $x$ for values
$x \ge m^{-1/2+\varepsilon_3}$. Therefore, we notice that by definition of $v_m$ and the inequality $v_m(x) \ge m^{-1/2+\varepsilon_3}$
\begin{align*}
	& |v_m(x) - \lfloor x Tm\rfloor/( Tm ) | \le \bigg|v_m(x) -  \frac{x \tau_{Tm,n}}{\tau_{v_m(x)Tm,n}}\bigg| +
	\bigg|\frac{x \tau_{Tm,n}}{\tau_{v_m(x)Tm,n}} - \frac{\lfloor x Tm  \rfloor}{ Tm } \bigg|\\
	\le & \frac{1}{ Tm } + \bigg|\frac{x \tau_{Tm,n}}{\tau_{v_m(x)Tm,n}} - \frac{\lfloor x  Tm  \rfloor}{Tm } \bigg| \le \frac{2}{ Tm } + \bigg|\frac{x \tau_{Tm,n}}{\tau_{v_m(x)Tm,n}} - x \bigg| \\
	\le &\frac{2}{Tm } + \sup_{m^{-1/2+\varepsilon_3} \le x \le 1 }\Big|\frac{\tau_{Tm,n}}{\tau_{\lfloor Tm x\rfloor ,n}}-1\Big|.
\end{align*}
Now, we use Lemma \ref{lem_var} for $k \ge m^{1/2+\varepsilon_3}$, which yields
\begin{align}\nonumber
	& \sup_{m^{-1/2+\varepsilon_3} \le x \le 1 }\bigg| \frac{\tau_{Tm,n}}{\tau_{\lfloor Tm x\rfloor ,n}} - 1\bigg| \le \sup_{k \ge m^{1/2+\varepsilon_3}}\frac{ |\tau_{Tm,n}-\tau_{k,n}|}{\tau_{k,n}}
	\\[0.5ex]
	\le & \sup_{k \ge m^{1/2+\varepsilon_3}}\frac{ |\tau_{Tm,n}- \tau^{(n)}|+|\tau^{(n)}-\tau_{k,n}|}{\tau^{(n)}-|\tau^{(n)}-\tau_{k,n}|} \nonumber
	\\[0.5ex]
	\lesssim &\sup_{k \ge m^{1/2+\varepsilon_3}}\frac{n^{2\varepsilon}k^{-1+\varepsilon_2}}{\tau^{(n)} + \mathcal{O}(n^{2\varepsilon}k^{-1+\varepsilon_2} )} \lesssim m^{(-1+\varepsilon_2)
		(1/2+\varepsilon_3)+
		2\varepsilon_1 } =:a_m.\label{e:defam}
\end{align}
In the first inequality, we have used that $\lfloor Tm x \rfloor \ge m^{1/2+\varepsilon_3}$. In the second step, we have used for the numerator that $\tau^{(n)}\lesssim n^{2\varepsilon}$ (see   \eqref{e:intau(n)}) and the concentration of variances from Lemma \ref{lem_var} ($\varepsilon_2$ is the constant $\eta$ in that lemma and can be made arbitrarily small). We have used the latter result again for the denominator, to replace $\tau_{k,n}$ by $\tau^{(n)}$.
In the third step, we can again treat numerator and denominator separately: For the denominator, we have used that $\tau^{(n)}$ is bounded away from $0$ (Assumption \ref{ass_3}, part ii)). So, the entire denominator is bounded away from $0$, since for small enough $\varepsilon, \varepsilon_1,\varepsilon_2,\varepsilon_3$ and $k\ge m^{1/2+\varepsilon_3}$
\[
n^{2\varepsilon}k^{-1+\varepsilon_2} \lesssim m^{2\varepsilon_1} m^{-(1-\varepsilon_2)(1/2+\varepsilon_3)} =o(1).
\]
For the numerator we also use the above inequality
\[
n^{2\varepsilon}k^{-1+\varepsilon_2} \lesssim m^{2\varepsilon_1} m^{-(1-\varepsilon_2)(1/2+\varepsilon_3)}=a_m,
\]
yielding the desired result. Thus, our above arguments establish for some large enough constant $C_v>0$ and for all $m$ sufficiently large that
\begin{align} \label{e:bv_m}
	\sup_{m^{-1/2+\varepsilon_3}\le x \le 1}|v_m(x) - \lfloor x Tm \rfloor/(Tm ) |  \le C_v a_m.
\end{align}
In the following, we will not always mention the requirement of $m$ being "large enough", to shorten our arguments.
Now, we have to introduce three constants: First let $q_0>2$ be a constant that later has to be chosen sufficiently large and consider values of $q>q_0$. It follows that
uniformly for $x>m^{-1/2+\varepsilon_3}$ that
\begin{align} \label{e:Pnb1}
	&  \{\mathbb{E} \sup_{x>m^{-1/2+\varepsilon_3}}|P_m''(x)- P_m''(v_m(x))|^q\}^{1/q}\le\bigg\{\mathbb{E} \sup_{|k-j| \le  C_v ( Tm ) a_m } \bigg|\frac{1}{\sqrt{ Tm}}\sum_{t=j+1}^k Y_t \bigg|^q \bigg\}^{1/q}\\
	\lesssim & ((Tm)^2 a_m)^{1/ q} \sup_{|k-j| \le  C_v ( Tm ) a_m }\bigg\{\mathbb{E}  \bigg|\frac{1}{\sqrt{ Tm}}\sum_{t=j+1}^k Y_t \bigg|^{q} \bigg\}^{1/q} \nonumber\\
	\lesssim  &\frac{m^{\varepsilon_4}(Tm)^{2/q_0}  a_m^{1/q_0} (Tm a_m)^{1/2}}{(Tm)^{1/2}} = a_m^{1/2} \{m^{\varepsilon_4}(Tm)^{2/q_0}  a_m^{1/q_0} \}\lesssim m^{\varepsilon_5} a_m^{1/2}.\nonumber
\end{align}
In the first inequality, we have used the definition of the partial sum process $P_m''$ (see   \eqref{e:defP''}), the fact that by construction $P_m''(x) = P_m''(\lfloor x Tm \rfloor/(Tm))$ and the inequality \eqref{e:bv_m}.
In the second inequality, we have used Lemma 2.2.2. from \cite{vaart:wellner:1996}, which is a standard way of bounding the expectation of the maximum of random variables. In the third inequality, we have used Theorem 1 in \cite{yoshihara:1978}, which is an inequality for mixing random variables. Its application is rather simple, because the random variables involved $Y_t$, are uniformly bounded by $3 n^\varepsilon \lesssim m^{\varepsilon_1}$. Moreover, we have exploited that $ q >q_0$ and that $(Tm)^{2}  a_m \to \infty$, which implies together that eventually $[(Tm)^{2}  a_m]^{1/ q} \le [(Tm)^{2}  a_m]^{1/ q_0}$.
Finally, for the last inequality, we have used that for $q_0$ large enough, that $m^{\varepsilon_4}(Tm)^{2/q_0}  a_m^{1/q_0} \lesssim m^{\varepsilon_5}$ with a value of $\varepsilon_5 = \varepsilon_5(\varepsilon_1,\ldots,\varepsilon_4, q_0)$ that can be made arbitrarily small. Making $\varepsilon_1,\ldots,\varepsilon_5$ sufficiently small, we get
get $m^{\varepsilon_5} a_m^{1/2} = m^{-1/2+\varepsilon_6}$ and we can choose   $\varepsilon_6= \varepsilon_6(\varepsilon_1,\ldots,\varepsilon_5)$ as small as we want (to see this recall the definition of $a_m$ in  \eqref{e:defam}).
\\
Next, we consider
$|P_m''(x)- P_m''(v_m(x))| $ for $x \le m^{-1/2+\varepsilon_3}$. Recalling \eqref{e:bv_m}, we get by plugging in $x = m^{-1/2+\varepsilon_3}$
that
\[
v_m(m^{-1/2+\varepsilon_3}) \le m^{-1/2+\varepsilon_3} + C_v a_m \le 2m^{-1/2+\varepsilon_3}
\]
for all $m$ sufficiently large, such that
$C_v a_m \le m^{-1/2+\varepsilon_3}$ (this holds eventually for a sufficiently small choice of $\varepsilon_2$). This means, that we can use the following inequality
\begin{align*}
	\sup_{0 \le x \le  m^{-1/2+\varepsilon_3}}|P_m''(x)- P_m''(v_m(x))| \le 2 \sup_{0 \le  x \le  2 m^{-1/2+\varepsilon_3}}|P_m''(x)|.
\end{align*}
We can now combine Theorem 3.1 in \cite{moricz:serfling:stout:1982} (a maximum inequality for partial sums) with Theorem 1 in \cite{yoshihara:1978} (a moment bound for sums of mixing variables; such a bound is required for the application of \cite{moricz:serfling:stout:1982}) and we obtain
\[
\mathbb{E}[\sup_{0 \le x \le 2 m^{-1/2 + \varepsilon_3}}|P_m''(x)|^q]^{1/q} \lesssim n^{\varepsilon}  m^{-1/4+\varepsilon_3}.
\]
We can further upper bound the right side, using that $T\asymp m^{1/2-\rho}$ and that $n^\varepsilon \lesssim m^{\varepsilon_1}$,
yielding
\[
n^{\varepsilon}  m^{-1/4+\varepsilon_3} \lesssim \frac{m^{\varepsilon_1 +\varepsilon_3 +(1/2-\rho)/2-1/4}}{T^{1/2}
} = \frac{m^{\varepsilon_1+\varepsilon_3 -\rho/2}}{T^{1/2}}
=\frac{m^{-\varepsilon_7}}{T^{1/2}},
\]
where $\varepsilon_7 := -(\varepsilon_1+\varepsilon_3 -\rho/2$). Notice that $\varepsilon_7$ can be chosen to be positive by making $\varepsilon_1$ and $\varepsilon_3$ sufficiently small compared to $\rho$. We now combine this bound with \eqref{e:Pnb1},
where we notice that the right-hand - side can be upper bounded by
\[
m^{\varepsilon_5}a_m^{1/2} = m^{\varepsilon_5 +(-1+\varepsilon_2)
	(1/2+\varepsilon_3)+
	2\varepsilon_1 } \lesssim m^{-\varepsilon_8-(1/2-\rho)/2} \lesssim \frac{m^{-\varepsilon_8}}{T^{1/2}},
\]
for some $\varepsilon_8= \varepsilon_8(\varepsilon,\varepsilon_1,\ldots,\varepsilon_5)$, which can be made arbitrarily small.
So, we obtain
\begin{align*}
	\{\mathbb{E} \sup_{x \in [0,1]}|P_m''(x)- P_m''(v_m(x))|^q\}^{1/q}\lesssim \frac{m^{-\min(\varepsilon_7, \varepsilon_8)}}{T^{1/2}}
\end{align*}
for  any $q>q_0$. Now, we may increase $q$ further so that
\[
\frac{q \min(\varepsilon_7,\varepsilon_8)}{2}>\frac{1}{2}-\rho-\frac{\varepsilon}{2}
\]
holds. We recall that the $\varepsilon$s do not depend on $q$
and hence such an enlargement is possible (some $\varepsilon$s depend on $q_0$, which is merely a lower bound for $q$).
Then, this gives us the following result:
\begin{align*}
	& \mathbb{P}( \sup_{x \in [0,1]}|P_m''(x)- P_m''(v_m(x))|>T^{-1/2}m^{-\min(\varepsilon_7,\varepsilon_8)/2}) \\
	\le & \frac{\mathbb{E} \sup_{x \in [0,1]}|P_m''(x)- P_m''(v_m(x))|^q}{T^{-q/2}m^{-q\min(\varepsilon_7,\varepsilon_8)/2}} \lesssim m^{-q\min(\varepsilon_7,\varepsilon_8)/2} \lesssim T^{-1/2}m^{-\min(\varepsilon_7,\varepsilon_8)/2}.
\end{align*}
Setting
\begin{align} \label{e:defc}
	c_T := T^{-1/2}m^{-\min(\varepsilon_7,\varepsilon_8)/2}
\end{align}
completes the verification of  claim  \ref{part_a}.

\textbf{Proof of \ref{part_b}} Our argument rests on Theorem 2.3 in
\cite{hafouta:2023}. In that theorem, various constants appear and we will
explain how they need to be chosen in our context. We will therefore go through
the constants in the results of \cite{hafouta:2023}, using the notation of that paper.
We only make one adjustment regarding notation of the cited results:
The sample size in named paper is denoted by $n$. Since, we want to avoid
confusion with the $n$ from our work, we denote the sample size in the results of
\cite{hafouta:2023}  by $N$,  and notice that in our context we have to choose
$N= Tm$. Then, the variance $\sigma_N^2$ in that paper, is
\begin{align} \label{eq_def_sigma_N}
	\sigma_N^2 := Tm\tau_{Tm,n}.
\end{align}
Next, we need to consider the constant $A_N$ introduced in Section 2.1.1 of the cited paper, which is defined as
\begin{align} \label{eq_def_A_N}
	A_N = 18r_N(1+K_{2,N}).
\end{align}
Here, $K_{p,N} := \{\mathbb{E}|Y_t|^p\}^{1/p}$ and we get from \eqref{e:bYt}
\begin{align} \label{a0}
	K_{p,N} \le K_{\infty,N} := \mathbb{E}\sup_\omega |Y_t(\omega)|  \le 3n^{\varepsilon} \lesssim m^{\varepsilon_1}.
\end{align}
Next, $r_N$ defined in that paper is a quantity depending on the mixing coefficients. It is defined rather implicitly, and hence we only define an  upper bound $\tilde r_N$ for it here. We define $\tilde r_N$ as the smallest number $x$, such that $\tilde \Gamma_{n,m}(x) \le (8  K_{\infty,N})^{-1}$, where
\[
\tilde \Gamma_{n,m}(x) := \sum_{k >x} \phi(k)^{1/p-1/q},
\]
for some  $p>q>2$. The numbers $p,q$ here are user-determined (and $q$ has nothing to do with the letter $q$ from part \ref{part_a} of this proof).
We choose both $q$ and $p$ as very large numbers, with $q>>p$, to be precisely  determined later. Since  $\phi(k) \le C a^{-k}$ is exponentially decaying (from Assumption \ref{ass_3}), it follows that, for any fixed $q>p>2$, the function $\tilde \Gamma_{n,m}(x) $ is exponentially decaying. Moreover, we have for some large enough constant $C_1$
\[
(8 C_1 m^{\varepsilon_1})^{-1}\le  (8  n^\varepsilon)^{-1} \le (8  K_{\infty,N})^{-1}.
\]
Combining these insights, we obtain that for any $\varepsilon_9>0$, no matter how small, that we have eventually
$\tilde r_N \lesssim m^{\varepsilon_9}$.
As a consequence, we have for $A_N$ defined in \eqref{eq_def_A_N}
\begin{align} \label{a1}
	A_N \lesssim \tilde r_N (1+K_{\infty,N})   \lesssim m^{\varepsilon_1+\varepsilon_9}.
\end{align}
The next constant in Theorem 2.3 of \cite{hafouta:2023} is $\beta_N$. This constant is implicitly defined as well and we do not repeat its definition here. Rather, we directly proceed to an upper bound, which is discussed in Section 2.2 of the same paper.
There, we see that
\begin{align} \label{a2}
	\beta_N \lesssim q (1+K_{\infty,N}) \lesssim q (1+m^{\varepsilon_1})
	\lesssim m^{\varepsilon_1}.
\end{align}
Now, we can move to the Theorem 2.3 in the paper again and choose
$l_N \asymp \sigma_N$, which  entails
\begin{align*}
	&\pi(\{P_m''(v_m(x)): x \in [0,1]\}, \{B(x): x \in [0,1]\})\\
	\lesssim &(1+K_{q,N})A_N \beta_N\bigg(\sigma_N^{\frac{-p-2}{2p}}
	|\log( \sigma_N)|^{\frac{3}{4}} + \mathfrak{q}_N^{\frac{p}{2p+4}}
	|\log \mathfrak{q}_N|^{\frac{1}{2}}\bigg) =: (1+K_{q,N})A_N \beta_NB_N,
\end{align*}
where $\mathfrak{q}_N := l_N^{\frac{1}{2}} \sigma^{-1}_N + l_N
\sigma_N^{-2(1-2/p)} + l_N^{-1/2}$,
and $B_N$ in the final equation is defined in the obvious way.
If we make $p$ big enough, we get that
\[
\mathfrak{q}_N \lesssim \sigma_N^{-1/2+\varepsilon_{11}},
\]
where $\varepsilon_{11}$ only depends on $p$ and can be made arbitrarily small.
Now, for $p$ large enough, we get for some small $\varepsilon_{12}
=\varepsilon_{12}(\varepsilon_{11})$ that
\begin{align} \label{a3}
	B_N \lesssim \sigma_N^{-(1/2-\varepsilon_{12})} \lesssim (Tm)^{-(1/4-\varepsilon_{12}/2)},
\end{align}
where we used the definition of $\sigma_N$ in \eqref{eq_def_sigma_N} and the fact that $\tau_{Tm,m}$ is bounded away from $0$ (similarly to \eqref{part_a}).
By our above considerations in \eqref{a0}, \eqref{a1}, \eqref{a2} and \eqref{a3}, we get
\[
(1+K_{q,N})A_N \beta_NB_N \lesssim m^{\varepsilon_{13}} (Tm)^{-(1/4-\varepsilon_{12}/2)}.
\]
Making $\varepsilon_{12}, \varepsilon_{13}$ sufficiently small and using that $T\asymp m^{1/2-\rho}$ entails that the right side is of size
$\lesssim T^{-3/4}$. Recalling the definition $c_T = T^{-1/2}m^{-\min(\varepsilon_7,\varepsilon_8)/2}$ in \eqref{e:defc}, we get for small enough $\varepsilon_7,\varepsilon_8$ that
$T^{-3/4}\lesssim  c_T$, and we have shown the desired result in   \ref{part_b}.\\

\textbf{Proof of \ref{part_c}} This part is the easiest of the entire proof.
By definition of $(P_m'')^{lin}$,
\[
\sup_{x \in [0,1]}|(P_m'')^{lin}(x)-P_m''(x)|
\le \sup_t \frac{Y_t}{\sqrt{Tm \tau_n}}
\lesssim
\frac{m^{\varepsilon_1}}{(Tm)^{1/2}} \le c_T,
\]
where we used \eqref{e:bYt} and again the fact that $\tau_n$ is bounded away from $0$ (see proof of part \ref{part_a}).
Since this result is deterministic, in particular the bound from \ref{part_c} follows.

\textbf{Concluding argument} We can now proceed to the final step of the proof.
Using the triangle inequality for the Prokhorov metric, we conclude from \ref{part_a} and \ref{part_b} that
\begin{align*}
	& \pi(\{P_m''(x): x \in [0,1]\}, \{B(x): x \in [0,1]\})=\mathcal{O}(c_T).
\end{align*}
Together with \ref{part_c} and again the triangle inequality, we get
\[
\pi(\{(P_m'')^{lin}(x): x \in [0,1]\}, \{B(x): x \in [0,1]\})=\mathcal{O}(c_T).
\]
Now, since both $(P_m'')^{lin}$ and $ B$ live on the Banach space $\mathcal{C}([0,1])$, we can (after possibly changing the probability space) create a coupling such that
\begin{align} \label{e:approx1}
	\mathbb{P}(\sup_{x \in [0,1]}|(P_m'')^{lin}(x)-B(x)|>c_T)<c_T,
\end{align}
where we recall $c_T = o (T^{-1/2})$. The existence- of the named coupling is a
consequence of Lemma \ref{lem:huber} stated in Subsection \ref{ss:au}.
On that probability space, we can use $(P_m'')^{lin}$ to create the variables $Y_1, Y_2,\ldots$ as increments of the process and therewith also construct the non-linearized partial sum process $P_m''$.
We have deterministically $Y_t \lesssim n^\varepsilon$ for all $t$,
which implies by the same reasoning as in part \ref{part_c} of the proof that
\begin{align} \label{e:approx2}
	\sup_{x \in [0,1]}|(P_m'')^{lin}(x) - P_m''(x)|
	\lesssim \frac{m^{\varepsilon_1}}{(Tm)^{1/2}} \lesssim c_T.
\end{align}
Now, rescaling the process, we can consider $P_m'(x) = \sqrt{T} \,\,P_m''(x/T)$ on this probability space, too,  and we can define the scaled version of the Brownian motion  $\tilde B (x):=\sqrt{T}B(x/T)$.
Combining the results in \eqref{e:approx1} and \eqref{e:approx2}, we obtain for some large enough constant $C_0>0$ that
\[
\mathbb{P}\left (\sup_{x \in [0,T]}|P_m'(x)-\tilde B (x)|>C_0 \sqrt{T} c_T \right )<c_T.
\]
Since $C_0 \sqrt{T} c_T=o(1)$, it follows again by
Lemma \ref{lem:huber} (the reverse direction) that
\[
\pi(P_m', \tilde B) =o(1),
\]
this time on the space $\mathcal{C}([0,T])$.
This is a statement about the distributions of $P_m'$ and $\tilde B$ and it in particular holds for $P_m'$ on our original probability space. But then,
since $P_m' = P_m$ with probability going to $1$ (see the early replacement property in Remark \ref{rem:early}) and it follows that
\[
\pi(P_m, \tilde B) \le \pi(P_m',  P_m) + \pi(P_m', \tilde B) =o(1).
\]
Now, let $P_m^{lin}$ be the version of $P_m$ that linearly interpolates the points $\{(k/m, P_m(k/m): k=1,\ldots,Tm\}$. Then, we can show in a final step
\[
\sup_{x \in [0,T]} |P_m^{lin}(x)-P_m(x)| \le \max_t\frac{n^{2/3}|\lambda_t-b_t|}{\sqrt{m\tau_n}}\lesssim \max_t\frac{n^{2/3}|\lambda_t-b_t|}{\sqrt{m}}=\max_t\frac{|Y_t|}{\sqrt{m}}+o_P(1) = o_P(1).
\]
The equality $n^{2/3}|\lambda_t-b_t| = Y_t+o_P(1)$ is a consequence of the early replacement property (see Remark \ref{rem:early}). We have then used from \eqref{e:bYt} that deterministically $|Y_t| \le n^\epsilon \lesssim m^{\epsilon_1}=o(\sqrt{m})$.
Accordingly, by an application of Lemma \ref{lem:huber},
we obtain for some sufficiently slowly decaying null sequence
null-sequence $(d_m)_{m \in \mathbb{N}}$ that $\pi(P_m^{lin}, P_m) = \mathcal{O}(d_m)$. After possibly making the decay of $d_m$ even slower, we also have $\pi( \tilde B, P_m) = \mathcal{O}(d_m)$ (from our above derivations we know their distance goes to $0$). Then with the triangle inequality the implication
\[
\pi(P_m^{lin}, P_m),  \,\pi( \tilde B, P_m) = \mathcal{O}(d_m) \quad \Rightarrow \quad \pi(P_m^{lin}, \tilde B) = \mathcal{O}(d_m).
\]
Since both of the processes $P_m^{lin}, \tilde B$ exist on the separable space $\mathcal{C}([0,T])$ we can apply
Lemma \ref{lem:huber}  (implication ii) $\Rightarrow$ i)). It states that
we can redefine the processes on an appropriate probability space, such that
\[
\mathbb{P}(\sup_{x \in [0,T]}|P_m^{lin}(x)- \tilde B(x)| \le d_m) \ge 1-d_m.
\]
On this space, we can construct from the increments of $P_m^{lin}$ a version of $P_m$ and
\begin{align} \label{e:PPlin}
	\sup_{x \in [0,T]} |P_m^{lin}(x)-P_m(x)| \le \max_t\frac{n^{2/3}|\lambda_t-b_t|}{\sqrt{m\tau_n}}\le d_m
\end{align}
is still satisfied. Thus, we get the desired result
\[
\mathbb{P}(\sup_{x \in [0,T]}|P_m(x)- \tilde B(x)| \le d_m)>1-d_m.
\]
This concludes the proof of Theorem \ref{lem_3}.

\subsection{Proof of Lemma \ref{lem_4}}\label{ss:p-l3}
Recall the definitions of $\Gamma_{n,m}(k)$ and $Y_t$ in \eqref{e:def:gam}
and \eqref{e:defYt}, respectively.
Using the early replacement property in Remark \ref{rem:early}, we get with probability going to $1$ that
\[
\sup_{m^{1+\zeta}\le k \le m^L} \Gamma_{n,m}(k) = \sup_{m^{1+\zeta}\le k \le m^L} \frac{D_m'(k)}{V_m/\sqrt{\tau_n}},
\]
where
\begin{align*}
	D_m'(k):=&\frac{\sqrt{m}}{ \sqrt{\tau_n}\,\,(m+k)}
	\bigg(\sum_{t=m+1}^{m+k}Y_t-\frac{k}{m}\sum_{t=1}^m Y_t\bigg) \\
	=& \frac{\sqrt{m}}{\sqrt{\tau_n} \,\,(m+k)}
	\sum_{t=m+1}^{m+k}Y_t-\frac{k}{\sqrt{\tau_n}(m+k)\sqrt{m}}\sum_{t=1}^m Y_t
	=:  D_{1,m}'(k)-D_{2,m}'(k).
\end{align*}
Here, $D_{1,m}', D_{2,m}'$ are defined in the obvious way. We will now demonstrate that
\begin{align}\label{e:bD1m}
	\sup_{m^{1+\zeta}\le k \le m^L} |D_{1,m}'(k)|=o_P(1).
\end{align}
As a preparatory step, we let $\zeta':= \zeta/(2(1+\zeta))$ and
note that $(1+\zeta)(1/2-\zeta') = 1/2$. Then, we find the following
upper bounds for $k\ge m^{1+\zeta}$:
\begin{align*}
	m+k & = (m+k)^{1/2+\zeta'} (m+k)^{1/2-\zeta'} \ge (m+k)^{1/2+\zeta'} k^{1/2-\zeta'} \ge (m+k)^{1/2+\zeta'} m^{(1+\zeta)(1/2-\zeta')} \\
	&  = (m+k)^{1/2+\zeta'} \sqrt{m}
	.
\end{align*}
Using these insights, we get
\begin{align*}
	& \sup_{m^{1+\zeta}\le k \le m^L} |D_{1,m}'(k)| = \sup_{m^{1+\zeta}\le k \le m^L}\bigg|\frac{\sqrt{m}}{\sqrt{\tau_n} \,\,(m+k)}
	\sum_{t=m+1}^{m+k}Y_t\bigg|
	\\
	\lesssim  &
	\sup_{m^{1+\zeta}\le k \le m^L}\bigg|\frac{\sqrt{m}}{(m+k)^{1/2+\zeta'}\sqrt{m}}\sum_{t=m+1}^{m+k}Y_t\bigg| =
	\sup_{m^{1+\zeta}\le k \le m^L}\bigg|\frac{1}{(m+k)^{1/2+\zeta'}}\sum_{t=m+1}^{m+k}Y_t\bigg| \\
	\le &   \frac{n^\varepsilon}{m^{\zeta'/2}} \sup_{1\le k \le m^L}\bigg|\frac{1}{(m+k)^{1/2+\zeta'/2}}\sum_{t=m+1}^{m+k}\frac{Y_t}{n^\varepsilon}\bigg|
	\le  \frac{n^\varepsilon}{m^{\zeta'/2}} \sup_{k \ge 1}\bigg|\frac{1}{k^{1/2+\zeta'/2}}\sum_{t=1}^{k}\frac{Y_{t+m}}{n^\varepsilon}\bigg| \\
	\overset{d}{=} & \frac{n^\varepsilon}{m^{\zeta'/2}} \sup_{k \ge 1}\bigg|\frac{1}{k^{1/2+\zeta'/2}}\sum_{t=1}^{k}\frac{Y_{t}}{n^\varepsilon}\bigg| ,
\end{align*}
where we used Assumption \ref{ass_2}  for the second-to-last inequality, and the last equality holds because of stationary of the $Y_t$.
Now, the fact that
\[
\sup_{k \ge 1} \frac{1}{k^{1/2+\zeta'/2}} \bigg|\sum_{t=1}^k \frac{Y_t}{n^\varepsilon}\bigg| = \mathcal{O}_P(1)
\]
follows exactly as in the proof of Lemma C.1 of \cite{kutta:jach:kokoszka:2024} and is therefore not repeated here. Since we can make $\varepsilon$ arbitrarily small, we may assume that $\varepsilon<\zeta'/(2\theta)$ and we have
\[
\frac{n^\varepsilon}{m^{\zeta'/2}}  \asymp m^{\varepsilon\theta-\zeta'/2}=o(1).
\]
This shows that \eqref{e:bD1m} holds.

Now, $V_m/\sqrt{\tau_n}$
converges in distribution to an a.s. positive random variable. To see this, notice that $V_m/\sqrt{\tau_m}$ is a continuous transform of the partial sum process $\{P_m(x): x \in [0,1]\}$ and hence, it follows by the continuous mapping theorem that
\begin{align} \label{e:Vmas}
	V_m/\sqrt{\tau_n}=\int_0^1 |P_m(x)-\frac{\lfloor xm\rfloor}{m}P_m(1)|
	dx \overset{d}{\to} \int_0^1 |B(x)-xB(1)|dx.
\end{align}
Combining this with \eqref{e:bD1m} yields
\begin{align} \label{e:Dpres1}
	\sup_{m^{1+\zeta}\le k \le m^L} \Gamma_{n,m}(k)
	= \sup_{m^{1+\zeta}\le k \le m^L} \frac{D_{m}'(k)}{V_m/\sqrt{\tau_n}}+o_P(1)=\sup_{m^{1+\zeta}\le k \le m^L} \frac{-D_{2,m}'(k)}{V_m/\sqrt{\tau_n}}+o_P(1).
\end{align}
Now, using the early replacement property in reverse
(replacing $Y_t$ by $n^{2/3}(\lambda_t-b^{(n)})$ for all $1 \le t \le m^L$,
which is possible probability going to $1$; see Remark \ref{rem:early}), we obtain
\begin{align} \label{e:D2m}
	D_{2,m}'(k) =  \frac{kn^{2/3}}{\sqrt{\tau_n}(m+k)\sqrt{m}}\sum_{t=1}^m [\lambda_t-b^{(n)}] = \frac{k}{m+k}P_m(1)  = P_m(1)+o_P(1).
\end{align}
The above remainder $o_P(1)$ is independent of $k$ and thus the asymptotic equality is uniform over all $k \ge m^L$.
This shows the desired result
\begin{align} \label{e:Gamear}
	\sup_{m^{1+\zeta}\le k \le m^L} \Gamma_{n,m}(k)
	= \sup_{m^{1+\zeta}\le k \le m^L} \frac{-P_m(1)}{V_m/\sqrt{\tau_n}}+o_P(1),
\end{align}
uniformly over all $k$ between $m^{1+\zeta}$ and $m^{L}$.
Next, we consider values of $k>m^L$. Since we can freely choose $L>0$,
we choose it large enough such that
\[
\frac{L}{3}-\frac{1}{2}-2\theta>0.
\]
Similarly as before, we decompose
\begin{align*}
	\Gamma_{n,m}(k) = &\frac{D_m''(k)}{V_m/\sqrt{\tau_n}},\qquad \textnormal{where}\quad D_m'' :=D_{1,m}'' - D_{2,m}'', \quad \textnormal{and}\\
	D_{1,m}''(k):= &\frac{\sqrt{m}}{\sqrt{\tau_n} \,\,(m+k)}
	\sum_{t=m+1}^{m+k}Y_t', \qquad Y_t':= n^{2/3}\big( \lambda_t - \mathbb{E}[\lambda_t]\big), \\
	D_{2,m}''(k):= &
	\frac{k}{\sqrt{\tau_n}(m+k)\sqrt{m}}\sum_{t=1}^m Y_t'.
\end{align*}
Combining Lemma \ref{lem:exp} in Subsection \ref{ss:au} and Remark \ref{rem:early},
we get  $ D_{2,m}''(k) =  D_{2,m}'(k)+o(1)$,
uniformly over $k$,  and we have analyzed $ D_{2,m}'$
in the first part of this proof already (see \eqref{e:D2m}).
Thus, we turn to the analysis of $D_{1,m}''$.
Recall $\tau^{(n)}$  defined in \eqref{e:def:tau^n}.
By Lemma \ref{lem_var} in Subsection \ref{ss:au},
$\tau_n = \tau^{(n)}+o_P(1)$ and
$\tau^{(n)}$  is bounded away from $0$
by Assumption \ref{ass_3}, which implies that $\tau_n$
(defined in \eqref{e:def:tau_n}) is bounded away from $0$.
Next,
recall that we consider $k>m^L$ and that $n\asymp m^\theta$. This implies
\begin{align*}
	\frac{\sqrt{m}}{\sqrt{\tau_n} \,\,(m+k)} \lesssim \frac{\sqrt{m}}{m+k} \le \frac{\sqrt{m}}{(m+k)^{2/3}k^{1/3}} \le \frac{m^{-L/3+1/2}}{(m+k)^{2/3}} \lesssim  \frac{m^{-L/3+1/2+(5/3)\theta}}{(m+k)^{2/3}n^{5/3}}.
\end{align*}
Accordingly, we have
\begin{align*}
	\sup_{k>m^L}|D_{1,m}''(k)| \lesssim m^{-L/3+1/2+(5/3)\theta}  \cdot \sup_{k>m^L}\bigg|\frac{1}{(m+k)^{2/3}}\sum_{t=m+1}^{m+k}\frac{Y_t'}{n^{5/3}}\bigg|.
\end{align*}
The first factor on the right  tends to $0$, by our choice of $L$. The supremum can once more be upper bounded by
\begin{align} \label{e:Dpres2}
	\sup_{k>m^L}\bigg|\frac{1}{k^{2/3}}\sum_{t=m+1}^{m+k}\frac{Y_t'}{n^{5/3}}\bigg| \overset{d}{=}
	\sup_{k> m^L  }\bigg|\frac{1}{k^{2/3}}\sum_{t=1}^{k}\frac{Y_t'}{n^{5/3}}\bigg|
	\leq \sup_{1 \le k <\infty}\bigg|\frac{1}{k^{2/3}}\sum_{t=1}^{k}\frac{Y_t'}{n^{5/3}}\bigg|=\mathcal{O}_P(1).
\end{align}
Here, the $\mathcal{O}_P(1)$ rate follows again as in the proof of Lemma C.1 of \cite{kutta:jach:kokoszka:2024}. This approach relies on the fact that $\mathbb{E}|Y_t'/n^{5/3}|^q \le C_q<\infty$, where $C_q>0$ is fixed and only depends on $q$ (the proof of this result uses similar techniques as the proof of
Lemma \ref{lem:exp} and is omitted).  Again using the fact that
$V_m/\sqrt{\tau_n}$ converges weakly to an
a.s. positive random variable (see eq. \eqref{e:Vmas}),
we obtain
\[
\sup_{ k > m^L} \Gamma_{n,m}(k)= \sup_{ k > m^L} \frac{D_{m}''(k)}{V_m/\sqrt{\tau_n}}+o_P(1)=\sup_{k >m^L} \frac{-D_{2,m}'(k)}{V_m/\sqrt{\tau_n}}+o_P(1).
\]
As we have seen in \eqref{e:D2m}, it holds that uniformly in $k$ that $D_{2,m}'(k) = P_m(1)+o_P(1)$ and hence we get
\begin{align} \label{e:Gamlat}
	\sup_{ k > m^L} \Gamma_{n,m}(k) = \sup_{k \ge m^L} \frac{-P_m(1)}{V_m/\sqrt{\tau_n}}+o_P(1).
\end{align}
Thus, combining \eqref{e:Gamear} and \eqref{e:Gamlat}, we  have established that
\[
\sup_{k \ge m^{1+\zeta}} \Gamma_{n,m}(k) = \sup_{k \ge m^{1+\zeta}} \frac{-P_m(1)}{V_m/\sqrt{\tau_n}}+o_P(1),
\]
which proves the Lemma.

\subsection{Proof of Theorem \ref{theomain}} \label{ss:p-main}
We combine Theorem \ref{lem_3} and Lemma \ref{lem_4} for this proof.
First, we decompose
\[
\sup_{k \ge 1}\Gamma_{n,m}(k) = \max \bigg\{ \sup_{1 \le k \le m^{5/4}}\Gamma_{n,m}(k), \sup_{k> m^{5/4}}\Gamma_{n,m}(k) \bigg\}.
\]
Using Lemma \ref{lem_4}, we have
\[
\sup_{k> m^{5/4}}\Gamma_{n,m}(k) = \frac{-P_m(1)}{V_m/\sqrt{\tau_n}}+o_P(1).
\]
Next, using the definition of $\Gamma_{n,m}(k)$ in \eqref{e:def:gam} and of the partial sum process $P_m$ in \eqref{e:def:Pm}, we have
\begin{align*}
	&\sup_{1 \le k \le  m^{5/4}}\Gamma_{n,m}(k) = \sup_{\substack{x = \frac{k}{m}, \\ k = 1, \ldots, \lfloor m^{5/4} \rfloor}}
	\frac{
		P_m(1 + x) - (x + 1) P_m(1)
	}{
		(1 + x) V_m / \sqrt{\tau_n}
	} \\
	=& \sup_{0 \le x \le m^{1/4}}
	\frac{
		P_m(1 + x) - (\lfloor xm \rfloor/m + 1) P_m(1)
	}{
		(1 + \lfloor xm \rfloor/m) V_m / \sqrt{\tau_n}.
	}
\end{align*}
Now, we move to the coupling from Theorem \ref{lem_3}. Consider the coupling of $(P_m, B_m)$ created in this lemma on a probability space $(\Omega_m, \mathcal{A}_m, \mathbb{P}_m)$. We can take the infinite product of these probability spaces and call it $(\Omega, \mathcal{A}, \mathbb{P})$.
Then we can write on this probability space
\[
\sup_{0 \le x \le m^{1/4}}|P_m(x)-B_m(x)|=o_P(1),
\]
where $(P_m)_m$ is a sequence of variables, distributed as our partial sum process with training sample size $m$ and $B_m$ a sequence of standard Brownian motions.
It then follows by the above approximation that
\begin{align*}
	&\sup_{0 \le x \le m^{1/4}} \frac{(P_m(1+x)-(\lfloor xm \rfloor/m+ 1) P_m(1))}{(1+\lfloor xm \rfloor/m)V_m/\sqrt{\tau_n}}\\
	= &\sup_{0 \le x  \le m^{1/4}} \frac{B_m(1+x)-(x+1)B_m(1)}{(1+x) \cdot \int_0^1 |B_m(s)-s B_m(1)|ds} +o_P(1)=:L_{1,M}+o_P(1)
\end{align*}
and
\[
\frac{-P_m(1)}{V_m/\sqrt{\tau_n}}= \frac{-B_m(1)}{\int_0^1 |B_m(s)-s B_m(1)|ds}+o_P(1)=:L_{2,M}+o_P(1) .
\]
Here, we have used that formally
\[
V_m /\sqrt{\tau_n}= \int_0^1 |P_m(s)-\lfloor sm\rfloor/m P_m(1)|ds.
\]
Now, to finish the proof, we have to study the distribution of $\max\{L_{1,m}, L_{2,m}\}$.
To this end, it is convenient to replace $B_m$ by a fixed Brownian motion $B$, giving us $\max\{L^B_{1,m}, L^B_{2,m}\}$ (now the distribution only depends on $m$ through the supremum). Using monotone convergence of the test statistic, it follows directly that pathwise
\[
L^B_{1,m}=\sup_{0 \le x  \le m^{1/4}} \frac{B(1+x)-(x+1)B(1)}{(1+x) \cdot \int_0^1 |B(s)-sB(1)|ds} \to \sup_{0 \le x  <\infty} \frac{B(1+x)-(x+1)B(1)}{(1+x) \cdot \int_0^1 |B(s)-sB(1)|ds} =L.
\]
Moreover, we have
\[
\lim_{x \to \infty}\frac{B(1+x)-(x+1)B(1)}{(1+x) \cdot \int_0^1 |B(s)-sB(1)|ds} = \frac{-B(1)}{\int_0^1 |B(s)-sB(1)|ds} = \lim_{m \to \infty} L_{2,m}^B,
\]
which shows the desired result that
$\max\{L^B_{1,m}, L^B_{2,m}\} \overset{d}{\to} L$.

\subsection{Proof of Corollary \ref{cor_test}} \label{ss:p-cor}
We only prove the second part, since the first part
(level control under the null hypothesis) is directly implied by
Theorem \ref{theomain}. In the proof, we will use the fact that by Lemma \ref{lem_var} we have that
\[
\tau_n = \tau^{(n)}+o(1)
\]
and that
\[
0<\tau^{(n)} \lesssim n^{2\epsilon}
\]
(see \eqref{e:intau(n)}). Notice that a condition of said lemma is that data are generated under the null-hypothesis. This is always true for the training sample, where ex hypothesi there exists no detectable signal. The definition of $\tau^{(n)}$ is to be udnerstood accordingly (an infinite series for hypothetical data generated under the null).
\\
Now, we consider the case $k^* \asymp m$ and without loss of generality assume that $k^* \ge m$. All other cases work exactly analogously and focusing on this specific case merely simplifies the presentation of the result. We have that
\begin{align}
	\sup_{k \ge 1} \Gamma_{n,m}(k) \ge  \Gamma_{n,m}(2k^*) = \frac{D_m(2k^*)/\sqrt{\tau_n}}{V_m/\sqrt{\tau_n}}.
\end{align}
We first briefly discuss the denominator. From the proof of Theorem \ref{theomain}, we have seen that $V_m/\sqrt{\tau_n} $ converges to a non-degenerate, a.s. positive random variable and hence
\[
(V_m/\sqrt{\tau_n})^{-1} = \mathcal{O}_P(1)
\]
(again, notice that the distribution of $V_m$ is the same under the null hypothesis and alternative because it only depends on the training sample).
We can hence focus our analysis on the numerator $D_m(2k^*)/\sqrt{\tau_n}$ and we begin by decomposing $D_m(2k^*)$
as follows
\begin{align} \nonumber
	D_m(2k^*)=  & \frac{n^{2/3}\sqrt{m}}{ \,\,m+2k^*}
	\bigg(\sum_{t=m+1}^{m+2k^*}\lambda_t-\frac{2k^*}{m}\sum_{t=1}^m\lambda_t\bigg)\\ = &\bigg\{ \frac{n^{2/3}\sqrt{m}}{ \,\,m+2k^*}
	\bigg(\sum_{t=m+1}^{m+k^*}\{\lambda_t-2\sigma\}-\frac{2k^*}{m}\sum_{t=1}^m\{\lambda_t-2\sigma\}\bigg)\bigg\}\nonumber\\
	& + \bigg\{ \frac{n^{2/3}\sqrt{m}}{ \,\,m+2k^*}
	\bigg(\sum_{t=m+k^*+1}^{m+2k^*}\Big\{\lambda_t-\Big(s_t + \frac{\sigma^2}{s_t} \Big)\Big\}\bigg)\bigg\}\nonumber\\
	& + \bigg\{\frac{n^{2/3}\sqrt{m}}{ \,\,m+2k^*}
	\bigg(\sum_{t=m+k^*+1}^{m+2k^*}\Big(s_t + \frac{\sigma^2}{s_t} \Big)-k^*(2\sigma)\bigg)\bigg\}=:R_1+R_2+R_3.\nonumber
\end{align}
The terms $R_1, R_2, R_3$ on the right are defined in the obvious way. The study of $R_1, R_2$ is related. For $R_1$, one uses the concentration result in Lemma \ref{lem_rmt} \ref{lem_rigidity}) and hence, we focus on the more challenging term $R_2$. For this analysis, we use the following fact that will be validated at the end of this proof. For any small enough constant $\chi>0$
\begin{align} \label{e:id:knowles}
	\mathbb{P}\bigg(\Big|\lambda_t -\Big(s_t + \frac{\sigma^2}{s_t} \Big)\Big|<n^{-1/2+\chi} \Big|\forall t=m+k^*+1,\ldots,m+2k^* \bigg)=1-o(1).
\end{align}
On this event, we have that
\begin{align*}
	R_2 \le \frac{n^{2/3}\sqrt{m}}{ \,\,m+k^*} n^{-1/2+\chi}k^* \asymp m^{1/2}n^{1/6+\chi}.
\end{align*}
Using the fact that $1/\sqrt{\tau_n}$ is bounded away from $0$, we get $R_2/\sqrt{\tau_n}=\mathcal{O}_P(m^{1/2}n^{1/6+\chi})$.\\
Finally, we turn to $R_3$, the dominating term. Recall that the probability measure $\mathbb{P}^{(2)}$ (supercritical signal under the alternative hypothesis) has support inside $(\sigma,\infty)$. The support is closed and hence its leftmost endpoint $x_*$ satisfies
\begin{align}
	\label{eq_lower_bound_x_star}
	x_* > \sigma + \eta
\end{align}
for some $\eta >0.$
Note that $s_t + \sigma^2 / s_t$ denotes the (random) approximate of $\lambda_t$ in the supercritical case (Lemma \ref{lem:pizzo}), while, in the subcritical case,  $\lambda_t$ approaches $2\sigma$ (Lemma \ref{lem_rmt} \ref{lem_tw}).
Thus, $R_3$ essentially consists of the difference of these two limits, and our task is to show that they are well-separated under our assumptions. To this end, consider the function $g(x)= x + \sigma^2 / x - 2 \sigma, ~x \geq \sigma.$ Then, it holds $g(\sigma)=0$, and g is monotonically increasing for $x \geq \sigma$.  Thus, using also \eqref{eq_lower_bound_x_star}, we obtain, for some $\eta' > 0$,
\begin{align*}
	s_t + \frac{\sigma^2}{s_t} - 2 \sigma = g(s_t) \geq \eta' \quad \textnormal{almost surely}.
\end{align*}
Consequently, we get the almost sure lower bound
\begin{align*}
	R_3 \ge \frac{n^{2/3}\sqrt{m}}{ \,\,m+k^*} (k^* \eta')\asymp n^{2/3}m^{1/2}.
\end{align*}
Since $\tau_n = \tau^{(n)}+o(1)$ and $\tau^{(n)} \lesssim n^{2\epsilon}$ (see beginning of this proof), we have that almost surely
\[
n^{2/3-\epsilon}m^{1/2}\lesssim R_3/\sqrt{\tau_n}.
\]
For all sufficiently small $\epsilon$ and $\chi$ (which occurs in the bound of $R_2$), we have that with probability going to $1$
\[
R_1/\sqrt{\tau_n}+R_2/\sqrt{\tau_n}\lesssim n^{2/3-\epsilon}m^{1/2} =o( R_3/\sqrt{\tau_n}).
\]
Since $ R_3/\sqrt{\tau_n} \to \infty$ as $m \to \infty$, we obtain
\begin{align*}
	\frac{D_m(2k^*)/\sqrt{\tau_n}}{V_m/\sqrt{\tau_n}} \ge \frac{R_3/\sqrt{\tau_n}}{V_m/\sqrt{\tau_n}}+o\bigg(\frac{R_3/\sqrt{\tau_n}}{V_m/\sqrt{\tau_n}}\bigg)\overset{P}{\to} \infty.
\end{align*}
Thus, it is left to show that \eqref{e:id:knowles} holds. For this result to hold, we use the union bound and get the upper bound 
\begin{align*}
	& \mathbb{P}\bigg(\Big|\lambda_t -\Big(s_t + \frac{\sigma^2}{s_t} \Big)\Big|<n^{-1/2+\chi} \Big|\forall t=m+k^*+1,\ldots,m+2k^* \bigg)
	\\ & \leq
	k^*  \mathbb{P}\bigg(\Big|\lambda_1 -\Big(s_1 + \frac{\sigma^2}{s_1} \Big)\Big|\ge n^{-1/2+\chi} \bigg) \\ & \asymp m \mathbb{P}\bigg(\Big|\lambda_1 -\Big(s_1 + \frac{\sigma^2}{s_1} \Big)\Big|\ge n^{-1/2+\chi} \bigg) =: m \mathbb{P}_m,
\end{align*}
where we used that we study the case $k^*\asymp m$.
We claim that $m \mathbb{P}_m$ converges to $0$. To see this, define the conditional probability
\[
f_m(s_1, \mathbf{x}_1):=\mathbb{P}\bigg(\Big|\lambda_1 -\Big(s_1 + \frac{\sigma^2}{s_1} \Big)\Big|\ge n^{-1/2+\chi} \Big| (s_1, \mathbf{x}_1)\bigg)
\]
and rewrite
\begin{align*}
	& m \mathbb{P}_m  = m\int_{f_m(s_1, \mathbf{x}_1)<1/m^2}  f_m(s_1, \mathbf{x}_1) + m \int_{f_m(s_1, \mathbf{x}_1) \ge 1/m^2}  f_m(s_1, \mathbf{x}_1) \\
	\le &  1/m + m \int_{f_m(s_1, \mathbf{x}_1) \ge 1/m^2}  f_m(s_1, \mathbf{x}_1).
\end{align*}
Now suppose on the contrary that $m \mathbb{P}_m$ does not converge to $0$. If this were so, there must be a sequence of values $(s_1, \mathbf{x}_1)$ (indexed in $n$ and thus $m$), such that $f_m(s_1, \mathbf{x}_1)>1/m^2 \ge n^{-c}$ for some constant $c>0$. This, however, is at odds with the Theorem 3.4 of \cite{knowles:yin:2014} which states that for any sequence  $(s_1, \mathbf{x}_1)$ (indexed in $n$) we have faster than polynomial decay of $f_m(s_1, \mathbf{x}_1)$ eventually. Consequently  $m \mathbb{P}_m\to 0$ is established.
Thus \eqref{e:id:knowles} holds, and the proof concludes.

\subsection{Proofs of Lemmas \ref{lem_rmt} and \ref{lem:pizzo}}
\label{ss:proof_rmt}

\noindent {\sc Proof of Lemma \ref{lem_rmt}:}
For the sake of notational convenience, we omit the index $1$ throughout this proof, writing $\bfx_1 = \bfx$, $s_1 =s$ and $\bfW_1 = \bfW.$
To begin with, we want to show that the diagonalization $\diag (s, 0, \ldots, 0)$ of the rank-one matrix $s \bfx \bfx^\top$ satisfies the regularity assumptions of \cite{Lee2015}, who consider the fluctuations and eigenvalue rigidity of a Wigner matrix under an diagonal deformation. We will then use universality results from \cite{Knowles2017} to allow for general rank-one perturbations.

Note that \cite{Lee2015} consider the random matrix $\mathbf{H} = \lambda_0 \mathbf{V} + \frac{1}{\sqrt{n}} \bfW,$ where $\bfW$ is a Wigner matrix with variance $1$ and $\mathbf{V}$ is a diagonal matrix. To align this definition with our framework \eqref{eq_def_M}, we set $\lambda_0 = 1$ and $\mathbf{V}=\diag(s/\sigma,0,\ldots,0) \in \R^{n\times n}$ such that $\sigma \mathbf{H} \stackrel{\mathcal{D}}{=} \diag(s,0, \ldots, 0) + \frac{1}{\sqrt{n}} \bfW$. By noting that, if the results of \cite{Lee2015} are valid for $\mathbf{H}$, they can also applied to the scaled matrix $\sigma \mathbf{H}$, we want to show that $\mathbf{H}$ satisfies Assumptions 2.1 and 2.2 of the aforementioned work.

For this purpose, denote $\lambda = \frac{s}{\sigma}$. Then the random variable $\lambda$ has a compact support in $[0,1)$. Since $s \sim \PR^{(1)}$, we may choose $0 \leq \lambda_l \leq \lambda_r < 1 $ independent of $n$ such that the support of $\lambda$ is contained in $[\lambda_l, \lambda_r]$.
Moreover, let
\begin{align*}
	F^{\mathbf{V}} = \frac{1}{n} \delta_\lambda + \frac{n-1}{n} \delta_0
\end{align*}
denote the (random) empirical spectral distribution of $\mathbf{V}.$ Then, the weak limit of $F^{\mathbf{V}}$, the so-called limiting spectral distribution of $\mathbf{V}$, is given by the point-mass distribution $\delta_0$ in the origin almost surely. It is straightforward to see that Assumption 2.1 and (2.5) of Assumption 2.2 in \cite{Lee2015} are satisfied. In order to verify (2.6) of Assumption 2.2 in the aforementioned work, it remains to show that
\begin{align}\label{eq_aim_schnelli}
	\inf_{x\in [\lambda_l,\lambda_r]} \int \frac{1}{(v-x)^2} dF^{\mathbf{V}} (v) \geq 1+\varepsilon
\end{align}
for sufficiently large $n$ and for some $\varepsilon>0$ with probability $1$. Note that the left-hand side of \eqref{eq_aim_schnelli} might be infinite for some $x$. In this case, this choice of $x$ may be neglected for showing the condition \eqref{eq_aim_schnelli}.
Using the definition of $F^{\mathbf{V}},$ we have almost surely
\begin{align*}
	\int \frac{1}{(v-x)^2} dF^{\mathbf{V}} (v) = \frac{1}{n} \frac{1}{(x-\lambda)^2} +  \frac{n-1}{n} \frac{1}{x^2} \geq \frac{n-1}{n} \frac{1}{\lambda_r}.
\end{align*}
Since $\lambda_r < 1 $ is fixed with respect to $n$, we find some constant $\varepsilon>0$ such that $(n-1)/( n \lambda_r) \geq 1 + \varepsilon$ for all large $n$.  Thus, we conclude that \eqref{eq_aim_schnelli} holds true.

In summary, we have shown the Assumptions 2.1 and 2.2 of \cite{Lee2015} are satisfied for $\mathbf{H}.$
The eigenvalue rigidity in part \ref{lem_rigidity} follows by combining  \cite[Lemma 4.3]{Lee2015} with the universality result \cite[Theorem 12.4]{Knowles2017}.
The Tracy-Widom fluctuation in part \ref{lem_tw} follow from \cite[Theorem 2.1]{Lee2015} and the universality result \cite[Theorem 12.5]{Knowles2017}.

\medskip

\noindent{\sc Proof of Lemma \ref{lem:pizzo}:}
According to Theorem 1.1 in \cite{pizzo_et_al_2013},
it holds for any (deterministic) sequence $(\mathbf{x}^{(n)})_n$ with $\mathbf{x}^{(n)} \in \mathbb{R}^n$ a vector of unit length, and $s \in [0,\sigma)$ fixed with respect to $n$ that
\[
r_n := \lambda\Big(s  \cdot \bfx^{(n)}(\bfx^{(n)})^\top + \bfW/\sqrt{n}\Big) -\Big(s+\frac{\sigma^2}{s}\Big)\overset{\mathbb{P}}{\to}0,
\]
where $\lambda(\mathbf{A})$ denotes the largest eigenvalue of a symmetric matrix $\mathbf{A}.$
We will show that this result is actually uniform over the $\mathbf{x}^{(n)}$ in the following sense: Let $S^{(n)} \subset \mathbb{R}^n$ be the Euclidean unit sphere, then for any $c>0$
\begin{align}
	\label{z1}
	\limsup_n \sup_{\mathbf{x}^{(n)} \in S^n} \mathbb{P}(|r_n|>c)=0.
\end{align}
If this were not so, and the right side was equal to some $\eta>0$, it would immediately follow that we could select a sequence $(\mathbf{x}^{(n,\star)})_n$ such that
\[
\limsup_n\mathbb{P}\bigg(\Big|\lambda\Big(s  \cdot \bfx^{(n,\star)}(\bfx^{(n,\star)})^\top+\bfW/\sqrt{n}\Big)  -\Big(s+\frac{\sigma^2}{s} \Big)\Big|>c\bigg) \ge \eta/2,
\]
contradicting the original theorem. \\
Next, consider our setup, where both $s, \mathbf{x}$ are random. In this case, we get
\begin{align*}
	&\mathbb{P}\bigg(\Big|\lambda\Big(s  \cdot \bfx\bfx^\top+ \bfW/\sqrt{n}\Big)-\Big(s+\frac{\sigma^2}{s} \Big) \Big|>c\bigg)\\
	= &\mathbb{E}\bigg[\mathbb{P}\bigg(\Big|\lambda\Big(s  \cdot \bfx\bfx^\top + \bfW/\sqrt{n}\Big) -\Big(s+\frac{\sigma^2}{s} \Big)\Big|>c \Big| s, \mathbf{x}\bigg)\bigg]\\
	\le & \mathbb{E}\bigg[\sup_{\bfx \in S^{(n)}}\mathbb{P}\bigg(\Big|\lambda\Big(s  \cdot \bfx\bfx^\top + \bfW/\sqrt{n}\Big) -\Big(s+\frac{\sigma^2}{s} \Big)\Big|>c\Big| s \Bigg)\bigg] =: \mathbb{E}[f_n(s)].
\end{align*}
Now, for any fixed $s'<\sigma$, it follows from \eqref{z1} that $f_n(s') \to 0$. This convergence is pointwise and $f_n$ is dominated by the constant $1$-function. Accordingly, by the dominated convergence theorem, we have
\[
\lim_{n \to \infty}\mathbb{E}[f_n(s)] = \lim_{n \to \infty}\int f_n(t) d\mathbb{P}^s(t) = \int \lim_{n \to \infty}f_n(t) d\mathbb{P}^s(t)=0.
\]

\subsection{Auxiliary lemmas} \label{ss:au}
Recall the definition of the Prokhorov metric $\pi$ in
\eqref{eq_def_prokhorov_metric}.

\begin{lemma}[Theorem 2.13 \cite{huber:ronchetti:2009}] \label{lem:huber}
	Let $X,Y$ be two random variable on a separable metric space $(\mathcal{M},d)$.
	Then the following statements are equivalent:
	\begin{itemize}
		\item[i)] $\pi(X,Y)\le  \delta$.
		\item[ii)] There exists a vector $(\tilde X,\tilde Y)$,  s.t. $\mathcal{L}(\tilde X)
		= \mathcal{L}(\tilde X)$, $\mathcal{L}(\tilde Y) = \mathcal{L}(Y)$ defined on some suitable probability space, s.t.  $\mathbb{P}(d(\tilde X,\tilde Y)\le \delta)\ge 1-\delta$.
	\end{itemize}
	The implication from i) to ii) also holds without the assumption of separability.
\end{lemma}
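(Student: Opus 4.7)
The plan is to treat the two directions separately. The direction (ii)$\Rightarrow$(i) is elementary and requires no topological assumption on $\mathcal{M}$: from the set inclusion $\{\tilde X \in A\} \subseteq \{\tilde Y \in A_\delta\}\cup\{d(\tilde X,\tilde Y)>\delta\}$ and its symmetric counterpart, taking probabilities and using that the marginals of $\tilde X, \tilde Y$ are those of $X, Y$ yields both defining inequalities for $\pi(X,Y)\le \delta$. This is why the final sentence of the lemma, that separability is unnecessary in this direction, is immediate.

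The substantive content is (i)$\Rightarrow$(ii), which is Strassen's marginal theorem. My plan is to reduce to a finite combinatorial problem by discretization, solve it via max-flow/min-cut, and pass to a weak limit. For the discretization step, invoke separability of $\mathcal{M}$ to obtain, for each $\varepsilon>0$, a countable measurable partition $\{B_i\}_{i\in\mathbb{N}}$ with $\mathrm{diam}(B_i)<\varepsilon$. Truncate to finitely many cells carrying total mass $\ge 1-\varepsilon$ under both $\mu:=\mathcal{L}(X)$ and $\nu:=\mathcal{L}(Y)$, and collapse each cell to a representative point. The resulting finitely supported measures $\mu_\varepsilon,\nu_\varepsilon$ satisfy $\pi(\mu,\mu_\varepsilon)\le \varepsilon$ and $\pi(\nu,\nu_\varepsilon)\le \varepsilon$, so by the triangle inequality $\pi(\mu_\varepsilon,\nu_\varepsilon)\le \delta+2\varepsilon$.

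For the finite problem, model the situation as a bipartite flow network whose sides are the supports of $\mu_\varepsilon$ and $\nu_\varepsilon$, with source capacities $\mu_\varepsilon(\{x_i\})$, sink capacities $\nu_\varepsilon(\{y_j\})$, and an edge $x_i\to y_j$ of infinite capacity whenever $d(x_i,y_j)\le \delta+2\varepsilon$. The Prokhorov inequality $\mu_\varepsilon(A)\le \nu_\varepsilon(A_{\delta+2\varepsilon})+\delta+2\varepsilon$ for every subset $A$ of the source side is precisely the min-cut condition ensuring a feasible flow of value $1-\delta-2\varepsilon$ through the short-distance edges. Max-flow/min-cut, equivalently the measure-theoretic version of Hall's marriage theorem, then yields a coupling $\gamma_\varepsilon$ of $\mu_\varepsilon,\nu_\varepsilon$ placing mass at least $1-\delta-2\varepsilon$ on $\{d\le \delta+2\varepsilon\}$. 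This combinatorial step is the main obstacle; all of Strassen's depth sits here, and it is what genuinely uses separability (needed to discretize at all).

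Finally, let $\varepsilon_n\to 0$ and consider the resulting couplings $\gamma_n$ on $\mathcal{M}\times\mathcal{M}$. Their marginals converge weakly to $\mu$ and $\nu$, hence are tight, so $\{\gamma_n\}$ is tight on the product space by Prokhorov's theorem. Extract a weakly convergent subsequence with limit $\gamma$. Since $\{(x,y):d(x,y)\le \delta\}$ is closed, the Portmanteau lemma gives
\[
\gamma(\{d>\delta\})\le \liminf_{n\to\infty} \gamma_n(\{d>\delta+2\varepsilon_n\})\le \delta,
\]
while the marginals of $\gamma$ are $\mu$ and $\nu$. Realizing $\gamma$ as the law of a pair $(\tilde X,\tilde Y)$ on a suitable probability space delivers the coupling required by (ii).
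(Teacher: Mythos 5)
The paper offers no proof of this lemma at all: it is quoted as Theorem~2.13 of \cite{huber:ronchetti:2009} and used as a black box, so there is no internal argument to compare yours against. What you have written is the standard proof of Strassen's coupling theorem (discretize using separability, solve the finite transport problem by max-flow/min-cut, pass to a weak limit), which is indeed how the cited result is established; the easy direction (ii)$\Rightarrow$(i) is handled correctly, modulo the usual open-versus-closed $\delta$-neighbourhood technicality (the paper's $A_\varepsilon$ is the \emph{open} enlargement, so on the event $d(\tilde X,\tilde Y)=\delta$ your inclusion can fail; one fixes this by proving $\pi\le\delta'$ for every $\delta'>\delta$).

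Two steps in the hard direction need repair. First, the final display is not what Portmanteau gives: $\{d>\delta\}$ is \emph{open} and \emph{contains} $\{d>\delta+2\varepsilon_n\}$, so $\liminf_n\gamma_n(\{d>\delta+2\varepsilon_n\})$ is a lower bound for $\liminf_n\gamma_n(\{d>\delta\})$ and cannot serve as an upper bound for $\gamma(\{d>\delta\})$. The correct route is: for fixed $\eta>0$ the set $\{d\ge\delta+\eta\}$ is closed and, once $2\varepsilon_n<\eta$, is contained in $\{d>\delta+2\varepsilon_n\}$, whence $\gamma(\{d\ge\delta+\eta\})\le\limsup_n\gamma_n(\{d>\delta+2\varepsilon_n\})\le\delta$; then let $\eta\downarrow0$. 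Second, ``the marginals converge weakly, hence are tight'' requires more than separability: on a non-complete separable metric space even a convergent sequence of Borel laws need not be uniformly tight, and the direction of Prokhorov's theorem you invoke needs the space to be Polish (or the laws $\mu,\nu$ to be Radon). This matches the hypotheses under which Huber--Ronchetti actually prove the result and is harmless for the paper, which only applies the coupling direction on $\mathcal{C}([0,T])$ (Polish), but for an arbitrary separable metric space your limiting step has a gap. A minor bookkeeping point: max-flow/min-cut produces a sub-coupling of mass $1-\delta-2\varepsilon$ concentrated on the short edges, which must then be completed to a genuine coupling of $\mu_\varepsilon,\nu_\varepsilon$ before extracting the weak limit.
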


\begin{lemma}
	\label{lem_p_t}
	Consider any numbers $D_1, D_2>0$ (arbitrarily large) and any $\varepsilon>0$ (arbitrarily small) and let $A_m \asymp m^{D_1}$ with $A_m \in \mathbb{N}$ for all $m$. Then, it holds for the probability
	\[
	p_{A_m}:= \mathbb{P}\bigg( \bigcup_{t=1}^{A_m} \Lambda_t^c\bigg), \qquad \textnormal{that} \qquad p_{A_m} \lesssim m^{-D_2}.
	\]
	Here the event $\Lambda_t$ is defined in \eqref{eq_def_Lambda_t} and depends on the constant $\varepsilon$.
\end{lemma}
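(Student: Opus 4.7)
The strategy is a straightforward union bound combined with eigenvalue rigidity from Lemma \ref{lem_rmt}\ref{lem_rigidity}, where the super-polynomial concentration overpowers the polynomial number of union terms through the polynomial comparability of $n$ and $m$ from Assumption \ref{ass_2}. The lemma is applied in contexts where $\mathsf{H}_0$ holds (in particular inside the proof of Theorem \ref{lem_3}), so that Lemma \ref{lem_rmt}\ref{lem_rigidity} is indeed available for each $\lambda_t$, and stationarity from Assumption \ref{ass_3}(ii) ensures the $\lambda_t$ share a common marginal law.

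First, by the union bound and stationarity,
\[
p_{A_m} \le \sum_{t=1}^{A_m} \mathbb{P}(\Lambda_t^c) = A_m \cdot \mathbb{P}(\Lambda_1^c).
\]
Next, for any $D>0$, Lemma \ref{lem_rmt}\ref{lem_rigidity} gives an integer $N=N(\varepsilon,D)$ such that $\mathbb{P}(\Lambda_1^c) = \mathbb{P}(|\lambda_1 - 2\sigma| > n^{\varepsilon - 2/3}) \le n^{-D}$ whenever $n \ge N$. Applying Assumption \ref{ass_2}, which guarantees $n \asymp m^\theta$ for a fixed $\theta > 0$, and using $A_m \asymp m^{D_1}$, we obtain
\[
p_{A_m} \lesssim m^{D_1} \cdot n^{-D} \lesssim m^{D_1 - \theta D},
\]
for all sufficiently large $m$ (so that $n = n_m \ge N(\varepsilon,D)$).

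It remains to choose $D$ large enough that $\theta D - D_1 \ge D_2$, i.e. $D \ge (D_1+D_2)/\theta$. Since $D$ in Lemma \ref{lem_rmt}\ref{lem_rigidity} may be taken arbitrarily large (at the price of enlarging $N$, which is harmless in an asymptotic bound), this selection is always feasible and yields $p_{A_m} \lesssim m^{-D_2}$ as claimed.

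There is no real obstacle here: the whole point is that rigidity is \emph{super}-polynomial in $n$ while the number of events being bounded and the conversion factor between $n$ and $m$ are both merely polynomial, so any finite polynomial target $m^{-D_2}$ can be matched by picking $D$ appropriately large. The only bookkeeping care required is to observe that the constant $\varepsilon>0$ defining $\Lambda_t$ is fixed throughout and, crucially, independent of $D$, so that Lemma \ref{lem_rmt}\ref{lem_rigidity} may be invoked with this $\varepsilon$ and with $D$ chosen after the fact in terms of $D_1, D_2, \theta$.
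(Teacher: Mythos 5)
Your proposal is correct and follows essentially the same route as the paper: a union bound with stationarity reduces the problem to a single marginal probability, which is then controlled by the eigenvalue rigidity of Lemma \ref{lem_rmt}\ref{lem_rigidity} with $D = (D_1+D_2)/\theta$ chosen via Assumption \ref{ass_2}. No gaps.
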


\begin{proof}
	The proof is a simple consequence of Lemma \ref{lem_rmt} part \ref{lem_rigidity}  together with Assumption \ref{ass_2}, which states that $m^\theta\asymp n$. In said lemma, choose $D= (D_1+D_2)/\theta$. This implies (together with stationarity of the time series) that
	\begin{align*}
		p_{A_m} \le \sum_{t=1}^{A_m} \mathbb{P}(\Lambda_t^c) \le A_m \mathbb{P}(\Lambda_1^c) \lesssim A_m n^{-D} \asymp A_m m^{-D\theta }.
	\end{align*}
	Now, using the fact that $A_m \lesssim m^{D_1}$, we get
	\[
	A_m m^{-D\theta } \lesssim m^{D_1-D\theta } = m^{D_1-(D_1+D_2)} = m^{-D_2}.
	\]
	This concludes the proof.
\end{proof}

\begin{lemma} \label{lem:exp}
	Suppose that Assumptions \ref{ass_wigner}, \ref{ass_2} and \ref{ass_3} hold. Then, it follows that
	\[
	\mathbb{E}[\lambda_t] = \mathbb{E}[\lambda_t|\Lambda_t]+Rem,
	\]
	where the remainder satisfies $Rem = \mathcal{O}(m^{-D})$ for any $D>0$ and is independent of $t$.
\end{lemma}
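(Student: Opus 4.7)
The plan is to compute the difference $\mathbb{E}[\lambda_t]-\mathbb{E}[\lambda_t\mid\Lambda_t]$ directly and bound each resulting piece using two ingredients: the super-polynomial decay of $\mathbb{P}(\Lambda_t^c)$ from Lemma~\ref{lem_p_t}, and a crude polynomial moment bound on $\lambda_t$. By stationarity under $\mathsf{H}_0$ (Assumption~\ref{ass_3}), the distribution of $\lambda_t$ does not depend on $t$, so all bounds will automatically be uniform in $t$, which gives the stated $t$-independence of the remainder.

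The first step is the algebraic decomposition
\begin{align*}
\mathbb{E}[\lambda_t]-\mathbb{E}[\lambda_t\mid\Lambda_t]
&=\mathbb{E}[\lambda_t\mathbb{I}\{\Lambda_t\}]+\mathbb{E}[\lambda_t\mathbb{I}\{\Lambda_t^c\}]-\frac{\mathbb{E}[\lambda_t\mathbb{I}\{\Lambda_t\}]}{\mathbb{P}(\Lambda_t)}\\
&=-\mathbb{E}[\lambda_t\mathbb{I}\{\Lambda_t\}]\cdot\frac{\mathbb{P}(\Lambda_t^c)}{\mathbb{P}(\Lambda_t)}+\mathbb{E}[\lambda_t\mathbb{I}\{\Lambda_t^c\}].
\end{align*}
On $\Lambda_t$ we have $|\lambda_t|\le 2\sigma+n^{\varepsilon-2/3}\le 2\sigma+1$ for large $n$, so $|\mathbb{E}[\lambda_t\mathbb{I}\{\Lambda_t\}]|$ is bounded by a constant. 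By Lemma~\ref{lem_p_t} (applied with $A_m=1$, or simply for a single $t$), $\mathbb{P}(\Lambda_t^c)\lesssim m^{-D'}$ for any $D'>0$, and in particular $\mathbb{P}(\Lambda_t)\to 1$, so the first summand is $\mathcal{O}(m^{-D'})$ for any prescribed $D'$.

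For the second summand I would use the Cauchy--Schwarz bound
\[
\big|\mathbb{E}[\lambda_t\mathbb{I}\{\Lambda_t^c\}]\big|\le\sqrt{\mathbb{E}[\lambda_t^2]}\,\sqrt{\mathbb{P}(\Lambda_t^c)}.
\]
A crude polynomial bound on $\mathbb{E}[\lambda_t^2]$ is available from the operator-norm triangle inequality $\lambda_t\le s_t+\|\mathbf{W}_t\|/\sqrt{n}$ combined with $\|\mathbf{W}_t\|\le\|\mathbf{W}_t\|_F$, which yields $\mathbb{E}[\lambda_t^2]\lesssim n$ using the uniform moment bound on the Wigner entries. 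Since $n\asymp m^\theta$ by Assumption~\ref{ass_2}, this gives $|\mathbb{E}[\lambda_t\mathbb{I}\{\Lambda_t^c\}]|\lesssim m^{\theta/2}\sqrt{\mathbb{P}(\Lambda_t^c)}$. Now Lemma~\ref{lem_p_t} lets us choose the decay rate of $\mathbb{P}(\Lambda_t^c)$ arbitrarily fast: given any target $D>0$, pick $D'=2D+\theta$ so that $m^{\theta/2}m^{-D'/2}=m^{-D}$. Combining the two contributions completes the proof.

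The main (and essentially only) subtlety is that on $\Lambda_t^c$ the eigenvalue $\lambda_t$ is not bounded by the truncation threshold and could in principle be very large; this is precisely why the crude polynomial moment estimate, rather than just boundedness, is needed. Once this is noted, the super-polynomial smallness of $\mathbb{P}(\Lambda_t^c)$ absorbs any polynomial-in-$n$ (equivalently, polynomial-in-$m$) factor coming from the moment bound, so no sharp moment control on the extremal eigenvalue is required.
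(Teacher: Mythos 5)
Your proof is correct and follows essentially the same route as the paper's: truncation via the super-polynomial bound on $\mathbb{P}(\Lambda_t^c)$ from Lemma~\ref{lem_p_t}, Cauchy--Schwarz for the contribution on $\Lambda_t^c$, a crude polynomial moment bound on $\lambda_t$ via the Frobenius norm, and $n\asymp m^\theta$ to convert everything into powers of $m$. Your only (minor) improvement is that you write out explicitly the algebra relating $\mathbb{E}[\lambda_t\mathbb{I}\{\Lambda_t\}]$ to $\mathbb{E}[\lambda_t\mid\Lambda_t]$, which the paper dispatches with ``a similar argument.''
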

\begin{proof}
	We notice that $\lambda_t = \lambda_t\mathbb{I}\{\Lambda_t\}$ except on a set with probability $\lesssim m^{-D_2}$ (see Lemma \ref{lem_p_t} above), where $D_2$ can be chosen arbitrarily large. Thus, we get
	\[
	\mathbb{E}[\lambda_t] = \mathbb{E}[\lambda_t\mathbb{I}\{\Lambda_t\}] +\mathbb{E}[\lambda_t\mathbb{I}\{\Lambda_t^c\}].
	\]
	Let us now consider $\mathbb{E}[\lambda_t\mathbb{I}\{\Lambda_t^c\}]$, where we observe the upper bound
	\begin{align*}
		\mathbb{E}[\lambda_t\mathbb{I}\{\Lambda_t^c\}] \le \left( \mathbb{E}[\lambda_t^2] \right) ^{1/2} \mathbb{P}\{\Lambda_t^c\}^{1/2} \lesssim \left( \mathbb{E}[\lambda_t^2] \right) ^{1/2}  m^{-D_2/2}.
	\end{align*}
	Since $\lambda_t$ it the biggest eigenvalue of $\mathbf{M}_t$, it is upper bounded by 
	its spectral norm $\|\mathbf{M}_t\|_2$. We thus obtain
	\begin{align*}
		\mathbb{E}[\lambda_t^2] \le \sum_{i,j}   4\mathbb{E}[|(s \cdot \bfx\bfx^\top)_{i,j}|^2 + w_{i,j}^2] \lesssim n^2 \lesssim m^{2/\theta},
	\end{align*}
	where we used Assumption \ref{ass_2} for the last estimate.
	Thus, choosing $D_2$ large enough, we get that $\mathbb{E}[\lambda_t\mathbb{I}\{\Lambda_t^c\}]$ decays at an (arbitrarily fast) polynomial rate in $m$. Using a similar argument, we get that
	\[
	\mathbb{E}[\lambda_t\mathbb{I}\{\Lambda_t\}] = \mathbb{E}[\lambda_t|\Lambda_t]+Rem,
	\]
	where the remainder $Rem$ is decaying at an arbitrarily fast polynomial rate.
\end{proof}

We conclude this section with a Lemma on the convergence of the long-run variance.
Recall the definitions of $\tau_{k,n}$ and $\tau^{(n)}$ in \eqref{eq_def_tau} and the parameter $\varepsilon$ from \eqref{eq_def_Lambda_t}.
\begin{lemma} \label{lem_var}
	Under the conditions of Theorem \ref{lem_3}, it holds for any fixed $\eta \in (0,1)$ and for some universal constant $C=C(\eta)>0$ that
	\[
	|\tau_{k,n}-\tau^{(n)}| \le C n^{2\varepsilon} k^{-(1-\eta)}.
	\]
\end{lemma}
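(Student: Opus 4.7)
The plan is to start from the standard stationarity identity for the variance of a partial sum. Reindexing the double sum $\sum_{s,t=1}^{k}\mathbb{E}[Y_sY_t]$ by the lag $h = s-t$ gives
\[
\tau_{k,n} = \frac{1}{k}\sum_{s,t=1}^{k} \mathbb{E}[Y_s Y_t] = \sum_{|h|<k}\Big(1 - \frac{|h|}{k}\Big) \mathbb{E}[Y_h Y_0].
\]
Subtracting $\tau^{(n)} = \sum_{h \in \mathbb{Z}} \mathbb{E}[Y_h Y_0]$ yields the decomposition
\[
\tau_{k,n} - \tau^{(n)} = -\sum_{|h|<k} \frac{|h|}{k}\, \mathbb{E}[Y_h Y_0]\;-\;\sum_{|h|\ge k} \mathbb{E}[Y_h Y_0],
\]
and I would bound the two pieces separately.

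Both estimates rest on the covariance inequality for $\phi$-mixing centered variables (Lemma 3.9 in \cite{dehling:mikosch:sorensen:2002}), namely $|\mathbb{E}[Y_h Y_0]| \le 4 \phi(|h|)\, \|Y_h\|_\infty \|Y_0\|_\infty$. Combined with the deterministic bound $|Y_t| \le 3 n^\varepsilon$ already established in \eqref{e:bYt} and the exponential decay $\phi(r)\le C a^r$ with $a\in(0,1)$ from Assumption \ref{ass_3}, this yields $|\mathbb{E}[Y_h Y_0]| \lesssim n^{2\varepsilon} a^{|h|}$, with an implicit constant independent of $n$ and $k$.

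With this bound in hand, the tail sum is controlled by a geometric series,
\[
\Big|\sum_{|h|\ge k} \mathbb{E}[Y_h Y_0]\Big| \lesssim n^{2\varepsilon} \sum_{|h|\ge k} a^{|h|} \lesssim n^{2\varepsilon}\, a^k,
\]
which decays faster than any polynomial in $k$ and is therefore negligible compared to $n^{2\varepsilon}k^{-(1-\eta)}$. For the triangular-weight sum, the finiteness of $\sum_{h\in\mathbb{Z}}|h|a^{|h|}$ gives
\[
\sum_{|h|<k} \frac{|h|}{k}\, |\mathbb{E}[Y_h Y_0]| \;\lesssim\; \frac{n^{2\varepsilon}}{k}\sum_{h\in\mathbb{Z}}|h|\,a^{|h|} \;\lesssim\; \frac{n^{2\varepsilon}}{k},
\]
which is bounded by $n^{2\varepsilon} k^{-(1-\eta)}$ for any $\eta\in(0,1)$. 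Combining the two estimates and collecting constants into a single $C(\eta)>0$ yields the claim.

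Since the mixing is exponential and the summands are uniformly bounded, I do not anticipate any serious obstacle; in fact the argument actually delivers a sharper rate of order $n^{2\varepsilon}/k$, but the slightly looser statement with the free parameter $\eta$ is more than sufficient for the applications in the proof of Theorem \ref{lem_3}. The only subtle point is to keep the $n$-dependence tracked explicitly through $n^\varepsilon$ rather than absorbing it into an implicit constant, so that the lemma can be combined with the calibration of the other small parameters $\varepsilon_1,\dots,\varepsilon_K$ in that proof.
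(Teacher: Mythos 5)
Your proof is correct and follows essentially the same route as the paper: the stationarity identity $\tau_{k,n}=\sum_{|h|<k}(1-|h|/k)\mathbb{E}[Y_hY_0]$, the uniform bound $|Y_t|\lesssim n^{\varepsilon}$ from \eqref{e:bYt}, and the $\phi$-mixing covariance inequality with exponential decay. The only (harmless) difference is that the paper splits the lags at $k^{\eta}$ and uses Cauchy--Schwarz for the small lags, whereas you apply the mixing bound to all lags and exploit $\sum_h |h|a^{|h|}<\infty$, which gives the slightly sharper rate $n^{2\varepsilon}/k$ and makes the free parameter $\eta$ superfluous.
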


\begin{proof}
	Let $\eta \in (0,1)$ be some constant.  Using stationarity, we can rewrite
	\begin{align*}
		\tau_{k,n} = \sum_{|h|<k}(1-|h|/k)\mathbb{E}[Y_0Y_t] = \tau^{(n)} + \mathcal{O}\Big(  \sum_{|h|<k^\eta} (|h|/k)|\mathbb{E}[Y_0Y_t]|\Big) +  \mathcal{O}\Big(  \sum_{|h|\ge k^\eta} |\mathbb{E}[Y_0Y_t]|\Big).
	\end{align*}
	Since $\mathbb{E}Y_t^2 = \mathcal{O}(n^{2\varepsilon})$  by \eqref{e:bYt}, it follows for the second term on the right that
	\begin{align*}
		\sum_{|h|<k^{\eta}} (|h|/k)|\mathbb{E}[Y_0Y_t]| \lesssim k^{-1+\eta} n^{2\varepsilon}.
	\end{align*}
	Next, using the mixing property of the time series, we obtain with the mixing inequality from \cite{dehling:mikosch:sorensen:2002} (  (3.17)) for bounded variables that
	\[
	|\mathbb{E}[Y_0Y_t]| \lesssim n^{2\varepsilon} \phi(t).
	\]
	Since $\phi(t) \lesssim a^{t}$ for $a \in (0,1)$ (see Assumption \ref{ass_3}, part i)) we obtain for the other term
	\begin{align*}
		\sum_{|h|\ge k^\eta} |\mathbb{E}[Y_0Y_t]| \lesssim n^{2\varepsilon} \sum_{|h|\ge k^\eta} a^{t} \lesssim  n^{2\varepsilon} k^{-1+\eta}.
	\end{align*}
	This concludes the proof of Lemma \ref{lem_var}.
\end{proof}

%
%

\begin{funding}
The work of N. D{\"o}rnemann and T. Kutta was supported by the Aarhus
University Research Foundation (AUFF), project numbers
47221, 47222, 47331 and 47388. P. Kokoszka and S. Lee were supported
by United States National Science Foundation grant DMS--2412408.
\end{funding}



\bibliographystyle{imsart-number} 
\bibliography{ntB1}       


\end{document}